\documentclass[12pt]{article}
\usepackage{amssymb}
\usepackage{amsmath}
\usepackage{amsthm}
\usepackage{color}
\usepackage{comment}
\usepackage{enumitem}
\usepackage{geometry}		
\usepackage{graphicx}
\usepackage[sort&compress]{natbib}     

\makeatletter
	\@addtoreset{equation}{section}
\makeatother

\let\originalleft\left
\let\originalright\right
\renewcommand{\left}{\mathopen{}\mathclose\bgroup\originalleft}
\renewcommand{\right}{\aftergroup\egroup\originalright}

\begin{document}

\newcommand\cP{\mathcal{P}}
\newcommand\cS{\mathcal{S}}
\newcommand{\rD}{{\rm D}}
\newcommand{\re}{{\rm e}}
\newcommand{\ri}{{\rm i}}
\newcommand{\ee}{\varepsilon}

\newtheorem{theorem}{Theorem}[section]
\newtheorem{corollary}[theorem]{Corollary}
\newtheorem{lemma}[theorem]{Lemma}
\newtheorem{proposition}[theorem]{Proposition}
\newtheorem{conjecture}[theorem]{Conjecture}
\newtheorem{algorithm}[theorem]{Algorithm}

\theoremstyle{definition}
\newtheorem{definition}{Definition}[section]
\newtheorem{example}[definition]{Example}

\theoremstyle{remark}
\newtheorem{remark}{Remark}[section]

\title{
One-dimensional first return maps for the two-dimensional border-collision normal form with a zero determinant.
}
\author{
D.J.W.~Simpson\\\\
School of Mathematical and Computational Sciences\\
Massey University\\
Palmerston North, 4410\\
New Zealand
}

\maketitle



\begin{abstract}

The two-dimensional border-collision normal form is a four-parameter family of continuous, piecewise-linear maps. When this form has a zero determinant, all of its nonlinear dynamics are captured a one-dimensional first return map. The first return map is discontinuous and piecewise-linear, where each piece of the map corresponds to a constant return time. We show that when the normal form has a repelling focus fixed point, the configuration of the first return map is dictated by a rational rotation number whereby the set of return times and ordering of the pieces of the map are given by the denominators of the left and right sequences of Farey parents of this number. The result is proved by characterising polygons formed from preimages of the switching manifold, and employing an inductive argument on the Farey web. Several surfaces in parameter space where the configuration changes are bifurcations from chaotic to quasiperiodic or mode-locked dynamics.

\end{abstract}

\section{Introduction}
\label{sec:intro}

Maps ${\bf x} \mapsto f({\bf x})$ describe how the state {\bf x}
of a dynamical system evolves in discrete time steps.
The long-term behaviour of a map is governed by its attractors,
and, as parameters are varied, attractors undergo qualitative changes
at critical parameter values, termed bifurcations.

Piecewise-smooth maps exhibit {\em border-collision bifurcations} (BCBs)
that occur when a fixed point hits a switching manifold, where the map is nonsmooth \cite{DiBu08}.
The change in dynamics brought about by a BCB is almost limitless:
a stable fixed point can change to a chaotic attractor \cite{NuYo92},
of any dimension \cite{Gl15b}, or to any number of coexisting chaotic attractors \cite{Si24c}.

If a map is continuous and piecewise-differentiable at a BCB,
then the local dynamics are usually captured by a piecewise-linear approximation to the map \cite{Si16}.
This approximation is formed by truncating each piece of the map to first order.
If the approximation satisfies a genericity condition (observability),
then it can be converted via a change of coordinates to the border-collision normal form \cite{Di03}.
In two dimensions, this form is
\begin{equation}
\begin{bmatrix} x \\ y \end{bmatrix} \mapsto
\begin{cases}
\begin{bmatrix} \tau_L x + y + \mu \\ -\delta_L x \end{bmatrix}, & x \le 0, \\
\begin{bmatrix} \tau_R x + y + \mu \\ -\delta_R x \end{bmatrix}, & x \ge 0,
\end{cases}
\label{eq:bcnf}
\end{equation}
which has four parameters, $\tau_L, \delta_L, \tau_R, \delta_R \in \mathbb{R}$,
in addition to the border-collision parameter $\mu \in \mathbb{R}$.
The cases $\mu < 0$ and $\mu > 0$ correspond to different sides of the BCB.
All attractors and bounded invariant sets of \eqref{eq:bcnf} scale linearly with $|\mu|$,
thus to understand the dynamics associated with the BCB,
it suffices to consider $\mu = -1$ and $\mu = 1$.

This paper concerns the case that one of the determinants $\delta_L$ or $\delta_R$ is zero,
and without loss of generality we take $\delta_L = 0$.
From an applied perspective, this situation is generic
because BCBs corresponding to grazing-sliding bifurcations of Filippov systems
always involve a zero determinant \cite{DiKo03}.
This occurs because any piece of a Poincar\'e map corresponding to trajectories with sliding segments,
return to a codimension-one subset of the Poincar\'e section.
Grazing-sliding bifurcations are ubiquituous in
low-dimensional models of mechanical systems with stick-slip friction
\cite{CsSt06,GuHo10,KoPi08,LuGe06,YoSu00},
and ecological systems with threshold control \cite{HaAr21,ZhTa22}.

For \eqref{eq:bcnf} with a zero determinant,
Parui and Banerjee \cite{PaBa02} systematically catalogued the BCBs.
They achieved this by performing extensive numerical explorations to
determine what attractors are possible on each side of the BCB
for every possible combination of stability types for the two fixed points.
In particular, this work highlighted the complexity of the possible
dynamics despite the presence of a zero determinant.

To understand how complex dynamics arises,
our previous work \cite{Si25f} catalogued the bifurcation structures occurring for fixed $\mu \ne 0$.
It was found that with $\delta_L = 0$ and $\mu = -1$,
\eqref{eq:bcnf} behaves similarly to the one-dimensional setting 
whereby chaos and period-incrementing structures dominate \cite{ItTa79,NuYo95,SuAv16},
but with two dimensions multiple attractors are possible. 
With instead $\delta_L = 0$ and $\mu = 1$, the dynamics of \eqref{eq:bcnf} can be extraordinarily rich.
In this case, \eqref{eq:bcnf} reduces to
\begin{equation}
\begin{bmatrix} x \\ y \end{bmatrix} \mapsto f(x,y) =
\begin{cases}
\begin{bmatrix} \tau_L x + y + 1 \\ 0 \end{bmatrix}, & x \le 0, \\
\begin{bmatrix} \tau_R x + y + 1 \\ -\delta_R x \end{bmatrix}, & x \ge 0,
\end{cases}
\label{eq:f}
\end{equation}
and Fig.~\ref{fig:bifSet} shows a sample two-dimensional slice of parameter space.
The figure shows robust chaos (orange),
coexisting attractors (speckled),
and the sausage-string structure of periodicity regions,
studied by Szalai and Osinga \cite{SzOs09}.
A different structure for the periodicity regions
occurs in the left area of the figure, and this structure remains to be properly understood.

\begin{figure}[t!]
\begin{center}
\includegraphics[width=15cm]{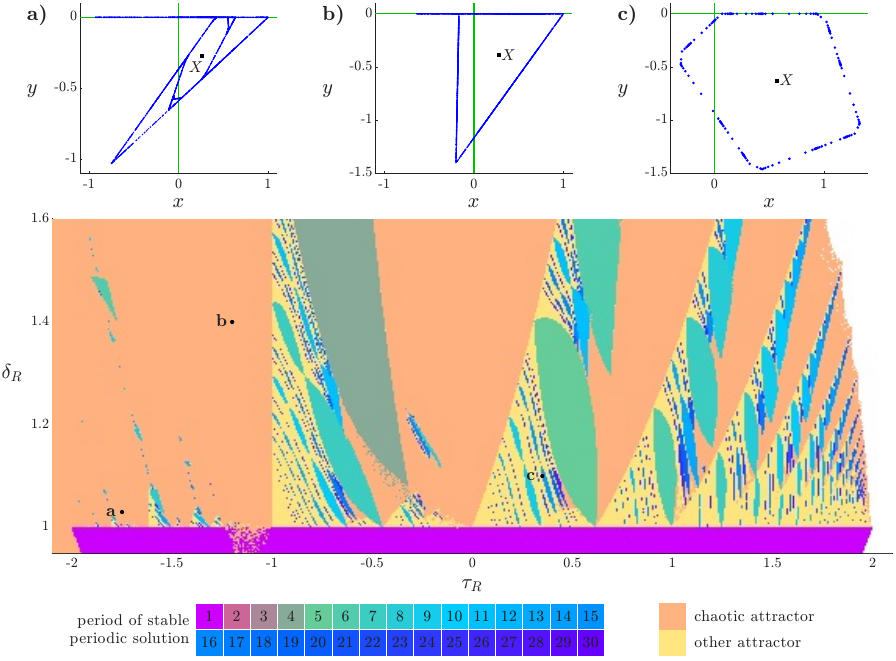}
\caption{
A two-parameter bifurcation diagram and sample phase portraits of
\eqref{eq:f} with $\tau_L = -1.2$.
Each point in a $2000 \times 1000$ grid of $(\tau_R,\delta_R)$-values
is coloured according to the long-term behaviour of the forward orbit of a random initial point.
If the orbit appeared to converge to a periodic solution with period less than or equal to $30$,
the point is coloured by the period as indicated in the colour bar.
Otherwise the point is coloured white if the orbit appeared to diverge,
orange if a numerically computed maximal Lyapunov exponent is greater than $0.001$,
and yellow otherwise.
The phase portraits use
$(\tau_R,\delta_R) = (-1.75,1.03)$ in {\bf a},
$(\tau_R,\delta_R) = (-1.2,1.4)$ in {\bf b}, and
$(\tau_R,\delta_R) = (0.35,1.1)$ in {\bf c}.
In these plots the switching manifold (the $y$-axis, $\Sigma_0$) and its image (the $x$-axis, $\Sigma_1$)
are indicated with green lines.
The black square is the repelling fixed point $X$,
while the blue dots indicate the attractor.
The attractor appears to be chaotic at parameter points {\bf a} and {\bf b},
and periodic with period $149$ at parameter point {\bf c}.
\label{fig:bifSet}
} 
\end{center}
\end{figure}

The left piece of \eqref{eq:f} maps all points to the $x$-axis,
\begin{equation}
\Sigma_1 = \left\{ (x,y) \in \mathbb{R}^2 \,\middle|\, y = 0 \right\}.
\label{eq:Sigma1}
\end{equation}
Any orbit of \eqref{eq:f} that does not become trapped in the right-half plane visits $\Sigma_1$ repeatedly,
thus all invariant sets with nonlinear dynamics
are captured by the induced map defined by first return to $\Sigma_1$.
This map is one-dimensional,
so we anticipate that one-dimensional techniques \cite{DeVa93,Ru17}
can be used to obtain a rigorous and detailed explanation
for the attractors and bifurcation structures of \eqref{eq:f}.
Indeed Kowalczyk \cite{Ko05} has already used the return map
to demonstrate the existence of robustly chaotic attractors,
but to apply it further we first need to understand the configuration of the map,
as this differs substantially across parameter space.

\begin{figure}[t!]
\begin{center}
\includegraphics[width=15cm]{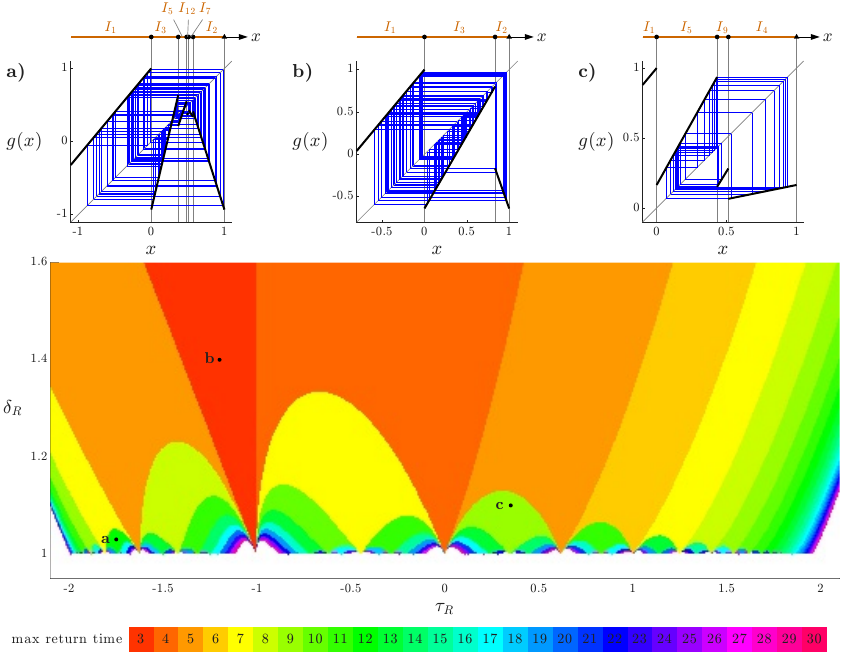}
\caption{
The lower plot indicates the maximum number of iterations (return time)
required for orbits of \eqref{eq:f} with
$\delta_R > 1$ and $\delta_R > \frac{\tau_R^2}{4}$ to return to the $x$-axis, $\Sigma_1$.
The upper plots show the first return map at the parameter points of Fig.~\ref{fig:bifSet},
also indicating the intervals $I_n$ on which this map is affine.
In each plot, the blue cobweb corresponds to the attractor of the return map.
\label{fig:numPieces}
} 
\end{center}
\end{figure}

The upper plots in Fig.~\ref{fig:numPieces}
are cobweb diagrams of the first return map
at the parameter points {\bf a}, {\bf b}, and {\bf c} of Fig.~\ref{fig:bifSet}.
The return map is discontinuous
and affine on a finite set of intervals $I_n$,
corresponding to fixed return times $n \ge 1$.
The maximum return time is $12$, $3$, and $9$,
at parameter points {\bf a}, {\bf b}, and {\bf c}, respectively,
and Fig.~\ref{fig:numPieces} also shows how the maximum return time
varies with $\tau_R$ and $\delta_R$.

The purpose of this paper is to
obtain a concise characterisation for the configuration of the first return map.
Our results are stated in \S\ref{sec:mainResults} in the form of two theorems.
Theorem \ref{th:existence} shows there exists an irreducible fraction $\frac{m}{p}$,
such that the maximum return time is $p$
and that the other return times and ordering of the intervals $I_n$ are given
by the denominators of the left and right sequences of Farey parents of $\frac{m}{p}$.
Theorem \ref{th:construct} shows how the intervals $I_n$ can be obtained constructively.
The theorems apply when right piece of \eqref{eq:f} has a repelling focus fixed point,
which occurs when $\delta_R > 1$ and $\delta_R > \frac{\tau_R^2}{4}$,
and covers the most dynamically interesting parts of parameter space.

In \S\ref{sec:Farey} we review the Farey organisation of rational numbers.
We define Farey neighbours, Farey parents, and the Farey web,
and relate irreducible fractions to permutations.

Theorems \ref{th:existence} and \ref{th:construct} are proved in \S\ref{sec:arguments}.
This is achieved in several steps.
We first study polygons $E_r$ formed from preimages of the switching manifold of \eqref{eq:f}.
Each $E_r$ consists of all points in the right half-plane
that require exactly $r$ iterations under \eqref{eq:f} to enter the closed left half-plane.
Then each $I_n$ is formed by intersecting $E_{n-1}$ with $\Sigma_1$ at points with $x \le 1$.
Our constructions determine the values of $n$ for which $I_n$ is non-empty,
and the way in which the non-empty intervals are ordered.
At its core, this requires a proof by induction, taking steps down the Farey web.

Additional results are presented in \S\ref{sec:more}.
We use the geometric constructions to verify a statement of Szalai and Osinga \cite{SzOs08}
related to the polygons $F_r = \mathbb{R}^2 \setminus (E_0 \cup E_1 \cup \cdots \cup E_{r-1})$.
We then derive an explicit formula for the boundaries
in Fig.~\ref{fig:numPieces} where there is change to the maximum return time
and configuration of $g$.
We also describe how these boundaries can coincide with bifurcations
where the attractor changes qualitatively.
Section \ref{sec:conc} provides concluding remarks and an outlook for future studies.
Throughout the paper we use parameter points {\bf a}, {\bf b}, and {\bf c}
of Fig.~\ref{fig:bifSet} to illustrate the results.

\section{Main results}
\label{sec:mainResults}

This section presents the main results.
We write
\begin{align}
\Omega_L &= \left\{ (x,y) \in \mathbb{R}^2 \,\middle|\, x < 0 \right\}, \\
\Omega_R &= \left\{ (x,y) \in \mathbb{R}^2 \,\middle|\, x > 0 \right\},
\label{eq:OmegaLR}
\end{align}
for the open left and right half-planes,
and $\overline{\Omega}_L$ and $\overline{\Omega}_R$
for the closed left and right half-planes.
We also write $P = (P_1,P_2)$ to indicate the components of a point $P \in \mathbb{R}^2$.

\subsection{First return}
\label{sub:return}

For any $\tau_L, \tau_R, \delta_R \in \mathbb{R}$,
the map $f$, given by \eqref{eq:f}, maps every point in $\overline{\Omega}_L$ to
the $x$-axis $\Sigma_1$.
The first return time $N$ and map $g$ are defined as follows.

\begin{definition}
Given $x \in \mathbb{R}$, the {\em return time} $N(x)$
is the smallest $n \ge 1$ for which $f^n(x,0)_2 = 0$, if such $n$ exists.
If $n = N(x)$ exists, the {\em first return map} has value
\begin{equation}
g(x) = f^n(x,0)_1 \,,
\label{eq:g1}
\end{equation}
otherwise $g(x)$ is undefined.
\label{df:firstReturnMap}
\end{definition}

If $\delta_R \ne 0$ and $g(x)$ is undefined, then the forward orbit of $(x,0)$ remains in $\Omega_R$.
Since $f$ is affine on $\Omega_R$, the behaviour of such orbits
is easily determined from the values of $\tau_R$ and $\delta_R$.
For example, if $|\tau_R| - 1 < \delta_R < 1$,
the orbits converge to a fixed point in $\Omega_R$.

The following result shows that, throughout the parameter regimes we wish to consider,
$g$ only takes values in $(-\infty,1]$.
This allows us to restrict our analysis of $g$ to $(-\infty,1]$, providing simplification.
This result is proved at the end of this section.

\begin{lemma}
Let $\tau_L \ge 0$, $\tau_R \in \mathbb{R}$, and $\delta_R > 0$.
Then $g(x) \le 1$ for all $x \in \mathbb{R}$ for which $g(x)$ is defined.
\label{le:xPrimeLe1}
\end{lemma}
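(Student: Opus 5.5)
The plan is to look at the last step of the orbit before it returns, and to show that the point it comes from lies in the closed left half-plane and has a non-positive second component. The bound then follows directly from the left piece of \eqref{eq:f}.

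Fix $x$ with $g(x)$ defined, let $n = N(x)$, and write $(u,v) = f^{n-1}(x,0)$. Then $g(x) = f(u,v)_1$ and $f(u,v)_2 = 0$. We first show $u \le 0$. If $u > 0$, the right piece gives $f(u,v)_2 = -\delta_R u$, which is nonzero because $\delta_R > 0$. This contradicts $f(u,v)_2 = 0$, so $u \le 0$. The two pieces of \eqref{eq:f} agree on the first component at $u = 0$, so in every case
\[
g(x) = \tau_L u + v + 1 .
\]
Since $\tau_L \ge 0$ and $u \le 0$, we have $\tau_L u \le 0$. It remains to show $v \le 0$.

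If $n = 1$, then $(u,v) = (x,0)$, so $v = 0$. If $n \ge 2$, write $(w,z) = f^{n-2}(x,0)$, so that $(u,v) = f(w,z)$. Suppose $w \le 0$. Then $v = f(w,z)_2 = 0$, whether we use the left piece or (when $w = 0$) the right piece. But then $f^{n-1}(x,0)_2 = 0$ with $n - 1 \ge 1$, contradicting the minimality of $N(x)$. Hence $w > 0$, and $v = -\delta_R w < 0$.

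In all cases $v \le 0$, and therefore $g(x) \le 1$. The only point needing care is this minimality argument for $v$ together with the boundary case $u = 0$, where both pieces of \eqref{eq:f} coincide; everything else is immediate from the form of the map.
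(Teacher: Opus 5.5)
Your proof is correct and takes essentially the same route as the paper. The paper splits on $x \le 0$ versus $x > 0$ and uses that the last point before return lies in $\overline{\Omega}_L$ and the point before it in $\Omega_R$, so the last point has non-positive first component and negative second component; you split on $n = 1$ versus $n \ge 2$ and derive those two half-plane memberships explicitly from $\delta_R > 0$ and the minimality of $N(x)$, which the paper takes for granted, but the key idea is identical.
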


For all $n \ge 1$, let
\begin{equation}
I_n = \left\{ x \in (-\infty,1] \,\big|\, N(x) = n \right\},
\label{eq:In}
\end{equation}
be the set of all $x$-values that yield a return time of $n$.
The next result, proved at the end of this section,
shows that each non-empty $I_n$ is either an interval or the singleton $\{ 1 \}$.

\begin{lemma}
Let $\tau_L, \tau_R, \delta_R \in \mathbb{R}$ and $n \ge 1$.
If $I_n \ne \varnothing$, and $I_n \ne \{ 1 \}$, then $I_n$ is an interval. 
\label{le:InInterval}
\end{lemma}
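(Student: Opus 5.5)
The plan is to write $N(x)$ as the solution of finitely many affine inequalities in $x$. Then $I_n$ is an intersection of half-lines, hence convex, and the only way it can degenerate to a point is for two closed endpoints to coincide, which can only happen at $x = 1$.

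\emph{Step 1: reduce to sign conditions.} Write $P^{(k)} = f^k(x,0)$. From \eqref{eq:f}, the second component of $f(P)$ is $0$ if $P_1 \le 0$ and is $-\delta_R P_1$ if $P_1 \ge 0$.
\begin{itemize}
\item If $\delta_R = 0$, every point is mapped to $\Sigma_1$. So $N \equiv 1$ and $I_1 = (-\infty,1]$, while $I_n = \varnothing$ for $n \ge 2$; the claim is trivial.
\item If $\delta_R \ne 0$, then $f(P)_2 = 0$ if and only if $P_1 \le 0$. Hence $N(x)$ is the smallest $n \ge 1$ with $P^{(n-1)}_1 \le 0$.
\end{itemize}
So for $\delta_R \ne 0$ we get $N(x) = n$ exactly when $P^{(k)}_1 > 0$ for $0 \le k \le n-2$ and $P^{(n-1)}_1 \le 0$. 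In particular $I_1 = (-\infty,0]$, which is an interval. For $n \ge 2$ the condition $P^{(0)}_1 = x > 0$ is among the requirements, so $I_n \subset (0,1]$.

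\emph{Step 2: the relevant iterates are affine in $x$.} As long as $P^{(0)}, \ldots, P^{(k-1)}$ all lie in $\Omega_R$, the iterate $P^{(k)}$ is obtained by applying the right affine piece $f_R(P) = A_R P + (1,0)^{\sf T}$ $k$ times. Thus $P^{(k)} = f_R^k(x,0)$, and $h_k(x) := f_R^k(x,0)_1$ is an affine function of $x$ (it may be constant).

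The conditions in Step 1 are checked in order, so for every $x$ satisfying the earlier strict inequalities the iterates really are given by $f_R$. Therefore
\begin{equation*}
I_n = (0,1] \cap \bigcap_{k=1}^{n-2} \{ x \mid h_k(x) > 0 \} \cap \{ x \mid h_{n-1}(x) \le 0 \}, \qquad n \ge 2 .
\end{equation*}
Each set $\{h_k > 0\}$ is an open half-line, $\emptyset$, or $\mathbb{R}$. The set $\{h_{n-1} \le 0\}$ is a closed half-line, $\emptyset$, or $\mathbb{R}$. An intersection of convex subsets of $\mathbb{R}$ is convex, so $I_n$ is convex.

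\emph{Step 3: rule out degenerate singletons.} This step needs some care, because a convex set could be a single point. Among the sets intersected above, the only closed endpoints are:
\begin{itemize}
\item the right endpoint $1$ of $(0,1]$;
\item the finite endpoint $c$ of $\{h_{n-1} \le 0\}$, when $h_{n-1}$ is non-constant.
\end{itemize}
All other finite endpoints are excluded from their sets.

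Suppose $I_n = \{x_0\}$. A nonempty convex set formed this way reduces to a point only when its infimum and supremum coincide and are both attained. The supremum is attained only if it is a closed right endpoint, and the infimum only if it is a closed left endpoint. Of the sets above, $(0,1]$ has only a closed right end, and $\{h_{n-1} \le 0\}$ has a single closed end. So we need $x_0 = 1$ as the closed right end, together with $\{h_{n-1} \le 0\} = [1,\infty)$ as the closed left end. Hence $I_n = \{1\}$.

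In all other nonempty cases $I_n$ has nonempty interior, so it is a nondegenerate interval. This proves Lemma~\ref{le:InInterval}.
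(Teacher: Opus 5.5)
Your proof is correct, but it is organised differently from the paper's. The paper never writes $I_n$ down explicitly.
\begin{itemize}
\item It proves convexity by a two-point argument: if $w_1,w_2\in I_n$, then by induction the iterates $0,\dots,n-2$ of $((1-s)w_1+sw_2,0)$ lie in $\Omega_R$. Each iterate is the corresponding convex combination of the two orbits, so the $(n-1)$-st iterate lies in $\overline{\Omega}_L$.
\item It rules out singletons by a local perturbation: a short segment about $w_1$ and its first $n-2$ images lie in $\Omega_R$. Its $(n-1)$-st image is a segment centred at a point of $\overline{\Omega}_L$, so one of its ends also lies in $\overline{\Omega}_L$.
\end{itemize}
You instead derive the global description $I_n=(0,1]\cap\bigcap_{k=1}^{n-2}\{h_k>0\}\cap\{h_{n-1}\le 0\}$ and read both convexity and non-degeneracy off the endpoint types.

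The ingredients are the same: $f_R$ is affine along orbits that stay in $\Omega_R$, the strict conditions are open, and there is a single non-strict condition. Your bookkeeping has some advantages.
\begin{itemize}
\item It makes the special role of $x=1$ transparent. The only possible closed left endpoint comes from $\{h_{n-1}\le 0\}$, so a degenerate intersection must use the cap $x\le 1$ as its closed right end.
\item It automatically covers details the perturbation argument passes over, such as choosing $\varepsilon$ small enough that $w_1+\varepsilon\le 1$, or $h_{n-1}$ being constant.
\item In the focus case, $h_{n-1}(x)=\alpha_{n-1}x+\beta_{n-1}$ by Lemma \ref{le:alphabeta}. Your description then shows directly that any closed endpoint of $I_n$ other than $1$ is $z_n$, which is exactly how the intervals arise in Theorem \ref{th:construct}.
\end{itemize}
The paper's version, in exchange, is shorter and purely local: it needs nothing beyond membership tests for individual points and segments.
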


\begin{figure}[b!]
\begin{center}
\includegraphics[width=15cm]{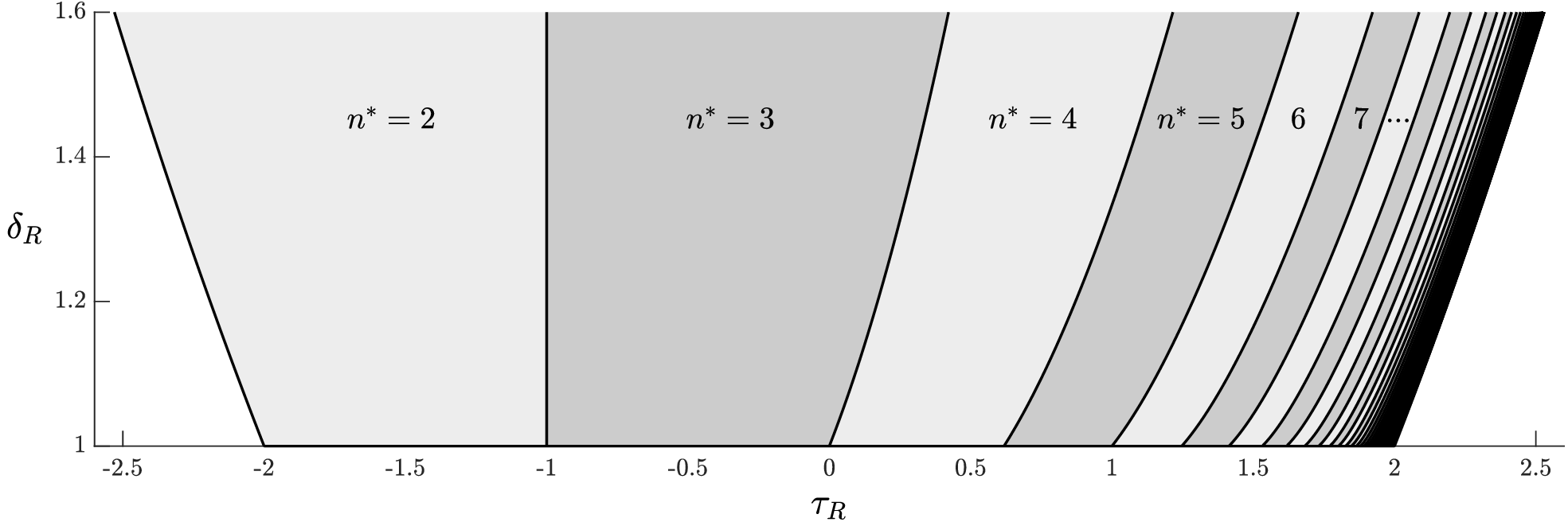}
\caption{
Regions where the return time $n^*$ of $x=1$ is constant. 
\label{fig:nStarDivision}
} 
\end{center}
\end{figure}

The return time for $x=1$,
\begin{equation}
n^* = N(1),
\label{eq:nStar}
\end{equation}
has particular significance, as will become clear below.
Fig.~\ref{fig:nStarDivision} shows how the value of $n^*$ varies with $\tau_R$ and $\delta_R$.
For every $k \ge 2$, the boundary of neighbouring regions $n^* = k$ and $n^* = k+1$
is where the point $(x,y) = (1,0)$
maps under $k-1$ iterations of $f$ to the switching manifold of $f$ (the $y$-axis).

\subsection{The existence of a rational rotation number}
\label{sub:existence}

\begin{figure}[b!]
\begin{center}
\includegraphics[width=15cm]{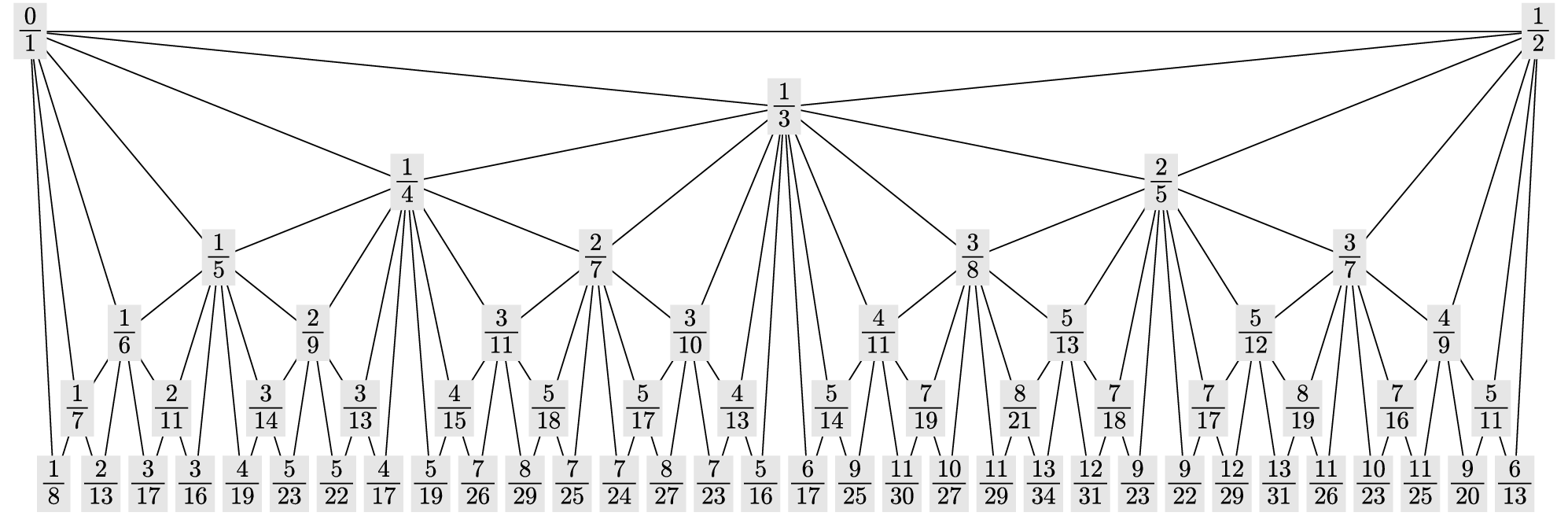}
\caption{
The Farey web for rational numbers in $\left[ 0, \frac{1}{2} \right]$ up to level six.
Level zero consists of $\frac{0}{1}$ and $\frac{1}{2}$.
For each $\ell \ge 1$, level $\ell$ is constructed by evaluating the Farey sum
on all Farey neighbours that have a value in level $\ell-1$.
\label{fig:sternBrocot}
} 
\end{center}
\end{figure}

The {\em Farey web} is indicated in Fig.~\ref{fig:sternBrocot}
for irreducible fractions $\frac{m}{p} \in \left[ 0, \frac{1}{2} \right]$.
In short, the Farey web can be generated by iteratively applying the Farey sum
$\frac{m^- + m^+}{p^- + p^+}$ to {\em neighbouring} values $\frac{m^-}{p^-}$ and $\frac{m^+}{p^+}$. 
A full definition is provided in \S\ref{sub:web}.

Given irreducible $\frac{m}{p} \in \left( 0, \frac{1}{n^*} \right)$,
consider the path in the Farey web from $\frac{m}{p}$ upwards to $\frac{0}{1}$,
and the path from $\frac{m}{p}$ upwards to $\frac{1}{n^*}$.
Order the numbers in these paths numerically,
and let $D[m,p;n^*]$ denote their denominators. 
For example, with $n^* = 2$ and $\frac{m}{p} = \frac{5}{12}$,
the paths yield $\frac{0}{1} < \frac{1}{3} < \frac{2}{5} < \frac{5}{12} < \frac{3}{7} < \frac{1}{2}$,
so $D[5,12;2] = (1,3,5,12,7,2)$.
The list $D[m,p;n^*]$ is defined more precisely in \S\ref{sub:web}.

\begin{theorem}
Let $\tau_L, \tau_R, \delta_R \in \mathbb{R}$, with $\delta_R > 1$ and $\delta_R > \frac{\tau_R^2}{4}$.
There exists a unique, $\tau_L$-independent, irreducible fraction $\frac{m}{p} \in \left[ \frac{1}{n^*+1}, \frac{1}{n^*} \right)$
such that for all $n \ge 1$ the interval $I_n$ is non-empty if and only if $n \in D[m,p;n^*]$.
The non-empty $I_n$ cover $(-\infty,1]$, and are ordered by $D[m,p;n^*]$.
\label{th:existence}
\end{theorem}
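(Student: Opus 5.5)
The plan is to reduce everything to the dynamics of the right piece $f_R(x,y) = (\tau_R x + y + 1, -\delta_R x)$ acting on $\overline{\Omega}_R$, and to work with the polygons $E_r$. Here $E_r$ is the set of points of $\overline{\Omega}_R$ needing exactly $r$ iterations to enter $\overline{\Omega}_L$. A point $(x,0)$ with $x \in (0,1]$ has return time $n$ precisely when $(x,0) \in E_{n-1}$. Every $x \le 0$ has $N(x)=1$, because $f(x,0) \in \Sigma_1$. So $I_n$ is obtained by intersecting $E_{n-1}$ with $\Sigma_1 \cap \{x \le 1\}$, with the convention that $I_1$ also contains $(-\infty,0]$.

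Since $\delta_R > \tau_R^2/4$, the linear part of $f_R$ has complex eigenvalues. Since $\delta_R > 1$, it is a rotation-by-angle-$\theta$ composed with expansion about the fixed point $X$, where $\theta \in (0,\pi)$ is determined by $\tau_R, \delta_R$ alone. This already makes the fraction $\tau_L$-independent: the sets $E_r \cap \Sigma_1$ depend only on $f_R$ and the switching line $\Sigma_0$. The preimages $f_R^{-k}(\Sigma_0)$ are lines through images of a common structure, rotated successively by $-\theta$ about $X$. Each $E_r$ is therefore a convex polygon, or a wedge, bounded by $f_R^{-r}(\Sigma_0)$, $f_R^{-(r-1)}(\Sigma_0)$ and $\Sigma_0$-related edges.

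The first step is to characterise these polygons explicitly. I would show that, for the relevant parameters, $E_r$ is the region between consecutive preimage lines, restricted to the half-planes where the earlier iterates stay in $\Omega_R$. The segment $\Sigma_1 \cap [0,1]$ then meets a finite set of them. The order in which it meets them is governed by how the successive rotations by $\theta$ of the line $\Sigma_0$ sweep past the segment. Lemma~\ref{le:InInterval} guarantees each nonempty $I_n$ is an interval, so only the ordering and the index set need to be determined. The endpoint $x=1$ lies in $I_{n^*}$, which is why $\frac{1}{n^*}$ appears as the right Farey ancestor.

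The fraction $\frac{m}{p}$ should be defined as the rational with smallest denominator in an interval determined by the geometry. Concretely, I would define it via the combinatorics of the segment: $p$ is the maximal return time on $(0,1]$, and $m$ counts how many times the orbit of the corresponding point winds around $X$ before reaching $\Sigma_0$. I would then check that $\frac{m}{p} \in \left[\frac{1}{n^*+1},\frac{1}{n^*}\right)$ using the definition of $n^*$.

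The core step is an induction on the Farey web. For $n^*$ fixed, the base cases are $\frac{m}{p} = \frac{1}{n^*+1}$ and the parameter region adjacent to it. For the inductive step, suppose the configuration $D[m^-,p^-;n^*]$ holds on one side and $D[m^+,p^+;n^*]$ on the other, with $\frac{m^-}{p^-}$ and $\frac{m^+}{p^+}$ Farey neighbours. I would show that crossing the boundary surface where a corner of some $E_r$ hits $\Sigma_1$ inserts exactly one new interval $I_{p^-+p^+}$ between $I_{p^-}$ and $I_{p^+}$. The intended mechanism is that a point returning after $p^-$ steps lands just past a point returning after $p^+$ steps, so their concatenation gives return time $p^-+p^+$. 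This is the step I expect to be the main obstacle.

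To carry it out, I would use the permutation description of irreducible fractions from \S\ref{sec:Farey}, namely the order in which rotation by $\frac{m}{p}$ visits residues. With it I would prove that the cyclic order of the vertices of the polygons $E_r$ near $\Sigma_1$ matches that permutation. Convexity and monotone rotation by $\theta$ should then rule out any other insertions. Uniqueness of $\frac{m}{p}$ follows because $D[m,p;n^*]$ determines $p$ as its maximum, and determines $m$ through its two neighbours in the list via the Farey relation $m p^- - m^- p = 1$. Finally, the covering of $(-\infty,1]$ follows from the absence of points trapped in $\Omega_R$: since $\delta_R>1$ the focus is repelling, so every orbit starting on $\Sigma_1$ leaves $\Omega_R$ within a bounded number of steps, at most a quarter-turn count times $\pi/\theta$.
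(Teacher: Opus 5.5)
Your proposal leaves the decisive step unproven, and the framework you put around it would not close it. You run the Farey induction in parameter space: fix $n^*$, start in a base region, and argue that crossing a surface where ``a corner of some $E_r$ hits $\Sigma_1$'' inserts exactly one interval $I_{p^-+p^+}$. To make that a proof you would need global facts you never establish:
\begin{itemize}
\item every parameter with the given $n^*$ is reached from the base region along a path meeting only finitely many such surfaces;
\item the only events along the way are the specific vertices at the end of the current Farey path crossing $\Sigma_1$, never some other vertex of some other $E_r$, and never two events at once;
\item crossings in the opposite direction delete exactly the last interval;
\item above all, what the polygons look like on the far side of each crossing. Without this you cannot say which interval appears or where it sits.
\end{itemize}
As written, the adjacency is also off. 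The two sides of the surface where $I_p$ appears carry $D[m,p;n^*]$ and the list of the \emph{younger} parent of $\frac{m}{p}$, not $D[m^-,p^-;n^*]$ and $D[m^+,p^+;n^*]$. The ``concatenation of return times'' and ``sweep by rotation through $\theta$'' pictures are only heuristics. Moreover, $\frac{m}{p}$ is not a function of $\theta$ alone; it also depends on $\delta_R$, so no argument based purely on the rotation angle can produce it. Likewise, defining $m$ as a winding count and asserting $\frac{m}{p}\in[\frac{1}{n^*+1},\frac{1}{n^*})$ gives no link to the ordering of the $I_n$. Finally, the base case is not addressed: you need $I_2=\cdots=I_{n^*-1}=\varnothing$ and $I_{n^*}=[z_{n^*},1]$ with $z_{n^*}\in(0,1]$. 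This needs a genuine argument; the paper uses monotonicity of the slopes of $\Sigma_{-r}$ together with an area argument.

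What is missing is an invariant carried along in phase space at a \emph{fixed} parameter value. The paper works with $F_r$, the open set of points needing at least $r$ iterations to enter $\overline{\Omega}_L$. These are nested convex sets with $F_r=\{f_R^{-1}(P) \mid P\in F_{r-1},\ P_2<0\}$, and $F_{r+1}$ is obtained from $F_r$ by one cut along $\Sigma_{-r}$. Say $F_p$ has $\frac{m}{p}$-form if:
\begin{itemize}
\item it is a $p$-gon with edges $S_j\subset\Sigma_{-j}$ ordered anticlockwise by $\cP_{m,p}$ (equivalently, consecutive indices differ by $p^-$ or $-p^+$);
\item any vertex on or above $\Sigma_1$ is an endpoint of $S_{p-1}$.
\end{itemize}
This pins the vertices down as the points $Q^j_{p^-}$ and $Q^j_{p^+}$, and gives $J_p=(z_{p^-},z_{p^+})$. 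One then inspects the two endpoints $Q^{p^+}_{p^-}$ and $Q^{p^-}_{p^+}$ of $S_{p-1}$.
\begin{itemize}
\item If only the latter lies above $\Sigma_1$, then $p^-$ further cuts, each removing exactly one vertex, yield $F_{p+p^-}$ in $\frac{m+m^-}{p+p^-}$-form, and $I_p=[z_p,z_{p^+})$. The case where only the former lies above is symmetric, with $p^+$ in place of $p^-$.
\item If neither lies above, then $I_p=J_p$ and all later $I_n$ are empty.
\item Both lying above is impossible, because it would give $f_R(F_p)\subset F_p$, contradicting that $f_R$ expands area when $\delta_R>1$.
\end{itemize}
The rotation number is simply the fraction at which this process terminates, and no topology of parameter space is needed.

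Your uniqueness argument ($p=\max D$, with $m$ determined modulo $p$ by its left neighbour $p^-$) is fine. Your escape-time bound in terms of $\theta$ is not. Points near $X$ make many revolutions before leaving $\Omega_R$: at point {\bf a}, $\theta\approx 0.83\pi$, yet the maximal return time is $12$. So finiteness of the return times on $(0,1]$ needs a different argument.
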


\subsection{Remarks on Theorem \ref{th:existence}}
\label{sub:remarks}

Theorem \ref{th:existence} shows how the configuration of the first return map $g$
is characterised by an irreducible fraction $\frac{m}{p}$.
This value is independent of $\tau_L$
because the return time to $\Sigma_1$
is one greater than the time taken to enter $\overline{\Omega}_L$.
Before entering $\overline{\Omega}_L$, orbits evolve purely under the right piece of $f$,
hence the intervals $I_n$ are independent of the parameter $\tau_L$ of the left piece of $f$.
The value $\tau_L$ affects only the slopes and vertical positioning of the pieces of $g$.

In Theorem \ref{th:existence}, the non-empty $I_n$ cover $(-\infty,1]$,
so $g(x)$ is defined for all $x \in (-\infty,1]$.
This occurs because with $\delta_R > 1$ and $\delta_R > \frac{\tau_R^2}{4}$
the map has repelling focus fixed point belonging $\Omega_R$ but not $\Sigma_1$,
thus forward orbits cannot become trapped in $\Omega_R$.

Following Szalai and Osinga \cite{SzOs08},
we refer to $\frac{m}{p}$ as a {\em rotation number}.
This is because the set $F_p$ of all points in $\Omega_R$
that require at least $p$ iterations to enter $\overline{\Omega}_L$
is a convex $p$-sided polygon whose sides
are naturally ordered via the fraction $\frac{m}{p}$, see \S\ref{sub:minimalPolygons}.
This polygon, roughly speaking, sits inside the attractor of $f$.

The largest value in $D[m,p;n^*]$ is $p$, hence $p$ is the maximum return time.
Fig.~\ref{fig:numPiecesWithRotNums} repeats Fig.~\ref{fig:numPieces},
but now labels the regions by their rotation number $\frac{m}{p}$.

The fixed point of $f$ belongs to $F_p$, and has rotation number
$\rho_{\rm fp} = \frac{\phi}{2 \pi}$, where
\begin{equation}
\phi = \cos^{-1} \left( \frac{\tau_R}{2 \sqrt{\delta_R}} \right) \in (0,\pi).
\label{eq:phi}
\end{equation}
The value $\rho_{\rm fp}$ is irrational for almost all values of $\tau_R$ and $\delta_R$.
Thus $\frac{m}{p}$ can be interpreted as a rationalisation of $\rho_{\rm fp}$
brought about by the presence of the switching manifold.

\begin{figure}[b!]
\begin{center}
\includegraphics[width=15cm]{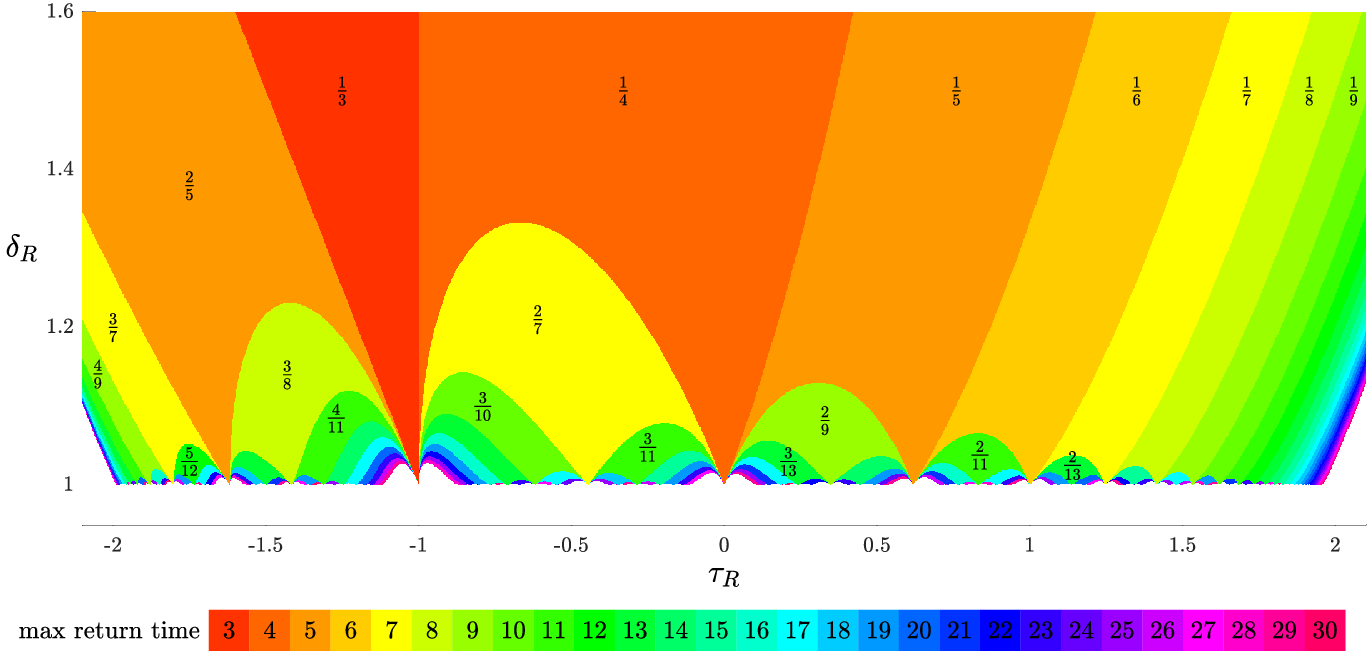}
\caption{
A repeat of Fig.~\ref{fig:numPieces}
but with regions labelled by the rotation number $\frac{m}{p}$ of Theorem \ref{th:existence}.
\label{fig:numPiecesWithRotNums}
} 
\end{center}
\end{figure}

\subsection{Constructing the rotation number and the non-empty intervals}
\label{sub:constr}

Let
\begin{align}
f_L(x,y;\tau_L) &= \begin{bmatrix} \tau_L x + y + 1 \\ 0 \end{bmatrix}, &
f_R(x,y;\tau_R,\delta_R) &= \begin{bmatrix} \tau_R x + y + 1 \\ -\delta_R x \end{bmatrix},
\label{eq:fLfR}
\end{align}
denote the left and right pieces of \eqref{eq:f}.
With $\delta_R \ne 0$, the forward orbit of $(x,0)$
cannot return to $\Sigma_1$ until entering $\overline{\Omega}_L$,
upon which it reaches $\Sigma_1$ on the next iteration.
In this case $g$ can be written as follows.

\begin{lemma}
Let $\tau_L, \tau_R, \delta_R \in \mathbb{R}$ with $\delta_R \ne 0$, and let $x \in \mathbb{R}$.
If $g(x)$ is defined, then
\begin{equation}
g(x) = f_L \left( f_R^r(x,0) \right)_1 \,,
\label{eq:g2}
\end{equation}
where $r = N(x) - 1$.
\label{le:rEqualsNminus1}
\end{lemma}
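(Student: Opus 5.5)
Let $n = N(x)$. I will show by induction on the orbit that the first $n-1$ iterates of $(x,0)$ are obtained by applying the right piece $f_R$, and that the $n$-th iterate comes from the left piece $f_L$.

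Write $(x_k,y_k) = f^k(x,0)$ for $k \ge 0$. The key observation concerns which half-plane each point lies in.

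\begin{itemize}
\item If $(x_k,y_k) \in \overline{\Omega}_L$, then $f(x_k,y_k) = f_L(x_k,y_k)$, so $y_{k+1} = 0$.
\item If $(x_k,y_k) \in \Omega_R$, then $x_k > 0$ and $f(x_k,y_k) = f_R(x_k,y_k)$, so $y_{k+1} = -\delta_R x_k$. Because $\delta_R \ne 0$, this gives $y_{k+1} \ne 0$.
\end{itemize}

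Hence, for every $k \ge 0$, we have $y_{k+1} = 0$ if and only if $(x_k,y_k) \in \overline{\Omega}_L$. (On the line $x=0$ both formulas agree, since $f$ is continuous, so the dichotomy is well defined.)

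By Definition \ref{df:firstReturnMap}, $n$ is the smallest $k+1 \ge 1$ with $y_{k+1} = 0$. It follows that $n-1$ is the smallest $k \ge 0$ with $(x_k,y_k) \in \overline{\Omega}_L$. In particular, $(x_k,y_k) \in \Omega_R$ for $0 \le k < n-1$.

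Set $r = n-1$. A simple induction on $k \le r$ gives $(x_k,y_k) = f_R^k(x,0)$, because each step before time $r$ starts from a point in $\Omega_R$ and so uses $f_R$. Since $(x_r,y_r) \in \overline{\Omega}_L$, the last step uses $f_L$:
\[
f^n(x,0) = f_L\big(f_R^r(x,0)\big).
\]
Taking first components yields \eqref{eq:g2}.

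The only subtle point is the boundary case $x_k = 0$, which is handled by continuity of $f$ as noted above. No step presents a real obstacle; the argument is routine.
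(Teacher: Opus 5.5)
Your proof is correct and follows the paper's own reasoning. The paper justifies the lemma by the remark that, with $\delta_R \ne 0$, an orbit from $\Sigma_1$ cannot return to $\Sigma_1$ while it stays in $\Omega_R$, and does return on the step after it enters $\overline{\Omega}_L$; your dichotomy $y_{k+1} = 0 \iff (x_k,y_k) \in \overline{\Omega}_L$ and the short induction make exactly this rigorous.
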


In \eqref{eq:g2}, $r$ is the smallest non-negative integer
for which $f_R^r(x,0) \in \overline{\Omega}_L$.
Since $f_R$ is affine, it is simple exercise
to obtain the following explicit expression for
the first component of $f_R^r(x,0)$.

\begin{lemma}
Let $\tau_R, \delta_R \in \mathbb{R}$ with $\delta_R > \frac{\tau_R^2}{4}$.
Given $r \ge 0$, the first component of $f_R^r(x,0)$ is 
\begin{equation}
f_R^r(x,0)_1 = \alpha_r x + \beta_r \,,
\label{eq:fRr1}
\end{equation}
where
\begin{equation}
\begin{split}
\alpha_r &= \frac{\delta_R^{\frac{r}{2}} \sin((r+1) \phi)}{\sin(\phi)}, \\
\beta_r &= \frac{\sin(\phi) - \delta_R^{\frac{r}{2}} \sin((r+1) \phi)
+ \delta_R^{\frac{r+1}{2}} \sin(r \phi)}{(\delta_R - \tau_R + 1) \sin(\phi)},
\end{split}
\label{eq:alphabeta}
\end{equation}
and $\phi$ is given by \eqref{eq:phi}.
\label{le:alphabeta}
\end{lemma}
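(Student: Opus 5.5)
Since $f_R$ is affine, we write $f_R^r(x,0) = A^r (x,0)^{\sf T} + \sum_{j=0}^{r-1} A^j b$, where
\begin{equation*}
A = \begin{bmatrix} \tau_R & 1 \\ -\delta_R & 0 \end{bmatrix}, \qquad
b = \begin{bmatrix} 1 \\ 0 \end{bmatrix}.
\end{equation*}
The first component is then linear in $x$ with coefficient $\alpha_r = (A^r)_{11}$, plus a constant $\beta_r$ that does not depend on $x$. The plan is to compute each of these separately, using the hypothesis $\delta_R > \frac{\tau_R^2}{4}$. This hypothesis makes the eigenvalues of $A$ the complex pair $\sqrt{\delta_R}\, \re^{\pm \ri \phi}$ with $\phi \in (0,\pi)$ given by \eqref{eq:phi}, so that $\sin(\phi) \ne 0$.

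For $\alpha_r$, observe that the sequence $u_r = f_R^r(x,0)_1$ satisfies $u_{r+1} = \tau_R u_r + y_r + 1$ with $y_r = -\delta_R u_{r-1}$. This gives the second-order recurrence
\begin{equation*}
u_{r+1} = \tau_R u_r - \delta_R u_{r-1} + 1,
\end{equation*}
with initial data $u_0 = x$ and $u_1 = \tau_R x + 1$. The homogeneous part has characteristic roots $\sqrt{\delta_R}\,\re^{\pm\ri\phi}$, because $\tau_R = 2\sqrt{\delta_R}\cos(\phi)$. Its general solution is therefore $\delta_R^{r/2}\bigl(c_1 \cos(r\phi) + c_2 \sin(r\phi)\bigr)$.

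The coefficient of $x$ solves the homogeneous recurrence with initial values $1$ and $\tau_R$. We check that $\delta_R^{r/2}\sin((r+1)\phi)/\sin(\phi)$ fits these two values: it equals $1$ at $r=0$, and equals $\sqrt{\delta_R}\cdot 2\cos(\phi) = \tau_R$ at $r=1$. It also satisfies the recurrence, by the identity $\sin((r+2)\phi) = 2\cos(\phi)\sin((r+1)\phi) - \sin(r\phi)$. By induction on $r$ this gives $\alpha_r$.

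For $\beta_r$, a particular solution of the inhomogeneous recurrence is the constant $c = 1/(1 - \tau_R + \delta_R)$. This is where we use that $1 - \tau_R + \delta_R \ne 0$. It holds because $\delta_R > \tau_R^2/4$ gives $1 - \tau_R + \delta_R > (1 - \tau_R/2)^2 \ge 0$.

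Hence $\beta_r = c + h_r$, where $h_r$ solves the homogeneous recurrence with $h_0 = -c$ and $h_1 = 1 - c$. Rather than solve for $c_1, c_2$ directly, I would simply verify the claimed formula. Writing $s_r = \delta_R^{r/2}\sin(r\phi)/\sin(\phi)$, the claimed expression is
\begin{equation*}
\beta_r = c\bigl(1 - s_{r+1} + \delta_R s_r\bigr).
\end{equation*}
Both $s_{r+1}$ and $\delta_R s_r$ solve the homogeneous recurrence, since $s_r$ does and the equation is shift-invariant. So the right-hand side satisfies the inhomogeneous recurrence. It remains to check the initial values. At $r=0$ the expression is $c(1 - 1 + 0) = 0$. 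At $r=1$ it is $c(1 - \tau_R + \delta_R) = 1$. These match $u_0 = x$ and $u_1 = \tau_R x + 1$ with the $x$-parts removed.

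The only care needed is bookkeeping: matching the initial conditions and sign conventions, and confirming the factor $\delta_R^{(r+1)/2}\sin(r\phi)$ equals $\delta_R s_r \sin(\phi)$. There is no real obstacle beyond this trigonometric check.
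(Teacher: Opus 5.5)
Your strategy is the natural one and matches what the paper intends; the paper gives no proof beyond calling it a ``simple exercise'' since $f_R$ is affine. You reduce to $u_{r+1} = \tau_R u_r - \delta_R u_{r-1} + 1$, verify $\alpha_r$ by induction, and handle the constant term with the particular solution $c = 1/(1-\tau_R+\delta_R)$. The $\alpha_r$ part is correct. Your bound $1 - \tau_R + \delta_R > (1-\tfrac{\tau_R}{2})^2 \ge 0$ usefully justifies a division that the paper leaves implicit.

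The $\beta_r$ step, however, is wrong as written. With your $s_r = \delta_R^{r/2}\sin(r\phi)/\sin(\phi)$ you have $s_{r+1} = \sqrt{\delta_R}\,\alpha_r$ and $\delta_R s_r \sin(\phi) = \delta_R^{\frac{r+2}{2}}\sin(r\phi)$. So $c(1 - s_{r+1} + \delta_R s_r)$ is not the stated formula: both homogeneous terms carry an extra factor $\sqrt{\delta_R}$, and the ``bookkeeping'' identity you planned to confirm is false. Your initial-value check is also miscomputed. Since $s_1 = \sqrt{\delta_R}$, not $1$, your expression equals $c(1-\sqrt{\delta_R}) \ne 0$ at $r=0$ whenever $\delta_R \ne 1$. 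The correct rewriting of the stated formula is
\begin{equation*}
\beta_r = c\bigl(1 - \alpha_r + \sqrt{\delta_R}\, s_r\bigr).
\end{equation*}
Using $\alpha_0 = 1$, $\alpha_1 = \tau_R$, $s_0 = 0$ and $s_1 = \sqrt{\delta_R}$, its values at $r=0,1$ are $0$ and $c(1-\tau_R+\delta_R)=1$, exactly the numbers you wrote down. Since $\alpha_r$ and $\sqrt{\delta_R}\,s_r$ both solve the homogeneous recurrence, the rest of your argument then goes through unchanged. Also note that $s_r$ already denotes the slope of $\Sigma_{-r}$ in the paper, so a different letter would avoid confusion.
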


If $\alpha_r \ne 0$, then the unique value of $x \in \mathbb{R}$
for which $f_R^r(x,0)_1 = 0$ is $-\frac{\beta_r}{\alpha_r}$.
The values $-\frac{\beta_r}{\alpha_r}$ form the endpoints of the intervals $I_n$, and we write
\begin{equation}
z_n = -\frac{\beta_{n-1}}{\alpha_{n-1}},
\label{eq:zn}
\end{equation}
for any $n \ge 1$ for which $\alpha_{n-1} \ne 0$.

The following result provides a practical way of computing the intervals $I_n$.
Recall, $n^*$ is the value \eqref{eq:nStar}.
Note, $\alpha_{n^* - 1} \ne 0$ and $z_{n^*} \in (0,1]$,
by part 1 of Theorem \ref{th:construct}.

\begin{theorem}
Let $\tau_L, \tau_R, \delta_R \in \mathbb{R}$, with $\delta_R > 1$ and $\delta_R > \frac{\tau_R^2}{4}$.
The following algorithm generates the rotation number $\frac{m}{p}$ of Theorem \ref{th:existence}
and all non-empty intervals $I_n$ of $g$.
\begin{enumerate}[label=\arabic*),ref=\arabic*,itemsep=0mm]
\item
Let $I_1 = (-\infty,0]$ and $(m^-,p^-,z^-) = (0,1,0)$.
Let $I_{n^*} = \left[ z_{n^*}, 1 \right]$,
where $z_{n^*} \in (0,1]$ is well-defined,
and let $(m^+,p^+,z^+) = \left( 1,n^*,z_{n^*} \right)$.
\item
Let $\frac{m}{p} = \frac{m^- + m^+}{p^- + p^+}$.
Evaluate $\alpha_{p-1}$, and if $\alpha_{p-1} \ne 0$ evaluate $\beta_{p-1}$ and $z_p$.
\item
If $\alpha_{p-1} = 0$ or $z_p \notin \left( z^-, z^+ \right)$, let $I_p = \left( z^-, z^+ \right)$ and {\sc stop}.
Otherwise, if $\alpha_{p-1} < 0$, let $I_p = \left[ z_p, z^+ \right)$, update $(m^+,p^+,z^+) = (m,p,z_p)$, and return to Step 2,
while if $\alpha_{p-1} > 0$, let $I_p = \left( z^-, z_p \right]$, update $(m^-,p^-,z^-) = (m,p,z_p)$, and return to Step 2.
\end{enumerate}
\label{th:construct}
\end{theorem}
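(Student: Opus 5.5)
The plan is to prove Theorem \ref{th:construct} together with Theorem \ref{th:existence}, by induction down the Farey web. The induction is driven by an invariant on the current pair of Farey neighbours $\frac{m^-}{p^-} < \frac{m^+}{p^+}$, with $m^+p^- - m^-p^+ = 1$. The invariant states:
\begin{enumerate}[label=(\roman*)]
\item the intervals $I_n$ already assigned are exactly those for the denominators on the paths from $\frac{m^-}{p^-}$ up to $\frac{0}{1}$ and from $\frac{m^+}{p^+}$ up to $\frac{1}{n^*}$;
\item these intervals cover $(-\infty,z^-] \cup [z^+,1]$ in the prescribed order;
\item every $x \in (z^-,z^+)$ has return time at least $p^-+p^+$.
\end{enumerate}
By Lemma \ref{le:rEqualsNminus1}, $N(x) = r+1$, where $r$ is the first index with $f_R^r(x,0) \in \overline{\Omega}_L$. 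By Lemma \ref{le:alphabeta}, the condition $f_R^r(x,0)_1 \le 0$ is the half-line condition $\alpha_r x + \beta_r \le 0$, with endpoint $z_{r+1}$. So $I_n$ is $(-\infty,1]$ intersected with $\{\alpha_{n-1}x+\beta_{n-1}\le 0\}$ and with $\{\alpha_{k}x+\beta_k>0\}$ for all $k<n-1$; this is the interval structure of Lemma \ref{le:InInterval}.

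\textbf{Base case.} I will show $I_1=(-\infty,0]$ and that $x=1$ has return time $n^*$. I also need $z_{n^*}\in(0,1]$ with $\alpha_{n^*-1}<0$, and that every $x\in(0,z_{n^*})$ stays in $\Omega_R$ for at least $n^*+1$ iterates. For these I use the geometry of the focus: $f_R$ rotates points about the fixed point $X$ by the angle $\phi$ while expanding by $\sqrt{\delta_R}$. The points $(x,0)$ with $x\in(0,1]$ lie on a segment of $\Sigma_1$, and its images $f_R^k$ of the segment are segments that sweep around $X$. The first one to reach the $y$-axis does so at its $x=1$ end. The sign of $\alpha_r$, which by \eqref{eq:alphabeta} is the sign of $\sin((r+1)\phi)$, records which end of the image segment crosses first.

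\textbf{Inductive step.} Let $p=p^-+p^+$. The key geometric claim is that the image segments $f_R^k$ of $[z^-,z^+]$, for $k<p-1$, all lie in $\Omega_R$. For $k=p-1$, they lie in $\Omega_R$ except possibly near one endpoint, and that endpoint is determined by the sign of $\alpha_{p-1}$. The endpoints $z^\pm$ have $f_R^{p^\pm-1}(z^\pm,0)$ on the $y$-axis. The Farey relation $p=p^-+p^+$ lets me compose: $f_R^{p-1}(z^\pm,0)$ equals $f_R^{p^\mp}$ applied to a point of $\Sigma_0$. Whether this lands on the left or right is tied to the ordering of the polygon sides encoded by the rotation number.

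There are then two cases. If $z_p\in(z^-,z^+)$, the subinterval on the appropriate side of $z_p$ has return time exactly $p$. I assign it to $I_p$ and replace the corresponding neighbour by $\frac{m}{p}$, which re-establishes the invariant. Otherwise, all of $(z^-,z^+)$ enters $\overline{\Omega}_L$ at step $p-1$, so $I_p=(z^-,z^+)$ and the algorithm stops. If $\alpha_{p-1}=0$, the image is parallel to the $y$-axis and lies in $\Omega_L$, which gives the same conclusion.

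\textbf{Termination.} The nested intervals $(z^-,z^+)$ shrink. Since $\delta_R>1$, points near $X$ cannot stay in $\Omega_R$ forever, and $f_R^{k}$ of a bounded segment eventually has both endpoints in $\overline{\Omega}_L$. Hence the procedure stops at some finite $\frac{m}{p}$. The covering and the ordering by $D[m,p;n^*]$ then follow from the invariant. The fraction $\frac{m}{p}$ is independent of $\tau_L$ because only $f_R$ is involved, and it is unique because the $I_n$ determine it.

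\textbf{Main obstacle.} The hard step is the inductive geometric claim. It says that no $x\in(z^-,z^+)$ enters $\overline{\Omega}_L$ before step $p-1$, and that at step $p-1$ the side of the crossing is governed by the sign of $\alpha_{p-1}$. To prove it I would characterise the convex polygons $F_r$ bounded by preimages of $\Sigma_0$ and show that their vertices are ordered cyclically according to the Farey fractions.
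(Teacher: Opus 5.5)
You have the same skeleton as the paper: induction down the Farey web, with $(z^-,z^+)$ playing the role of $J_p$ (the points with return time at least $p$). Given that, the sign of $\alpha_{p-1}$ selects $[z_p,z^+)$ or $(z^-,z_p]$ exactly as in the paper's proof. You also guess the right tool, polygons $F_r$ with Farey-ordered edges. But the step you call the main obstacle is essentially the whole proof, it is not supplied, and as formulated your induction does not close.

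Take the case $\alpha_{p-1}<0$. After assigning $I_p=[z_p,z^+)$ you only know that points of $(z^-,z_p)$ have return time at least $p+1$. Re-establishing (iii) for the new pair needs return time at least $p^-+p$, i.e.\ $I_{p+1}=\cdots=I_{p+p^--1}=\varnothing$. Nothing in your invariant (i)--(iii) controls those further $p^--1$ iterates.

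The paper closes the induction by carrying the entire shape of $F_p$ as part of the hypothesis. This is the $\frac{m}{p}$-form of Definition~\ref{df:mpForm}:
\begin{itemize}
\item edges $S_j\subset\Sigma_{-j}$ ordered by $\cP_{m,p}$;
\item vertices $Q_{p^\pm}^j$ as in Lemma~\ref{le:verticesFp};
\item any vertex on or above $\Sigma_1$ is an endpoint of $S_{p-1}$.
\end{itemize}
It then proves Lemma~\ref{le:caseI}. For each $k<p^-$, the cut along $\Sigma_{-(p+k)}$ removes only the vertex $Q_{p^+}^{k+1}$. The new vertices stay below $\Sigma_1$; this needs a separate rotation argument for $Q_{p^-}^{p^-}$ when $p^->p^+$. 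Hence $F_{p^-+p}$ acquires the $\frac{m^-+m}{p^-+p}$-form, and $J_{p^-+p}=(z^-,z_p)$ follows from Lemma~\ref{le:Jp}.

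The stopping branch has a second, independent gap. If $\alpha_{p-1}=0$ or $z_p\notin(z^-,z^+)$, then $\alpha_{p-1}x+\beta_{p-1}$ has constant sign on $(z^-,z^+)$. So either the whole segment enters $\overline{\Omega}_L$ at step $p-1$, or the whole segment stays in $\Omega_R$; in the latter case the output $I_p=(z^-,z^+)$ would be false. You simply assert the first alternative, and likewise that the image ``lies in $\Omega_L$'' when $\alpha_{p-1}=0$. Your stated key claim, ``in $\Omega_R$ except possibly near one endpoint'', actually admits the bad alternative and contradicts the stop case. It is also false for the closed segment $[z^-,z^+]$, whose endpoints reach $\Sigma_0$ at steps $p^\pm-1$.

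Excluding the bad alternative is precisely where $\delta_R>1$ is used. In the proof of Proposition~\ref{pr:threeCases}, suppose both endpoints $Q_{p^-}^{p^+}$ and $Q_{p^+}^{p^-}$ of $S_{p-1}$ were on or above $\Sigma_1$. Then the $\frac{m}{p}$-form forces $f_R(F_p)\subset F_p$, which is impossible for an area-expanding map.

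Your base case is likewise only heuristic. ``Image segments sweep around $X$'' does not establish the following:
\begin{itemize}
\item $I_2=\cdots=I_{n^*-1}=\varnothing$;
\item $I_{n^*}=[z_{n^*},1]$ with $z_{n^*}\in(0,1]$;
\item $F_{n^*+1}$ has the $\frac{1}{n^*+1}$-form needed to start the induction.
\end{itemize}
The paper needs all of Proposition~\ref{pr:nStar} for this: monotonicity of the slopes $s_r$ up to $q^*$, plus another area-expansion contradiction.
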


The algorithm terminates when {\sc stop} is reached,
and this always occurs after finitely many computations.

\subsection{Example}
\label{sub:example}

Here we execute the algorithm in Theorem \ref{th:construct}
with $\tau_R = -1.75$ and $\delta_R = 1.03$, corresponding to
point {\bf a} of Fig.~\ref{fig:bifSet}.
First observe $n^* = 2$, see Fig.~\ref{fig:nStarDivision},
so $z_{n^*} = -\frac{\beta_1}{\alpha_1} = -\frac{1}{\tau_R} = 0.5714$;
here, and in the calculations below, numbers are rounded to four decimal places.

\begin{enumerate}[label=\roman*),ref=\roman*,itemsep=0mm]
\item
\begin{itemize}[itemsep=0mm]
\item
In Step 1 we let $I_1 = (-\infty,0]$ and $(m^-,p^-,z^-) = (0,1,0)$.
Also $I_2 = [0.5714,1]$ and $(m^+,p^+,z^+) = (1,2,0.5714)$.
\item
In Step 2 we have $\frac{m^-}{p^-} = \frac{0}{1}$ and $\frac{m^+}{p^+} = \frac{1}{2}$,
so the Farey sum gives $\frac{m}{p} = \frac{1}{3}$.
The formulas \eqref{eq:alphabeta} give $\alpha_2 = 2.0325$ and $\beta_2 = -0.75$,
then \eqref{eq:zn} gives $z_3 = 0.3690$.
\item
In Step 3, $z_3 \in \left( z^-, z^+ \right) = (0,0.5714)$ and $\alpha_2 > 0$.
Thus $I_3 = (0,0.3690)$, and we perform the update $(m^-,p^-,z^-) = (1,3,0.3690)$.
\end{itemize}
\item
\begin{itemize}
\item
Returning to Step 2, now $\frac{m^-}{p^-} = \frac{1}{3}$ and $\frac{m^+}{p^+} = \frac{1}{2}$,
so the Farey sum gives $\frac{m}{p} = \frac{2}{5}$.
The formulas \eqref{eq:alphabeta} give $\alpha_4 = 0.9767$ and $\beta_4 = -0.4719$,
then \eqref{eq:zn} gives $z_5 = 0.4831$.
\item
In Step 3, $z_5 \in \left( z^-, z^+ \right) = (0.3690,0.5714)$ and $\alpha_4 > 0$.
Thus $I_5 = (0.3690,0.4831)$, and we perform the update $(m^-,p^-,z^-) = (2,5,0.4831)$.
\end{itemize}
\item
\begin{itemize}
\item
Returning to Step 2, now $\frac{m^-}{p^-} = \frac{2}{5}$ and $\frac{m^+}{p^+} = \frac{1}{2}$,
so the Farey sum gives $\frac{m}{p} = \frac{3}{7}$.
The formulas \eqref{eq:alphabeta} give $\alpha_6 = -1.1772$ and $\beta_6 = 0.6026$,
then \eqref{eq:zn} gives $z_7 = 0.5119$.
\item
In Step 3, $z_7 \in \left( z^-, z^+ \right) = (0.4831,0.5714)$ and $\alpha_6 < 0$.
Thus $I_7 = (0.5119,0.5714)$, and we perform the update $(m^+,p^+,z^+) = (3,7,0.5119)$.
\end{itemize}
\item
\begin{itemize}
\item
Returning to Step 2, now $\frac{m^-}{p^-} = \frac{2}{5}$ and $\frac{m^+}{p^+} = \frac{3}{7}$,
so the Farey sum gives $\frac{m}{p} = \frac{5}{12}$.
The formulas \eqref{eq:alphabeta} give $\alpha_{11} = -0.2135$ and $\beta_{11} = 0.0559$,
then \eqref{eq:zn} gives $z_{12} = 0.2619$.
\item
In Step 3, $z_{12} \notin \left( z^-, z^+ \right) = (0.4831,0.5119)$,
thus $I_{12} = (0.4831,0.5119)$ and the algorithm terminates.
\end{itemize}
\end{enumerate}

\begin{figure}[t!]
\begin{center}
\includegraphics[width=15cm]{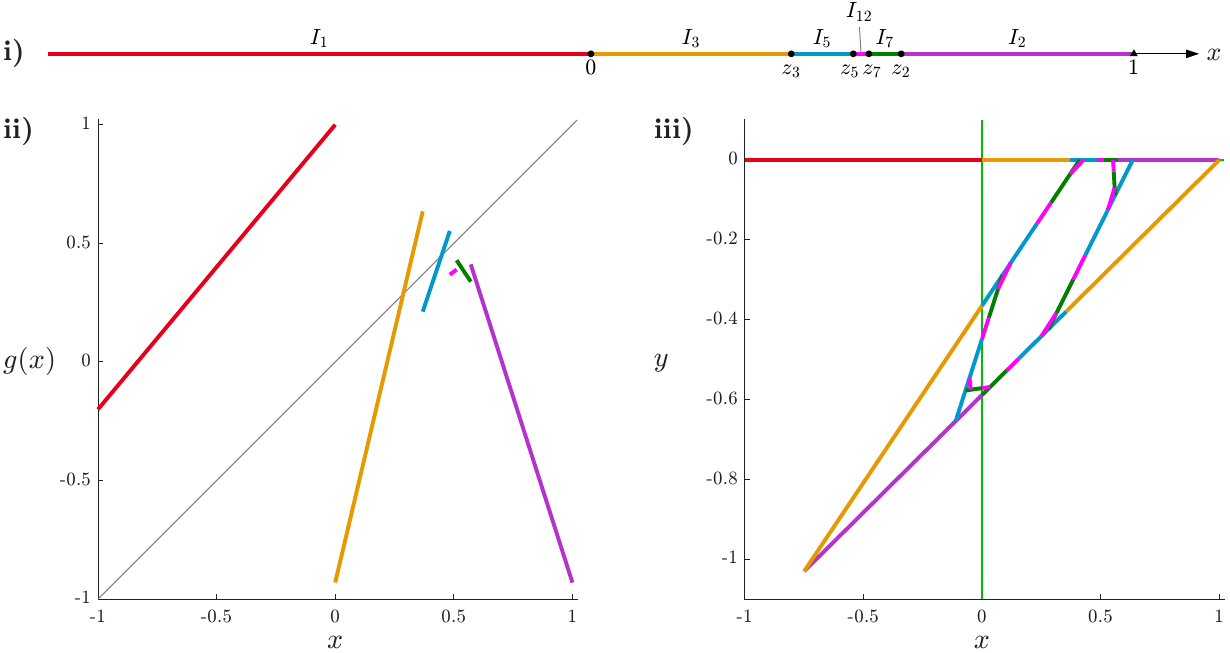}
\caption{
The return map $g$ and related constructions using
$(\tau_L,\tau_R,\delta_R) = (1.2,-1.75,1.03)$,
corresponding to point {\bf a} of Fig.~\ref{fig:bifSet}:
(i) shows the non-empty intervals $I_n$;
(ii) shows the return map $g$;
(iii) shows the images of $I_n \times \{ 0 \}$ under $f$.
\label{fig:invariantMultigon6}
} 
\end{center}
\end{figure}

The algorithm terminated with
rotation number $\frac{m}{p} = \frac{5}{12}$.
Fig.~\ref{fig:invariantMultigon6}(i) shows the computed intervals $I_n$.
In accordance with Theorem \ref{th:existence}
the intervals are ordered by $D[5,12;2] = (1,3,5,12,7,2)$
and cover the interval $(-\infty,1]$.

Fig.~\ref{fig:invariantMultigon6}(ii) shows the return map $g$ when $\tau_L = 1.2$.
With different values of $\tau_L$,
the pieces of $g$ have different slopes and vertical positionings.
Fig.~\ref{fig:invariantMultigon6}(iii)
shows the sets $I_n \times \{ 0 \} \subset \mathbb{R}^2$ when $\tau_L = 1.2$.
Specifically, we show $f_R^i \left( I_n \times \{ 0 \} \right)$ for all $i = 0,1,\ldots,n-1$,
using a different colour for each $n \in D[5,12;2]$.
The attractor of $f$, shown in Fig.~\ref{fig:bifSet},
appears to be dense on the union of these sets, except covers only part of $I_1 \times \{ 0 \}$.
In this way the configuration of $g$
conveys considerable information about the geometry of the attractor of $f$.

\subsection{Proofs of Lemmas \ref{le:xPrimeLe1} and \ref{le:InInterval}}
\label{sub:lemmaProofs}

\begin{proof}[Proof of Lemma \ref{le:xPrimeLe1}]
If $x \le 0$, then $N(x) = 1$.
In this case $g(x) = \tau_L x + 1 \le 1$, because $\tau_L \ge 0$.

If $x > 0$, then $N(x) \ge 2$ because $\delta_R \ne 0$.
In this case $f^{n-1}(x,0) \in \overline{\Omega}_L$ and $f^{n-2}(x,0) \in \Omega_R$, where $n = N(x)$.
Write $f^{n-1}(x,0) = \left( \tilde{x}, \tilde{y} \right)$
and $f^{n-2}(x,0) = \left( \tilde{\tilde{x}}, \tilde{\tilde{y}} \right)$.
So $\tilde{x} \le 0$ and $\tilde{\tilde{x}} > 0$,
the latter inequality implying $\tilde{y} = -\delta_R \tilde{\tilde{x}} < 0$, because $\delta_R > 0$.
Then $g(x) = \tau_L \tilde{x} + \tilde{y} + 1 \le 1$, because $\tau_L \ge 0$.
\end{proof}

\begin{proof}[Proof of Lemma \ref{le:InInterval}]
If $\delta_R = 0$, then $I_1 = (-\infty,1]$ and $I_n = \varnothing$ for all $n \ge 2$,
so the result holds.

Suppose $\delta_R \ne 0$.
Then $I_1 = (-\infty,0]$.
Choose any $n \ge 2$ and $w_1, w_2 \in I_n$, where $I_n \ne \varnothing$.
Then for each $j = 1,2$, $f^i(w_j,0) \in \Omega_R$ for all $i = 0,1,\ldots,n-2$,
and $f^{n-1}(w_j,0) \in \overline{\Omega}_L$.
Now consider the convex combination $x = (1-s) w_1 + s w_2$, where $s \in [0,1]$.
Since $f$ is affine on the convex set $\Omega_R$,
by induction on $i$ we have $f^i(x,0) \in \Omega_R$ for all $i = 0,1,\ldots,n-2$,
and $f^i(x,0) = (1-s) f^i(w_1,0) + s f^i(w_2,0)$ for all $i = 0,1,\ldots,n-1$.
Thus $f^{n-1}(x,0) \in \overline{\Omega}_L$, and hence $x \in I_n$.
Thus $I_n$ is convex, so is either an interval or a singleton.

With the same $w_1$, consider now the line segment
$K_\ee = \left\{ (x,0) \,\middle|\, x \in [w_1-\ee,w_1+\ee] \right\}$, where $\ee > 0$.
Since $f$ is affine on $\Omega_R$,
there exists $\ee > 0$ such that $f^i(K_\ee) \subset \Omega_R$ for all $i = 0,1,\ldots,n-2$,
and $f^{n-1}(K_\ee)$ is a line segment centred at $z$.
Thus either $f^{n-1}(z-\ee,0) \in \overline{\Omega}_L$, implying $w_1 - \ee \in I_n$,
or $f^{n-1}(z+\ee,0) \in \overline{\Omega}_L$, implying $w_1 + \ee \in I_n$.
In either case, if $w_1 < 1$ then $I_n$ is not the singleton $\{ w_1 \}$.
\end{proof}

\section{Farey constructs and rigid rotations}
\label{sec:Farey}

This section provides a brief review of the Farey organisation of rational numbers on $[0,1]$.
For further details, see \cite{GoTr96,HaZh18,HaWr08,LaTr95}.

A fraction $\frac{m}{p}$ is {\em irreducible} if the integers $m$ and $p$ have no common factors,
and the denominator of an irreducible fraction is always assumed to be positive.
In this section we also use irreducible fractions
to define permutations and relate these to rigid rotations.

\subsection{Farey neighbours and parents}
\label{sub:neighboursParents}

\begin{definition}
Let $\frac{m^-}{p^-}, \frac{m^+}{p^+} \in [0,1]$ be irreducible with $\frac{m^-}{p^-} < \frac{m^+}{p^+}$.
We say $\frac{m^-}{p^-}$ and $\frac{m^+}{p^+}$ are {\em Farey neighbours} if $m^+ p^- - m^- p^+ = 1$.
\end{definition}

\begin{definition}
The {\em Farey sum} of Farey neighbours $\frac{m^-}{p^-}$ and $\frac{m^+}{p^+}$ is
\begin{equation}
\frac{m^-}{p^-} \oplus \frac{m^+}{p^+} = \frac{m^- + m^+}{p^- + p^+}.
\label{eq:FareySum}
\end{equation}
\label{df:FareySum}
\end{definition}

Notice that we only apply the Farey sum to Farey neighbours.
This ensures that the Farey sum enjoys the properties listed below.

\begin{proposition}
For any Farey sum \eqref{eq:FareySum},
\begin{enumerate}[label=\roman*),ref=\roman*,itemsep=0mm]
\item
the right-hand side of \eqref{eq:FareySum} is irreducible,
\item
$\frac{m^-}{p^-}$ and $\frac{m}{p}$ are Farey neighbours,
\item
$\frac{m}{p}$ and $\frac{m^+}{p^+}$ are Farey neighbours.
\end{enumerate}
\label{pr:FareySumIrreducible}
\end{proposition}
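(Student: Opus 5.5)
The proof is a short computation built on the defining identity $m^+ p^- - m^- p^+ = 1$ of Farey neighbours. Throughout, write $m = m^- + m^+$ and $p = p^- + p^+$ for the numerator and denominator on the right-hand side of \eqref{eq:FareySum}. The idea is to show that the new fraction $\frac{m}{p}$ satisfies the same unimodular relation with each of its two parents. Irreducibility and both neighbour properties then follow at once, once we also check the order and range conditions in the definition.

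For part (ii), we compute directly:
\begin{equation*}
m p^- - m^- p = (m^- + m^+) p^- - m^- (p^- + p^+) = m^+ p^- - m^- p^+ = 1.
\end{equation*}
For part (iii), the analogous computation gives
\begin{equation*}
m^+ p - m p^+ = m^+ (p^- + p^+) - (m^- + m^+) p^+ = m^+ p^- - m^- p^+ = 1.
\end{equation*}
Part (i) follows from either identity. Any common divisor $d$ of $m$ and $p$ divides $m p^- - m^- p = 1$, so $d = 1$. Also $p = p^- + p^+ > 0$, since both denominators are positive by convention, so $\frac{m}{p}$ is irreducible with positive denominator as required.

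To finish (ii) and (iii), the definition of Farey neighbours also requires the order and range conditions. The positive determinants above give
\begin{equation*}
\frac{m}{p} - \frac{m^-}{p^-} = \frac{1}{p p^-} > 0
\quad\text{and}\quad
\frac{m^+}{p^+} - \frac{m}{p} = \frac{1}{p p^+} > 0.
\end{equation*}
Hence $\frac{m^-}{p^-} < \frac{m}{p} < \frac{m^+}{p^+}$, which is the mediant inequality. In particular $\frac{m}{p} \in [0,1]$, because it lies strictly between two numbers in $[0,1]$. With irreducibility from (i) and the two determinant identities, both pairs are Farey neighbours.

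No step is a real obstacle. The only point needing care is recording explicitly that $p > 0$ and that the ordering and $[0,1]$ conditions hold. Without these, the determinant identity alone would not meet the paper's definition of Farey neighbours.
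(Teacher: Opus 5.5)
Your proof is correct and complete. The two identities $mp^- - m^-p = 1$ and $m^+p - mp^+ = 1$, the divisibility argument for irreducibility, and your check of positivity of $p$, the ordering and the $[0,1]$ range are exactly what the paper's definition of Farey neighbours requires. The paper gives no proof of this proposition; it treats it as a standard fact from the cited Farey literature, and your determinant computation is the standard argument.
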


\begin{proposition}
For any irreducible $\frac{m}{p} \in (0,1)$,
there exist unique Farey neighbours $\frac{m^-}{p^-} < \frac{m^+}{p^+}$
such that $\frac{m^-}{p^-} \oplus \frac{m^+}{p^+} = \frac{m}{p}$.
Moreover,
\begin{equation}
\begin{aligned}
m^- &= \frac{m d - 1}{p}, & \qquad \qquad
m^+ &= \frac{m (p-d) + 1}{p}, \\
p^- &= d, & \qquad \qquad
p^+ &= p-d,
\end{aligned}
\label{eq:FareyParents}
\end{equation}
where $d$ is the multiplicative inverse of $m$ modulo $p$.
\label{pr:FareyParents}
\end{proposition}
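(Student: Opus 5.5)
We prove Proposition~\ref{pr:FareyParents} using only the neighbour condition $m^+p^- - m^-p^+ = 1$ and Proposition~\ref{pr:FareySumIrreducible}. The plan is to show that any Farey neighbours summing to $\frac{m}{p}$ must be given by \eqref{eq:FareyParents}, and then to check that \eqref{eq:FareyParents} really does produce Farey neighbours with the correct sum.

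\emph{Necessity and uniqueness.} Suppose $\frac{m^-}{p^-} < \frac{m^+}{p^+}$ are Farey neighbours with $\frac{m^-}{p^-} \oplus \frac{m^+}{p^+} = \frac{m}{p}$. By Proposition~\ref{pr:FareySumIrreducible}(i), $\frac{m^-+m^+}{p^-+p^+}$ is irreducible. Irreducible representations with positive denominator are unique, so $m = m^-+m^+$ and $p = p^-+p^+$.

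Substitute $m^+ = m - m^-$ and $p^+ = p - p^-$ into the neighbour condition. Expanding gives
\begin{equation*}
m p^- - m^- p = 1 .
\end{equation*}
Hence $m p^- \equiv 1 \pmod p$, so $p^-$ is congruent to $d$, the inverse of $m$ modulo $p$. Since $\frac{m}{p} \in (0,1)$ we have $p \ge 2$, and both $p^-$ and $p^+$ are at least $1$, so $1 \le p^- \le p-1$. Taking $d$ as the representative in $\{1,\ldots,p-1\}$, this forces $p^- = d$. Then $m^- = \frac{md-1}{p}$, $p^+ = p-d$, and $m^+ = m - m^- = \frac{m(p-d)+1}{p}$. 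This proves uniqueness and the formulas.

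\emph{Existence.} Now define $m^\pm, p^\pm$ by \eqref{eq:FareyParents}, with $d \in \{1,\ldots,p-1\}$ the inverse of $m$ modulo $p$. The steps are:
\begin{itemize}
\item $m^-$ and $m^+$ are integers, because $md \equiv 1 \pmod p$.
\item The denominators $p^- = d$ and $p^+ = p-d$ are positive.
\item Reversing the algebra above gives $m^+p^- - m^-p^+ = 1$.
\item $m^- + m^+ = m$ and $p^- + p^+ = p$ hold directly.
\item Each fraction is irreducible, since any common factor of its numerator and denominator divides $m^+p^- - m^-p^+ = 1$.
\item The neighbour identity gives $\frac{m^+}{p^+} - \frac{m^-}{p^-} = \frac{1}{p^-p^+} > 0$, so $\frac{m^-}{p^-} < \frac{m^+}{p^+}$.
\end{itemize}

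\emph{Main obstacle.} The only delicate point is checking that both fractions lie in $[0,1]$. Since $md \ge 1$, we get $m^- \ge 0$. From $m < p$ we get $md - 1 < pd$, so $m^- < d = p^-$. Similarly, $m \ge 1$ gives $m^+ > 0$, and $m \le p-1$ gives
\begin{equation*}
m(p-d) + 1 \le (p-1)(p-d) + 1 \le p(p-d),
\end{equation*}
where the last inequality uses $p - d \ge 1$. Hence $m^+ \le p^+$. Therefore both fractions lie in $[0,1]$, they are Farey neighbours in the sense of the definition, and their Farey sum is $\frac{m}{p}$.
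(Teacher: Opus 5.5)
Your proof is correct and complete. The paper gives no proof of this proposition: it is quoted as standard background from the Farey literature cited at the start of Section~\ref{sec:Farey}, so there is no argument in the paper to compare yours against.

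Your route is the natural elementary one. You reduce the problem to the Bézout-type identity $mp^- - m^-p = 1$ with $1 \le p^- \le p-1$. This forces $p^- = d$, which gives uniqueness and the formulas at once. Reversing the algebra then gives existence, and your range check places both parents in $[0,1]$.

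Some minor points:
\begin{itemize}
\item You lean on Proposition~\ref{pr:FareySumIrreducible}(i), which the paper also states without proof. It is one line: any common divisor of $m^-+m^+$ and $p^-+p^+$ divides $(m^-+m^+)p^- - m^-(p^-+p^+) = m^+p^- - m^-p^+ = 1$. Including it would make your argument fully self-contained.
\item Your explicit choice of $d \in \{1,\ldots,p-1\}$ is the right reading of the statement, since $p^- = d$ holds only for that representative.
\item The congruence $mp^- \equiv 1 \pmod p$, together with $p^- + p^+ = p$, is exactly what the paper later uses in the proof of Lemma~\ref{le:differences}.
\end{itemize}
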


For example, if $\frac{m}{p} = \frac{3}{7}$, then $d = 5$.
Then \eqref{eq:FareyParents} gives $\frac{m^-}{p^-} = \frac{2}{5}$
and $\frac{m^+}{p^+} = \frac{1}{2}$.

\begin{definition}
Let $\frac{m}{p} \in (0,1)$ be irreducible.
The fractions $\frac{m^-}{p^-}$ and $\frac{m^+}{p^+}$ of Proposition \ref{pr:FareyParents}
are the {\em left parent} and {\em right parent} of $\frac{m}{p}$, respectively.
\label{df:FareyParents}
\end{definition}

\subsection{The Farey web and denominator list}
\label{sub:web}

The following terminology is taken from Brucks {\em et al.}~\cite{BrRi02}.

\begin{definition}
The {\em Farey web} is the graph with vertices $\mathbb{Q} \cap [0,1]$
connected by an edge if and only if they are Farey neighbours.
\label{df:FareyWeb}
\end{definition}

Fig.~\ref{fig:sternBrocot} shows the Farey web for numbers
in $\left[ 0, \frac{1}{2} \right]$ up to level six.
We now define the list $D$ used in Theorem \ref{th:existence}.

\begin{definition}
Let $n^* \ge 1$ and $\frac{m}{p} \in \left( 0, \frac{1}{n^*} \right)$ be irreducible.
Let $\frac{m^-_1}{p^-_1}$ be the left parent of $\frac{m}{p}$,
let $\frac{m^-_2}{p^-_2}$ be the left parent of $\frac{m^-_1}{p^-_1}$,
and so on until $\frac{m^-_s}{p^-_s} = \frac{0}{1}$, for some $s \ge 1$.
Let $\frac{m^+_1}{p^+_1}$ be the right parent of $\frac{m}{p}$,
let $\frac{m^+_2}{p^+_2}$ be the right parent of $\frac{m^+_1}{p^+_1}$,
and so on until $\frac{m^+_t}{p^+_t} = \frac{1}{n^*}$, for some $t \ge 1$.
The {\em denominator list} is
\begin{equation}
D[m,p;n^*] = \left( p^-_s, \ldots, p^-_2, p^-_1, p, p^+_1, p^+_2, \ldots, p^+_t \right).
\label{eq:D}
\end{equation}
\label{df:denominatorList}
\end{definition}

\subsection{Permutations corresponding to rigid rotations}
\label{sub:permutations}

We now use rigid rotation with rotation number $\frac{m}{p}$
to define a permutation of $\mathbb{Z}_p = \{ 0,1,\ldots,p-1 \}$.
We then show that the difference
between consecutive elements of this permutation obey a certain minimal property (Lemma \ref{le:differences}).

\begin{definition}
A {\em permutation} of $\mathbb{Z}_p$ is a bijection $\sigma : \mathbb{Z}_p \to \mathbb{Z}_p$,
and we write
\begin{equation}
\sigma = \big( \sigma(0), \sigma(1), \ldots, \sigma(p-1) \big).
\nonumber
\end{equation}
\label{df:permutation}
\end{definition}

\begin{definition}
For any permutation $\sigma$ of $\mathbb{Z}_p$,
we define the {\em difference vector} $v = \partial \sigma$ by
\begin{equation}
v_i = \sigma((i+1) \,{\rm mod}\, p) - \sigma(i),
\label{eq:vi}
\end{equation}
for all $i = 0,1,\ldots,p-1$.
\label{df:differenceVector}
\end{definition}

For example, $\sigma = (3,2,6,1,5,0,4)$ is a permutation of $\mathbb{Z}_7$.
In this case \eqref{eq:vi} gives $\partial \sigma = (-1,4,-5,4,-5,4,-1)$.
Notice that the entries of any difference vector sum to zero.

\begin{definition}
For any irreducible $\frac{m}{p} \in (0,1)$, we define the {\em rotation permutation} $\cP_{m,p}$ by
\begin{equation}
\cP_{m,p}(i m \text{\,mod\,} p) = i, \qquad \text{for all $i=0,1,\ldots,p-1$}.
\label{eq:cP}
\end{equation}
\label{df:rotationPermutation}
\end{definition}

For example, consider $\frac{m}{p} = \frac{3}{7}$,
In this case \eqref{eq:cP} gives $\cP_{3,7}(0) = 0$, 
$\cP_{3,7}(3) = 1$, $\cP_{3,7}(6) = 2$, $\cP_{3,7}(2) = 3$, and so on,
hence $\cP_{3,7} = (0,5,3,1,6,4,2)$.

The rotation permutation $\cP_{m,p}$ can be constructed geometrically as follows.
Draw $p$ nodes on a circle and label one of them $0$, Fig.~\ref{fig:rotWordSchem}(i).
From node $0$, step $m$ nodes clockwise and label it $1$, Fig.~\ref{fig:rotWordSchem}(ii).
Continue stepping $m$ nodes clockwise and labelling $2$, $3$, etc, until all nodes have been labelled, Fig.~\ref{fig:rotWordSchem}(iii).
Then $\cP_{m,p}$ is given by the labels of the nodes ordered clockwise from $0$.

\begin{figure}[t!]
\begin{center}
\includegraphics[width=15cm]{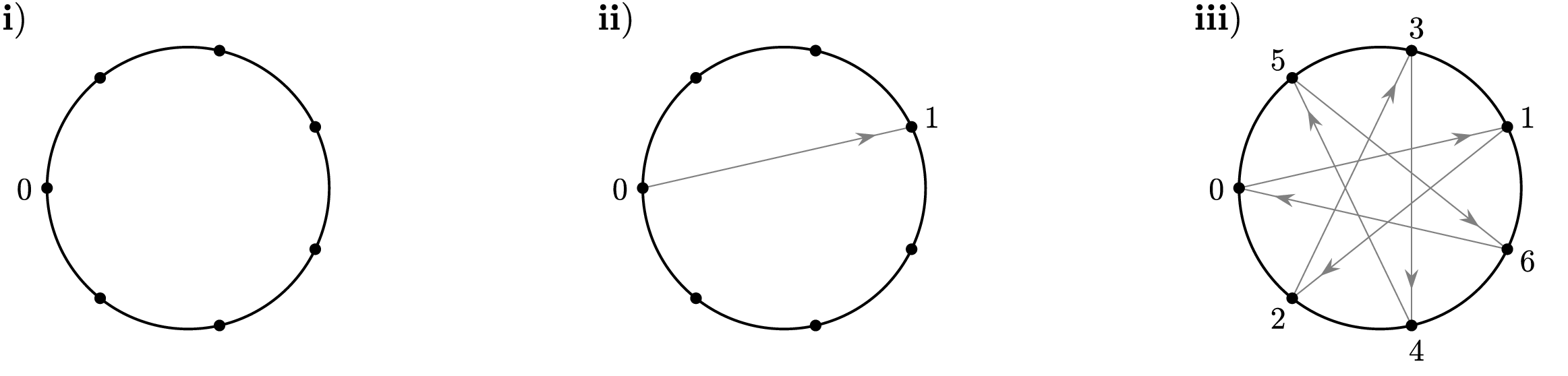}
\caption{
A illustration of the geometric construction of $\cP_{3,7}$ described in the text.
\label{fig:rotWordSchem}
} 
\end{center}
\end{figure}

The difference vector of $\sigma = \cP_{3,7}$ is $\partial \sigma = (5,-2,-2,5,-2,-2,-2)$.
Notice this vector contains only two distinct values.
The following result shows that this property can be used to characterise $\cP_{m,p}$.

\begin{lemma}
Let $\frac{m}{p} \in (0,1)$ be irreducible
with left parent $\frac{m^-}{p^-}$ and right parent $\frac{m^+}{p^+}$,
and let $\sigma$ be a permutation of $\mathbb{Z}_p$.
Then $\sigma = \cP_{m,p}$ if and only if $\sigma(0) = 0$ and
every entry of $\partial \sigma$ is either $p^-$ or $-p^+$.
\label{le:differences}
\end{lemma}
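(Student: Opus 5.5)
The plan is to work with the explicit formula $\cP_{m,p}(j) = j d \bmod p$, where $d$ is the multiplicative inverse of $m$ modulo $p$. By Proposition \ref{pr:FareyParents} this same $d$ satisfies $p^- = d$ and $p^+ = p - d$. So the two allowed difference values are $d$ and $d - p$, and these are exactly the two integers in $(-p,p)$ congruent to $d$ modulo $p$. Both directions of the lemma then reduce to congruence arguments modulo $p$.

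First I derive the formula. By \eqref{eq:cP}, $\cP_{m,p}(j) = i$ exactly when $j \equiv i m \pmod p$. Multiplying by $d$ gives $i \equiv j d \pmod p$, and since $i \in \mathbb{Z}_p$ we get $\cP_{m,p}(j) = jd \bmod p$. In particular $\cP_{m,p}(0) = 0$.

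\emph{Forward direction.} Take $\sigma = \cP_{m,p}$.
\begin{itemize}
\item For $0 \le j \le p-2$, the entry $v_j = \sigma(j+1) - \sigma(j)$ is a difference of two elements of $\mathbb{Z}_p$, so it lies in $(-p,p)$. It is also congruent to $d$ modulo $p$. Hence $v_j \in \{d, d-p\} = \{p^-, -p^+\}$.
\item For the wrap-around entry, $\sigma(p-1) = (p-1)d \bmod p = p - d$, since $-d \bmod p = p - d$ (note $1 \le d \le p-1$). Therefore $v_{p-1} = 0 - (p-d) = -p^+$.
\end{itemize}
So every entry of $\partial\sigma$ is either $p^-$ or $-p^+$.

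\emph{Converse.} Suppose $\sigma(0) = 0$ and every entry of $\partial\sigma$ is $d$ or $d-p$. Then $\sigma(i+1) \equiv \sigma(i) + d \pmod p$ for $i = 0,\ldots,p-2$. By induction on $i$, $\sigma(i) \equiv i d \pmod p$. Since $\sigma(i) \in \mathbb{Z}_p$, this forces $\sigma(i) = id \bmod p = \cP_{m,p}(i)$ for every $i$. Hence $\sigma = \cP_{m,p}$. The wrap-around condition is not even needed for this direction.

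The only point needing care is the identification $p^- = d$ and $p^+ = p - d$, which comes directly from \eqref{eq:FareyParents}. Beyond that there is no real obstacle; the whole argument is elementary modular arithmetic.
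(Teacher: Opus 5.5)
Your proof is correct and follows essentially the same route as the paper. Both use $p^- = d$ from Proposition~\ref{pr:FareyParents} to get $\mathcal{P}_{m,p}(j) = j p^- \bmod p$, read off that consecutive values differ by $p^-$ or $-p^+$, and obtain the converse from the fact that $\sigma(0)=0$ together with the step constraint determines $\sigma$ uniquely. Your version is only slightly more explicit: it treats the wrap-around entry $v_{p-1}$ separately, and it makes the paper's informal uniqueness argument a clean induction on $\sigma(i) \equiv i d \pmod p$.
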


\begin{proof}
Suppose $\sigma = \cP_{m,p}$.
By \eqref{eq:FareyParents}, $p^-$ is the multiplicative inverse of $m$ modulo $p$,
thus by substituting $j = i m \text{~mod~} p$ into \eqref{eq:cP} we obtain
$\sigma(j) = j p^- \text{~mod~} p$ for all $j$.
So $\sigma(0) = 0$, and as we go from one index to the next the value either increases by $p^-$,
or increases by $p^-$ but exceeds $p$, so actually decreases by $p - p^- = p^+$, due the modulo $p$.
Thus every entry of $\partial \sigma$ is either $p^-$ or $-p^+$.

There is only one permutation of $\mathbb{Z}_p$
with $\sigma(0) = 0$ and for which
every entry of $\partial \sigma$ is either $p^-$ or $-p^+$.
This is because the permutation starts from $0$,
and then as we go from one index to the next there is only ever one possibility
for whether the value increases by $p^-$ or decreases by $p^+$
because $p^- + p^+ = p$ and the values of the permutation are constrained to the set $\mathbb{Z}_p$.
\end{proof}

\section{Main arguments}
\label{sec:arguments}

In this section we prove Theorems \ref{th:existence} and \ref{th:construct}.
First in \S\ref{sub:smp}
we map the switching manifold of $f$ backwards under $f_R$,
and relate it to the fixed point $X$ of $f_R$.
Then in \S\ref{sub:int} we
show that points where these preimages intersect $\Sigma_1$
are exactly the points $(z_n,0)$ obtained in \S\ref{sub:lemmaProofs} by mapping forwards under $f_R$.

Recall, $r = N(x) - 1$ is the number of iterations
required for the forward orbit of $(x,0)$ to enter $\overline{\Omega}_L$.
In \S\ref{sub:partition} we extend this definition to all points $(x,y) \in \mathbb{R}^2$,
and study the regions $E_r$ of constant $r$.
Together with the fixed point $X$, these sets partition the $(x,y)$-plane.
We also introduce the complement sets
$F_r = \mathbb{R}^2 \setminus (E_0 \cup E_1 \cup \cdots \cup E_{r-1})$.

Recall, $I_1 = (-\infty,0]$ and $1 \in I_{n^*}$.
In \S\ref{sub:FnStarPlus1} we characterise $I_{n^*}$
and show that $I_n = \varnothing$ for all $n = 2,3,\ldots,n^*-1$.
This is achieved by meticulously characterising $E_r$ for all $r \le n^*$.
For all $r \ge n^*+1$, $F_r$ is a polygon,
and in \S\ref{sub:mpForm} we say that it has `$\frac{m}{p}$-form' if the ordering
of its edges according to the action of $f_R^{-1}$
conforms to the rotation permutation $\cP_{m,p}$
(and some additional properties are satisfied, see Definition \ref{df:mpForm}).

In \S\ref{sub:threeCases} we take steps down the Farey web,
and show that at each step $F_p$ has $\frac{m}{p}$-form.
This enables us to prove $I_n = \varnothing$ for all values of $n$
between one value of $p$ and the next.
In \S\ref{sub:final} we complete the proof of Theorem \ref{th:existence},
then prove Theorem \ref{th:construct} by showing that our
steps are identical to those defined by the algorithm in Theorem \ref{th:construct}.

\subsection{The fixed point and switching manifold preimages}
\label{sub:smp}

If $\delta_R - \tau_R + 1 \ne 0$, then $f_R$ has the unique fixed point
\begin{equation}
X = \left( \frac{1}{\delta_R - \tau_R + 1}, \frac{-\delta_R}{\delta_R - \tau_R + 1} \right).
\label{eq:X}
\end{equation}
If also $\delta_R - \tau_R + 1 \ne 0 > 0$,
then $X \in \Omega_R$, and so $X$ is a fixed point of $f$.

The switching manifold of $f$ is the $y$-axis
\begin{equation}
\Sigma_0 = \left\{ (x,y) \in \mathbb{R}^2 \,\middle|\, x = 0 \right\},
\label{eq:Sigma0}
\end{equation}
and its image under $f$ is the $x$-axis, $\Sigma_1$.
Now suppose $\delta_R \ne 0$, so that $f_R$ is invertible.
Then for all $r \ge 1$,
\begin{equation}
\Sigma_{-r} = f_R^{-r}(\Sigma_0),
\label{eq:SigmaMinusr}
\end{equation}
is a line in the $(x,y)$-plane.

The sets
\begin{align}
U_L^r &= f_R^{-r}(\Omega_L), &
U_R^r &= f_R^{-r}(\Omega_R),
\label{eq:UrLUrRdefn}
\end{align}
are half-planes with boundary $\Sigma_{-r}$.
If $\delta_R - \tau_R + 1 > 0$, then $X \in \Omega_R$, and
\begin{align}
X &\in U_R^r \,, &
X &\notin U_L^r \,,
\label{eq:UrRUrL}
\end{align}
for all $r \ge 0$.

Let $s_r \in \mathbb{R} \cup \{ \infty \}$ denote the slope $\frac{dy}{dx}$ of $\Sigma_{-r}$.
From the formula \eqref{eq:fLfR} for $f_R$, it is a simple exercise
to derive the following recurrence relation (given also in \cite{SiGl24}).

\begin{lemma}
Suppose $\delta_R \ne 0$.
Then $s_0 = \infty$ and
\begin{equation}
s_r = \begin{cases}
\infty, & s_{r-1} = 0, \\
-\tau_R \,, & s_{r-1} = \infty, \\
\frac{-\delta_R}{s_{r-1}} - \tau_R \,, & \text{otherwise},
\end{cases}
\label{eq:slopeRecurrenceRelation}
\end{equation}
for all $r \ge 1$.
\end{lemma}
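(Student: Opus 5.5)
We prove the slope recurrence by a direct computation. We track how a line transforms under $f_R^{-1}$, using only the linear part of $f_R$ because translations do not change slopes.

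First, $\Sigma_{-r} = f_R^{-1}(\Sigma_{-(r-1)})$, with $\Sigma_0$ the $y$-axis. Hence $s_0 = \infty$. Since $\delta_R \ne 0$, $f_R$ is an affine bijection, so $\Sigma_{-r}$ is a line for every $r$. Its direction vector is $A^{-1}$ applied to a direction vector of $\Sigma_{-(r-1)}$, where
\[
A = \begin{bmatrix} \tau_R & 1 \\ -\delta_R & 0 \end{bmatrix}, \qquad
A^{-1} = \frac{1}{\delta_R} \begin{bmatrix} 0 & -1 \\ \delta_R & \tau_R \end{bmatrix}.
\]

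Next, apply $A^{-1}$ to a direction vector $(u,w)$. Up to the positive scalar $\frac{1}{\delta_R}$, which does not affect the slope, the image is $(-w,\, \delta_R u + \tau_R w)$. We then treat the three cases of the recurrence.

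\begin{itemize}
\item If $s_{r-1} = \infty$, take $(u,w) = (0,1)$. The image direction is $(-1, \tau_R)$, which has slope $-\tau_R$.
\item If $s_{r-1} = 0$, take $(u,w) = (1,0)$. The image direction is $(0, \delta_R)$. This is vertical because $\delta_R \ne 0$, so $s_r = \infty$.
\item Otherwise, take $(u,w) = (1, s_{r-1})$ with $s_{r-1} \ne 0$. The image direction is $(-s_{r-1},\, \delta_R + \tau_R s_{r-1})$. Its first component is nonzero, so the slope is
\[
\frac{\delta_R + \tau_R s_{r-1}}{-s_{r-1}} = \frac{-\delta_R}{s_{r-1}} - \tau_R .
\]
\end{itemize}

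This gives \eqref{eq:slopeRecurrenceRelation}. The only point needing care is the bookkeeping of the degenerate cases: a vertical direction maps to a nonvertical one, and a horizontal direction maps to a vertical one precisely because $\delta_R \ne 0$. No step presents a genuine obstacle.
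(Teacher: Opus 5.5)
Your computation is correct and is the direct calculation from the formula for $f_R$ that the paper calls a ``simple exercise'' (it gives no written proof), including your handling of the vertical and horizontal cases. One trivial slip: $\frac{1}{\delta_R}$ need not be positive, since only $\delta_R \ne 0$ is assumed, but any nonzero scalar leaves the slope unchanged, so nothing is affected.
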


\begin{figure}[t!]
\begin{center}
\includegraphics[width=4.8cm]{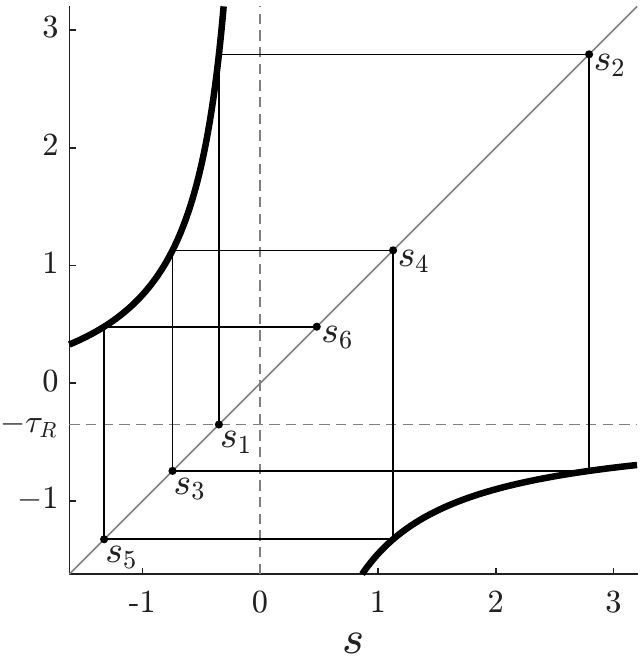}
\caption{
A cobweb diagram of the slope map \eqref{eq:slopeRecurrenceRelation}
with $\tau_R = 0.35$ and $\delta_R = 1.1$,
corresponding to point {\bf c} of Fig.~\ref{fig:bifSet}.
\label{fig:slopeMap}
} 
\end{center}
\end{figure}

The recurrence relation \eqref{eq:slopeRecurrenceRelation} is a map illustrated in Fig.~\ref{fig:slopeMap}.
In the case $\delta_R > \frac{\tau_R^2}{4}$,
we obtain from \eqref{eq:slopeRecurrenceRelation} the explicit formula
\begin{equation}
s_r = \frac{-\sqrt{\delta_R} \,\sin((r+1) \phi)}{\sin(r \phi)},
\label{eq:sr1}
\end{equation}
where $\phi$ is given by \eqref{eq:phi}.

Recall from \eqref{eq:fRr1} that $\alpha_r$ is the $x$-coefficient of $f_R^r(x,0)_1$.
The following result expresses $s_r$ in terms of these coefficients,
and is an immediate consequence of \eqref{eq:alphabeta} and \eqref{eq:sr1}.

\begin{lemma}
Suppose $\delta_R > \frac{\tau_R^2}{4}$.
Then
\begin{equation}
s_r = \begin{cases}
\infty, & \alpha_{r-1} = 0, \\
-\frac{\alpha_r}{\alpha_{r-1}}, & \text{otherwise},
\end{cases}
\label{eq:sr2}
\end{equation}
for all $r \ge 1$.
\label{le:sr2}
\end{lemma}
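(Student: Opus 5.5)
Lemma \ref{le:sr2} follows by direct substitution of the closed forms \eqref{eq:alphabeta} and \eqref{eq:sr1}. The two cases of \eqref{eq:sr2} correspond to whether $\sin(r\phi)$ vanishes.

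From \eqref{eq:alphabeta}, for every $r \ge 1$,
\begin{equation}
\alpha_{r-1} = \frac{\delta_R^{\frac{r-1}{2}} \sin(r\phi)}{\sin(\phi)},
\nonumber
\end{equation}
where $\sin(\phi) \ne 0$ because $\phi \in (0,\pi)$ when $\delta_R > \frac{\tau_R^2}{4}$. Hence $\alpha_{r-1} = 0$ if and only if $\sin(r\phi) = 0$.

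\emph{Case $\alpha_{r-1} \ne 0$.} Here $\sin(r\phi) \ne 0$, so
\begin{equation}
-\frac{\alpha_r}{\alpha_{r-1}} = -\frac{\delta_R^{\frac{r}{2}} \sin((r+1)\phi)}{\delta_R^{\frac{r-1}{2}} \sin(r\phi)} = \frac{-\sqrt{\delta_R}\,\sin((r+1)\phi)}{\sin(r\phi)},
\nonumber
\end{equation}
which is exactly \eqref{eq:sr1}.

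\emph{Case $\alpha_{r-1} = 0$.} Here $\sin(r\phi) = 0$. Then $\cos(r\phi) = \pm 1$, so $\sin((r+1)\phi) = \pm\sin(\phi) \ne 0$. Formula \eqref{eq:sr1} therefore has a nonzero numerator and zero denominator, which we interpret as $s_r = \infty$ (a vertical line $\Sigma_{-r}$).

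The only delicate point is that \eqref{eq:sr1} is stated without addressing zero denominators. I would justify that it holds in this extended sense, and justify \eqref{eq:sr1} itself, by induction on $r$ using the recurrence \eqref{eq:slopeRecurrenceRelation}.

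\emph{Base case.} Set $t_r = \frac{-\sqrt{\delta_R}\sin((r+1)\phi)}{\sin(r\phi)}$. For $r = 0$, $\sin(0) = 0$ gives $t_0 = \infty = s_0$.

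\emph{Inductive step.} The identity $2\sqrt{\delta_R}\cos(\phi) = \tau_R$ and the product formula $\sin((r+1)\phi) + \sin((r-1)\phi) = 2\sin(r\phi)\cos(\phi)$ give
\begin{equation}
\frac{-\delta_R}{t_{r-1}} - \tau_R = \frac{\sqrt{\delta_R}\,\bigl(\sin((r-1)\phi) - 2\cos(\phi)\sin(r\phi)\bigr)}{\sin(r\phi)} = t_r
\nonumber
\end{equation}
whenever $t_{r-1} \ne 0, \infty$. The special branches of the recurrence are handled as follows:
\begin{itemize}
\item $t_{r-1} = 0$ means $\sin(r\phi) = 0$, so $t_r = \infty$, matching the first branch.
\item $t_{r-1} = \infty$ means $\sin((r-1)\phi) = 0$. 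Then $\sin((r+1)\phi) = 2\cos(\phi)\sin(r\phi)$, so $t_r = -2\sqrt{\delta_R}\cos(\phi) = -\tau_R$, matching the second branch.
\end{itemize}
Thus $s_r = t_r$ for all $r \ge 0$ in the extended sense, and the two cases above complete the proof.
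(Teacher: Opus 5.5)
Your proposal is correct and follows the paper's route: the paper calls this lemma an immediate consequence of substituting \eqref{eq:alphabeta} into \eqref{eq:sr1}, and your case split on whether $\sin(r\phi)$ vanishes is exactly that substitution. Your inductive check that \eqref{eq:sr1} holds in the extended sense, including the degenerate branches of \eqref{eq:slopeRecurrenceRelation}, adds rigor the paper omits. In the $t_{r-1}=\infty$ branch you divide by $\sin(r\phi)$, so say explicitly that it is nonzero when $\sin((r-1)\phi)=0$; your argument that $\sin((r+1)\phi)=\pm\sin\phi\neq 0$ whenever $\sin(r\phi)=0$ already gives this after shifting the index.
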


\subsection{Intersections of switching manifold preimages}
\label{sub:int}

If $s_r \ne 0$, then $\Sigma_{-r}$ and $\Sigma_1$
are lines with different slopes, so intersect at a unique point.
Also $s_r \ne 0$ implies $\alpha_r \ne 0$,
so the value $z_n$ \eqref{eq:zn}, where $n = r-1$, is well-defined.

\begin{lemma}
Suppose $\delta_R \ne 0$, and let $n \ge 1$.
If $\alpha_{n-1} \ne 0$, then
\begin{equation}
\Sigma_{-(n-1)} \cap \Sigma_1 = \{ (z_n,0) \}.
\label{eq:znAsIntersection}
\end{equation}
\label{le:znAsIntersection}
\end{lemma}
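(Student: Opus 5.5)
The plan is to identify both sets in the statement directly from their definitions. By \eqref{eq:SigmaMinusr}, a point $(x,y)$ lies on $\Sigma_{-(n-1)}$ exactly when $f_R^{n-1}(x,y) \in \Sigma_0$, that is, when $f_R^{n-1}(x,y)_1 = 0$. Since $f_R$ is invertible when $\delta_R \ne 0$, this characterisation is valid for every $n \ge 1$. For $n = 1$ it is immediate: $\Sigma_0$ is the $y$-axis, and $\alpha_0 = 1$, $\beta_0 = 0$ give $z_1 = 0$.

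Intersecting with $\Sigma_1$ forces $y = 0$. The intersection therefore consists of the points $(x,0)$ with $f_R^{n-1}(x,0)_1 = 0$.

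Because $f_R$ is affine, $f_R^{n-1}(x,0)_1$ is an affine function of $x$; write it as $\alpha_{n-1} x + \beta_{n-1}$.
\begin{itemize}
\item When $\delta_R > \frac{\tau_R^2}{4}$, Lemma \ref{le:alphabeta} supplies this directly, with the explicit coefficients \eqref{eq:alphabeta}.
\item For general $\delta_R \ne 0$, the coefficients in \eqref{eq:zn} are still defined as the $x$-coefficient and constant term of this affine function. One checks by induction on $r$ that $f_R^r(x,0)$ is affine in $x$, which needs only the form \eqref{eq:fLfR}. If the paper intends $\alpha_r,\beta_r$ only via \eqref{eq:alphabeta}, then the standing assumption $\delta_R > \frac{\tau_R^2}{4}$ is implicitly in force and the lemma applies verbatim.
\end{itemize}

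Under the hypothesis $\alpha_{n-1} \ne 0$, the linear equation $\alpha_{n-1} x + \beta_{n-1} = 0$ has the unique solution $x = -\frac{\beta_{n-1}}{\alpha_{n-1}} = z_n$. This gives \eqref{eq:znAsIntersection}.

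The only mild obstacle is ensuring that $\alpha_{n-1} \ne 0$ is the right nondegeneracy condition, i.e.\ that $\Sigma_{-(n-1)}$ is not parallel to $\Sigma_1$. This is automatic from the argument above: the solution set of a nondegenerate affine equation in $x$ is a single point, so no separate appeal to the slopes in Lemma \ref{le:sr2} is needed.
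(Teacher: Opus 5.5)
Your proof is correct. It obtains uniqueness differently from the paper.

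The paper argues in two steps. First, via the remark preceding the lemma and the slope formula of Lemma~\ref{le:sr2}, it concludes that $\Sigma_{-(n-1)}$ is not parallel to $\Sigma_1$, so the two lines meet in exactly one point. Second, it observes that $(z_n,0)$ lies on both lines by definition.

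You instead compute the intersection set outright as $\{(x,0) : \alpha_{n-1}x+\beta_{n-1}=0\}$. Existence and uniqueness then both follow from a single nondegenerate affine equation, and you never need to translate $\alpha_{n-1}\neq 0$ into a statement about slopes.

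Your route is more economical and more general. It uses only $\delta_R\neq 0$ and the affine form of $f_R^{n-1}(x,0)_1$, which is exactly how $z_n$ is introduced (as the unique root of that function). The paper's appeal to Lemma~\ref{le:sr2} implicitly brings in the focus condition $\delta_R>\frac{\tau_R^2}{4}$.

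The paper's route, in turn, frames the intersection in terms of slopes. That is the viewpoint it needs immediately afterwards in Lemma~\ref{le:intersectionFormula}, where $\alpha_{n-1}=0$ is converted into $s_{n-1}=0$ to show that the corresponding preimage lines are parallel and disjoint.
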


\begin{proof}
As noted above, $\Sigma_{-(n-1)}$ and $\Sigma_1$ intersect at a unique point.
By definition, $(z_n,0) \in \Sigma_1$ maps under $f_R^{n-1}$ to $\Sigma_0$,
so $(z_n,0) \in \Sigma_{-(n-1)}$.
\end{proof}

If $\alpha_{n-1} \ne 0$, write
\begin{equation}
Q_n^j = f_R^{-j}(z_n,0),
\label{eq:Qnj}
\end{equation}
for all $j \ge 0$.
Notice
\begin{equation}
Q_n^1 = (0,z_n - 1),
\label{eq:Qn1}
\end{equation}
in view of the formula \eqref{eq:fLfR} for $f_R$.
By iterating the elements of \eqref{eq:znAsIntersection} under $f_R^{-1}$,
we can characterise the intersection of any two switching manifold preimages.
Here we assume $f_R$ corresponds to a repelling focus
so that switching manifold preimages cannot coincide.

\begin{lemma}
Suppose $\delta_R > 1$ and $\delta_R > \frac{\tau_R^2}{4}$. 
Let $j \ge 0$ and $n \ge 1$.
The lines $\Sigma_{-j+1}$ and $\Sigma_{-n-j+1}$ intersect if and only if $\alpha_{n-1} \ne 0$.
If $\alpha_{n-1} \ne 0$, then
\begin{equation}
\Sigma_{-(n+j-1)} \cap \Sigma_{-(j-1)} = \left\{ Q_n^j \right\}.
\label{eq:intersectionFormula}
\end{equation}
\label{le:intersectionFormula}
\end{lemma}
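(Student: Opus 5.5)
The idea is to pull everything back by $f_R^{-(j-1)}$ (or push forward by $f_R^{j-1}$) and reduce to Lemma \ref{le:znAsIntersection}. Since $\delta_R \ne 0$, $f_R$ is an invertible affine map, so it is a bijection of the plane taking lines to lines. By \eqref{eq:SigmaMinusr}, $f_R^{-(j-1)}(\Sigma_{-(n-1)}) = \Sigma_{-(n+j-2)}$. Care is needed with indexing: the lines in the statement are $\Sigma_{-j+1}=\Sigma_{-(j-1)}$ and $\Sigma_{-n-j+1}=\Sigma_{-(n+j-1)}$. For $j=0$ the first of these is $\Sigma_1$, the $x$-axis, which is not of the form $f_R^{-r}(\Sigma_0)$ but satisfies $\Sigma_1 = f_R(\Sigma_0)$ by \eqref{eq:fLfR}, since $f_R(0,y)=(y+1,0)$. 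So in all cases we may write $\Sigma_{-(j-1)} = f_R^{-j}(\Sigma_1)$ and $\Sigma_{-(n+j-1)} = f_R^{-j}(\Sigma_{-(n-1)})$. The second identity is immediate from \eqref{eq:SigmaMinusr} when $j\ge 1$ or $n \ge 1$; the index bookkeeping should be checked separately for $n=1$.

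Because $f_R^{-j}$ is a bijection,
\[
\Sigma_{-(n+j-1)} \cap \Sigma_{-(j-1)} = f_R^{-j}\big( \Sigma_{-(n-1)} \cap \Sigma_1 \big).
\]
This reduces the statement to the case $j=0$, namely: $\Sigma_{-(n-1)}$ meets $\Sigma_1$ if and only if $\alpha_{n-1}\ne 0$, and in that case the intersection is $\{(z_n,0)\}$. Applying $f_R^{-j}$ then gives $\{Q_n^j\}$ by \eqref{eq:Qnj}.

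For the case $j=0$, if $\alpha_{n-1}\ne0$ then Lemma \ref{le:znAsIntersection} gives the intersection directly. Conversely, suppose $\alpha_{n-1}=0$. A point $(x,0)$ lies on $\Sigma_{-(n-1)}$ exactly when $f_R^{n-1}(x,0)_1=0$, which by Lemma \ref{le:alphabeta} reads $\alpha_{n-1}x+\beta_{n-1}=0$, i.e.\ $\beta_{n-1}=0$. So $\Sigma_{-(n-1)}$ is either disjoint from $\Sigma_1$ (parallel) or equal to it, and we must rule out $\beta_{n-1}=0$.

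Excluding $\beta_{n-1}=0$ is the main obstacle, and it is where $\delta_R>1$ enters. Suppose $\alpha_{n-1}=\beta_{n-1}=0$. Then $\sin(n\phi)=0$, so by \eqref{eq:alphabeta} $\beta_{n-1}$ reduces to a multiple of $\sin\phi+\delta_R^{n/2}\sin((n-1)\phi)$. With $\sin(n\phi)=0$ we have $\sin((n-1)\phi)=-\cos(n\phi)\sin\phi=\mp\sin\phi$. Hence the numerator becomes $\sin\phi\,(1\mp\delta_R^{n/2})$, which is nonzero because $\sin\phi\ne0$ and $\delta_R^{n/2}>1$ for $n\ge1$ (the case $n=1$ gives $\alpha_0=1\neq0$ anyway). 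Geometrically, $\Sigma_{-(n-1)}=\Sigma_1$ would force $\Sigma_1$ to be invariant under $f_R^{n}$, which is impossible for a repelling focus with $n\phi\in\pi\mathbb{Z}$ unless $\delta_R=1$. This completes the case $j=0$, and hence the general case.
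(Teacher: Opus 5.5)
Your proof is correct. The direction $\alpha_{n-1}\ne0$ is the same as the paper's: apply $f_R^{-j}$ to Lemma~\ref{le:znAsIntersection}.

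For $\alpha_{n-1}=0$ you take a different route. The paper uses Lemma~\ref{le:sr2} to get $s_{n-1}=0$, so $\Sigma_{-(n-1)}$ is parallel to $\Sigma_1$, and transports this parallelism by $f_R^{-j}$. It then asserts that the lines cannot coincide because $f_R$ has a repelling focus off the switching manifold. You instead read off from Lemma~\ref{le:alphabeta} that $\Sigma_1\cap\Sigma_{-(n-1)}$ is either empty or all of $\Sigma_1$, according to whether $\beta_{n-1}\neq0$. You then compute that, when $\sin(n\phi)=0$, the numerator of $\beta_{n-1}$ equals $\sin\phi\,(1\mp\delta_R^{n/2})$, which is nonzero.

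Your version is self-contained and shows exactly where the hypothesis enters. In fact only $\delta_R\neq1$ is needed, since the case $\cos(n\phi)=-1$ is automatic. The paper's version is shorter but leaves non-coincidence as a qualitative claim. Your closing geometric remark is what justifies that claim: when $\sin(n\phi)=0$ the linear part of $f_R^n$ is $\pm\delta_R^{n/2}$ times the identity. So $f_R^n$ is a homothety about $X$ with ratio $\pm\delta_R^{n/2}$, and it cannot leave invariant a line that misses $X$.

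One cosmetic point: since $n\ge1$, the identity $\Sigma_{-(n+j-1)}=f_R^{-j}(\Sigma_{-(n-1)})$ holds directly for every $j\ge0$, reading $\Sigma_{-0}$ as $\Sigma_0$. No separate index check for $n=1$ is needed.
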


\begin{proof}
If $\alpha_{n-1} \ne 0$,
then \eqref{eq:intersectionFormula} follows immediately from \eqref{eq:znAsIntersection}
by iterating $j$ times under $f_R^{-1}$.
If $\alpha_{n-1} = 0$, then $s_{n-1} = 0$ (see Lemma \ref{le:sr2}),
so $\Sigma_{-(n-1)}$ is parallel to $\Sigma_1$,
and hence $\Sigma_{-(n+j-1)}$ is parallel to $\Sigma_{-(j-1)}$.
These lines cannot coincide because $f_R$ is affine
with a repelling focus fixed point that does not belong to the switching manifold,
thus $\Sigma_{-(n+j-1)} \cap \Sigma_{-(j-1)} = \varnothing$.
\end{proof}

\subsection{A partition of the plane}
\label{sub:partition}

Given $(x,y) \in \mathbb{R}^2$,
let $\chi(x,y)$ be the smallest $r \ge 0$
for which $f^r(x,y) \in \overline{\Omega}_L$.
For points with $y = 0$, this connects to the definition of the return time $N(x)$.
Specifically, if $\delta_R \ne 0$, then
\begin{equation}
\chi(x,0) = N(x) - 1,
\label{eq:chiToN}
\end{equation}
for all $x \in \mathbb{R}$.

As in \cite{SiGl24}, we consider the regions
\begin{equation}
E_r = \left\{ (x,y) \in \mathbb{R}^2 \,\middle|\, \chi(x,y) = r \right\}.
\label{eq:Er}
\end{equation}
Writing $\tilde{\Sigma}_1$ for the $x \le 1$ part of $\Sigma_1$, we have
\begin{equation}
I_n \times \{ 0 \} = E_{n-1} \cap \tilde{\Sigma}_1 \,,
\label{eq:In2}
\end{equation}
for all $n \ge 1$, by \eqref{eq:chiToN}. 
The intersections \eqref{eq:In2} are indicated Fig.~\ref{fig:divH}
for parameter point {\bf c} of Fig.~\ref{fig:bifSet}.

\begin{figure}[b!]
\begin{center}
\includegraphics[width=9cm]{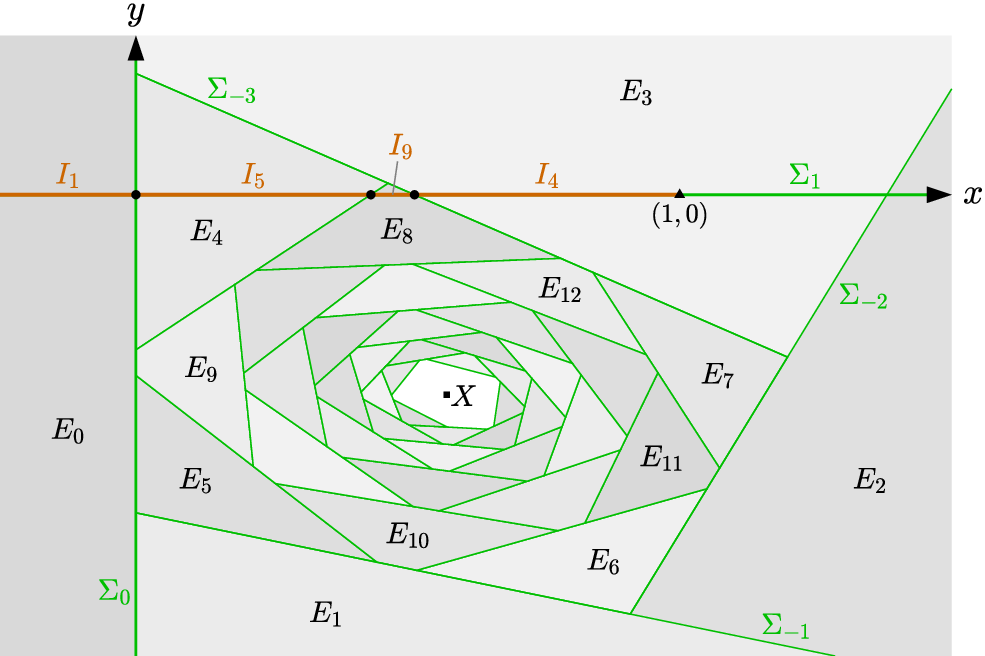}
\caption{
The sets $E_r$ using $\tau_R = 0.35$ and $\delta_R = 1.1$,
which corresponds to point {\bf c} of Fig.~\ref{fig:bifSet}.
We also indicate the non-empty intervals $I_n$,
and the repelling focus fixed point $X$.
\label{fig:divH}
} 
\end{center}
\end{figure}

Now let $F_0 = \mathbb{R}^2$, and
\begin{equation}
F_r = \mathbb{R}^2 \setminus (E_0 \cup E_1 \cup \cdots \cup E_{r-1}),
\label{eq:Fr}
\end{equation}
for all $r \ge 1$.
Notice $F_1 = \Omega_R$ because $E_0 = \overline{\Omega}_L$.
Using \eqref{eq:UrLUrRdefn}, we have the equivalent definition
\begin{equation}
F_r = U_R^0 \cap U_R^1 \cap \cdots \cap U_R^{r-1} \,.
\label{eq:FrU}
\end{equation}
assuming if $\delta_R \ne 0$.

\begin{lemma}
The sets $E_r$ and $F_r$ are convex.
\label{le:convexity}
\end{lemma}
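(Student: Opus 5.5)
We need to prove that $E_r$ and $F_r$ are convex. The key point is that both sets are built from half-planes, which are convex.

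We start with $F_r$. When $\delta_R \ne 0$, the map $f_R$ is an invertible affine map. Hence each $U_R^j = f_R^{-j}(\Omega_R)$ is an open half-plane, the preimage of a half-plane under an affine bijection. By \eqref{eq:FrU}, $F_r$ is a finite intersection of such half-planes, so it is convex (it is $\mathbb{R}^2$ when $r=0$).

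We next show $E_r = F_r \cap f_R^{-r}(\overline{\Omega}_L)$. A point with $\chi(x,y)=r$ satisfies $f^i(x,y)\in\Omega_R$ for $i<r$. On $\Omega_R$ the map $f$ agrees with $f_R$, so $f^r(x,y)=f_R^r(x,y)$, and this lies in $\overline{\Omega}_L$. The converse holds by the same reasoning. The set $f_R^{-r}(\overline{\Omega}_L)$ is a closed half-plane, again because $f_R^r$ is an affine bijection. So $E_r$ is an intersection of convex sets and is convex.

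The case $\delta_R = 0$ needs separate treatment, since then $f_R$ is not invertible and \eqref{eq:FrU} was stated only for $\delta_R\ne0$. We avoid inverses and argue directly, as in the proof of Lemma \ref{le:InInterval}. Take $w_1, w_2 \in F_r$ and a convex combination $w=(1-s)w_1+sw_2$. By induction on $i$, using that $f$ is affine on the convex set $\Omega_R$, we get $f^i(w) = (1-s)f^i(w_1)+sf^i(w_2)$ and $f^i(w)\in\Omega_R$ for all $i \le r-1$. Hence $w\in F_r$. If in addition $w_1,w_2\in E_r$, the same identity holds at $i=r$, so $f^r(w)$ is a convex combination of two points of $\overline{\Omega}_L$ and therefore lies in $\overline{\Omega}_L$; thus $w\in E_r$. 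This argument in fact covers every $\delta_R$ at once, so we would present it as the main proof.

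No step here is a real obstacle. The only care needed is to note that the affine identity is applied only while the iterates remain in $\Omega_R$: the induction must establish membership in $\Omega_R$ at step $i$ before using affinity at step $i+1$.
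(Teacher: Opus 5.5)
Your proof is correct. Your first two paragraphs are exactly the paper's argument: the paper writes $F_r$ as the intersection of the half-planes $U_R^0,\ldots,U_R^{r-1}$ via \eqref{eq:FrU}, and $E_r$ as $F_r$ intersected with the closure of $U_L^r$. For invertible affine $f_R$, that closure is your closed half-plane $f_R^{-r}(\overline{\Omega}_L)$.

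The convex-combination induction you propose as the main proof is a genuinely different route. It works directly from the dynamics and never inverts $f_R$, so it also covers $\delta_R = 0$. There \eqref{eq:FrU} is not asserted, and the paper's two-line proof says nothing, even though the lemma is stated without hypotheses. The cost is a short induction in place of a one-line appeal to half-planes. Your argument does use the characterisation $F_r = \{P : f^j(P) \in \Omega_R,\ j=0,\ldots,r-1\}$, which you should state explicitly. $F_r$ is defined as a complement, so it also contains points such as $X$ for which $\chi$ is undefined.

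The paper's route in fact also survives at $\delta_R = 0$ if $f_R^{-j}(\cdot)$ is read as a set-theoretic preimage. The identities $F_r = \bigcap_{j<r}(f_R^j)^{-1}(\Omega_R)$ and $E_r = F_r \cap (f_R^r)^{-1}(\overline{\Omega}_L)$ still hold, and preimages of convex sets under affine maps are convex (possibly empty or all of $\mathbb{R}^2$). The paper only invokes the lemma with $\delta_R > 1$, so the extra generality is not needed there.
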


\begin{proof}
By \eqref{eq:FrU}, $F_r$ is the intersection of convex sets, thus is convex.
Moreover, $E_r$ is the intersection of $F_r$ and the closure of $U_L^r$, so is convex.
\end{proof}

The next result shows that each $E_r$ and $F_r$
result from mapping $E_{r-1}$ and $F_{r-1}$ backwards under $f_R$,
after first removing all points on or above $\Sigma_1$.

\begin{lemma}
Suppose $\delta_R > 0$.
Then
\begin{align}
E_r &= \left\{ f_R^{-1}(P) \,\middle|\, P \in E_{r-1}, P_2 < 0 \right\}, \label{eq:Er4} \\
F_r &= \left\{ f_R^{-1}(P) \,\middle|\, P \in F_{r-1}, P_2 < 0 \right\}, \label{eq:Fr4}
\end{align}
for all $r \ge 1$.
\label{le:ErFr}
\end{lemma}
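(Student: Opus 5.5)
We prove both identities by a direct two-way inclusion from the definition of $\chi$, using that $f_R$ is invertible because $\delta_R > 0$. First we record the effect of $f_R$ on the sign of the second coordinate. For any point $Q$ we have $f_R(Q)_2 = -\delta_R Q_1$. With $\delta_R > 0$, this gives $f_R(Q)_2 < 0$ if and only if $Q_1 > 0$, that is, $Q \in \Omega_R$. Equivalently, for any $P$, the preimage $f_R^{-1}(P)$ lies in $\Omega_R$ exactly when $P_2 < 0$.

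For \eqref{eq:Er4}, fix $r \ge 1$ and take $Q \in E_r$. Since $\chi(Q) = r \ge 1$, the point $Q$ is not in $\overline{\Omega}_L$, so $Q \in \Omega_R$ and $f(Q) = f_R(Q)$. Let $P = f_R(Q)$. Then $P_2 < 0$ by the observation above, and $Q = f_R^{-1}(P)$. The orbit of $P$ is the orbit of $Q$ shifted by one step, so $\chi(P) = \chi(Q) - 1 = r-1$ and $P \in E_{r-1}$.

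Conversely, take $P \in E_{r-1}$ with $P_2 < 0$, and let $Q = f_R^{-1}(P)$. Then $Q_1 = -P_2/\delta_R > 0$, so $Q \in \Omega_R$. Hence $f(Q) = f_R(Q) = P$, and $Q \notin \overline{\Omega}_L$. It follows that $\chi(Q) = 1 + \chi(P) = r$, so $Q \in E_r$.

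Identity \eqref{eq:Fr4} follows by the same argument. Here $F_r$ is the set of points with $\chi \ge r$, including points whose orbit never enters $\overline{\Omega}_L$, for which $\chi$ is undefined and treated as $\infty$. For $r \ge 1$, any $Q \in F_r$ has $\chi(Q) \ge 1$, so $Q \in \Omega_R$ and $P = f_R(Q)$ satisfies $P_2 < 0$ and $\chi(P) \ge r-1$. Conversely, if $P \in F_{r-1}$ with $P_2 < 0$, then $Q = f_R^{-1}(P) \in \Omega_R$ and $\chi(Q) = 1 + \chi(P) \ge r$, with the convention $1 + \infty = \infty$. Alternatively, $F_r$ is the union of $E_j$ over $j \ge r$ together with the points of infinite $\chi$, and \eqref{eq:Fr4} follows by taking the union of \eqref{eq:Er4} over $j \ge r$ plus the infinite case.

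Each step is elementary. The only points needing care are that $r \ge 1$ forces $Q$ into the open half-plane $\Omega_R$, so that $f = f_R$ there, and the treatment of orbits that never enter $\overline{\Omega}_L$ (for example the fixed point $X$) in the $F_r$ case.
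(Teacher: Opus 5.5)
Your proof is correct and follows essentially the same route as the paper. Both rest on the one-step orbit shift $\chi(f_R(Q)) = \chi(Q)-1$ for $Q \in \Omega_R$ and on the equivalence $Q_1 > 0 \iff f_R(Q)_2 < 0$, which holds because $\delta_R > 0$. Your explicit handling of the $F_r$ identity, including points such as $X$ whose orbits never enter $\overline{\Omega}_L$, fills in what the paper leaves as following ``in the same fashion.''
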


\begin{proof}
We just derive \eqref{eq:Er4},
as \eqref{eq:Fr4} can be obtained in the same fashion.
By definition, $E_r$ is the set of all $Z \in \mathbb{R}^2$
for which $Z, f_R(Z), \ldots, f_R^{r-1}(Z) \in \Omega_R$,
and $f_R^r(Z) \in \overline{\Omega}_L$.
Notice $Z \in \Omega_R$ is equivalent to $Z_1 > 0$,
and write $P = f_R(Z)$.
Then $E_r$ is the set of all $Z \in \mathbb{R}^2$
for which $Z_1 > 0$ and $P, \ldots, f_R^{r-2}(P) \in \Omega_R$,
and $f_R^{r-1}(P) \in \overline{\Omega}_L$.
That is, $Z_1 > 0$ and $P \in E_{r-1}$.
But $Z_1 > 0$ is equivalent to $P_2 < 0$, because $\delta_R > 0$, thus we have \eqref{eq:Er4}.
\end{proof}

We now show that the sets $E_r$, together with $\{ X \}$, form a partition of the $(x,y)$-plane.

\begin{lemma}
Suppose $\delta_R > \frac{\tau_R^2}{4}$.
The sets $\{ X \}, E_0, E_1, \ldots$ are all non-empty and form a disjoint union of $\mathbb{R}^2$.
\label{le:partition}
\end{lemma}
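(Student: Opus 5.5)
The plan is to prove the three assertions of Lemma \ref{le:partition} separately: the sets are pairwise disjoint, every point other than $X$ lies in some $E_r$, and each of $\{X\}$ and every $E_r$ is non-empty.

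\textbf{Disjointness.} Each $E_r$ is a level set of the function $\chi$, so the $E_r$ are pairwise disjoint. The point $X$ lies in no $E_r$ for the following reason. The hypothesis $\delta_R > \frac{\tau_R^2}{4}$ gives $\delta_R - \tau_R + 1 > 0$, since $\delta_R - \tau_R + 1 > \left(\frac{\tau_R}{2} - 1\right)^2 \ge 0$. So $X$ is defined by \eqref{eq:X} and $X \in \Omega_R$. Because $X$ is fixed by $f_R$, its forward orbit under $f$ never enters $\overline{\Omega}_L$, and hence $\chi(X)$ is undefined.

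\textbf{Covering.} Take $Z \ne X$ and suppose, for contradiction, that $f^r(Z) \in \Omega_R$ for all $r \ge 0$. Then $f^r(Z) = f_R^r(Z)$, so $f_R^r(Z) - X = A^r (Z - X)$, where $A$ is the linear part of $f_R$. Since $\delta_R > \frac{\tau_R^2}{4}$, the matrix $A$ has complex eigenvalues $\sqrt{\delta_R}\,\re^{\pm \ri \phi}$ with $\phi \in (0,\pi)$ given by \eqref{eq:phi}. In a suitable real basis $A$ is $\sqrt{\delta_R}$ times a rotation by $\phi$.

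Concretely, the first component is $f_R^r(Z)_1 - X_1 = c\,\delta_R^{r/2} \sin(r\phi + \theta)$ for some constants $c \neq 0$ and $\theta$. Here $c \ne 0$ because $Z \ne X$ and the first-coordinate functional cannot vanish on a whole $A$-orbit of a nonzero vector: that would make an eigenvector real. Since $\phi \in (0,\pi)$, consecutive angles $r\phi + \theta$ advance by less than $\pi$. Therefore $\sin(r\phi + \theta) \le -\sin(\min(\phi, \pi - \phi))$ for infinitely many $r$; indeed, within every window of consecutive angles covering a full turn, some angle lands in the arc where $\sin$ is at most this negative constant.

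This step is the main obstacle, and the bound must be stated carefully. When $\delta_R \ge 1$ the amplitude does not decay, so $f_R^r(Z)_1 \le X_1 - |c|\,\kappa$ along a subsequence, where $\kappa = \sin(\min(\phi,\pi-\phi))$. This is negative once $|c|\kappa > X_1$, which is not automatic for small $c$.

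A cleaner route avoids amplitude estimates altogether. Among the iterates there is some $r$ with $\sin(r\phi + \theta) < 0$ and some with $\sin(r\phi+\theta)>0$. More usefully, the orbit of $Z - X$ rotates around the origin, so the iterates cannot all remain in a half-plane that contains the origin in its interior in the relevant sense.

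I would instead argue via Lemma \ref{le:ErFr} and convexity, as follows. Suppose $F = \bigcap_{r} F_r$ contained a point $Z \ne X$. The set $F$ is convex and forward invariant under $f_R$, and it contains $X$, since $X \in U_R^r$ for all $r$ by \eqref{eq:UrRUrL}. Hence $F$ contains the segment from $X$ to $f_R^r(Z)$ for every $r$. The directions of $f_R^r(Z) - X$ rotate by $\phi \in (0,\pi)$ at each step, so they are not contained in any closed half-plane of directions. Since $|f_R^r(Z)-X|$ is bounded below when $\delta_R \ge 1$, the convex hull of these segments contains a disc about $X$ of radius $\rho>0$, and this disc lies in $F$. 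Applying $f_R$, which expands every vector by a factor at least $\sqrt{\delta_R}$ in an adapted norm, forward invariance gives discs of radius growing without bound inside $F \subset \Omega_R$, a contradiction. When $\delta_R < 1$ the growth fails, and I would need a separate argument; this is the case I expect to require extra care.

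\textbf{Non-emptiness.} Since every $E_r$ is a level set of $\chi$, it suffices to find, for each $r$, a point $Z$ with $\chi(Z) = r$. Take a point $P \in \Omega_L$ and let $Z = f_R^{-r}(P)$; it remains to ensure that the intermediate points $f_R^{-k}(P)$, for $1 \le k \le r$, lie in $\Omega_R$. I would choose $P$ close to $\Sigma_0$ and just left of it, near a suitable point such as $Q_n^1$, so that its backward orbit stays right of $\Sigma_0$.

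A neater choice is to take $P$ close to $X$ but with $P_1 \le 0$. This needs $X$ to be close to the switching manifold, which fails in general. The fallback is to use \eqref{eq:Er4} inductively: if $E_{r-1}$ is non-empty and contains a point with $P_2 < 0$, then $E_r$ is non-empty. Thus the core task is to show that every $E_{r-1}$ meets the open lower half-plane, and I would verify this by tracking how $E_{r-1}$ meets the half-planes $U_L^{r-1}$.
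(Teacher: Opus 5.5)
\paragraph{Covering and the hypothesis on $\delta_R$.} Your disjointness argument is fine, and your convex-hull argument for covering is valid when $\delta_R>1$. However, the covering claim is \emph{false} when $\delta_R\le 1$, so the ``separate argument'' you plan for $\delta_R<1$ cannot exist. In coordinates where the linear part of $f_R$ is $\sqrt{\delta_R}$ times a rotation, take a small disc (in those coordinates) about $X$ lying in $\Omega_R$. For $\delta_R\le 1$, $f_R$ maps this disc into itself. Its points therefore never reach $\overline{\Omega}_L$ and belong to no $E_r$.

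The paper's proof tacitly uses $\delta_R>1$ (``$X$ is a repelling focus''), which is the setting in which the lemma is applied, and you need that hypothesis too. Your claim that the discs grow without bound for $\delta_R\ge 1$ is also wrong at $\delta_R=1$.

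Once $\delta_R>1$ is assumed, your first, direct route already works: the amplitude $|c|\,\delta_R^{r/2}$ tends to infinity, so a small $|c|$ is no obstacle. You do need a correct constant, though. With $\kappa=\sin(\min(\phi,\pi-\phi))$, the arc where $\sin\le-\kappa$ has length $|\pi-2\phi|$. This can be shorter than the step $\phi$ and can be skipped entirely; for example, $\phi=4\pi/9$ and $\theta=\pi/18$ give angles $(1+4k)\pi/18$, none of which lies in $[13\pi/9,14\pi/9]$. Taking $\kappa=\cos(\phi/2)$ gives a closed arc of length exactly $\phi$, which an arithmetic progression with step $\phi$ cannot skip.

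\paragraph{Non-emptiness.} This is the genuine gap: you never prove that each $E_r$ is non-empty. Your backward-iteration plan ends with the unproved claim that every $E_{r-1}$ meets $\{y<0\}$.

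The missing idea is to iterate \emph{forward from near $X$}, rather than backward from $\Omega_L$. Since $X\in\Omega_R$ is fixed and $\Omega_R$ is open, continuity gives a neighbourhood $V$ of $X$ with $f_R^j(V)\subset\Omega_R$ for $j=0,1,\ldots,r$. Any $P\in V\setminus\{X\}$ lies in some $E_s$ by the covering part, and necessarily $s>r$. Then $f^j(P)=f_R^j(P)\in\Omega_R$ for $j<s$, so $\chi\bigl(f_R^{s-r}(P)\bigr)=r$, that is, $f_R^{s-r}(P)\in E_r$. This is exactly the paper's argument, and it makes your backward-orbit bookkeeping unnecessary.
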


\begin{proof}
The sets $E_r$ are disjoint by definition.
The forward orbit of $X$ remains at $X$, so does not belong to any set $E_r$.
The forward orbit of other point in $\mathbb{R}^2$
escapes $\Omega_R$, because $f_R$ is affine and $X$ is a repelling focus, thus belongs to some set $E_r$.
Thus $\{ X \}, E_0, E_1, \ldots$ form a disjoint union of $\mathbb{R}^2$.

Suppose for a contradiction $E_r = \varnothing$ for some $r \ge 0$.
By the continuity of $f_R$ at $X$,
there exists $P \in \mathbb{R}^n$ (near $X$) such that $P \in E_s$ with some $s > r$.
But then $f_R^{s-r}(P) \in E_r$, which is a contradiction.
\end{proof}

\subsection{A characterisation of $I_{n^*}$ and $F_{n^*+1}$}
\label{sub:FnStarPlus1}

Recall, $n^* = N(1)$, by definition, \eqref{eq:nStar}.
So $\chi(1,0) = n^* - 1$, by \eqref{eq:chiToN},
and hence $(1,0) \in E_{n^*-1}$.
Thus, by \eqref{eq:In2}, $I_{n^*}$ consists of all $x \in (0,1]$
for which $(x,0) \in E_{n^*-1}$.

By characterising the sets $E_1, E_2, \ldots, E_{n^*-2}$,
we are now able to show that $I_2 = I_3 = \cdots = I_{n^*-1} = \varnothing$, Proposition \ref{pr:nStar}(i).
By then characterising $E_{n^*-1}$, we are able to prove that $z_{n^*}$ is well-defined
and is the left-most point of $I_{n^*}$, Proposition \ref{pr:nStar}(ii).
By also characterising $E_{n^*}$,
we can show via \eqref{eq:Fr} that $F_{n^*+1}$ is a polygon with certain properties,
Proposition \ref{pr:nStar}(iii).

\begin{proposition}
Suppose $\delta_R > 1$ and $\delta_R > \frac{\tau_R^2}{4}$.
\begin{enumerate}[label=\roman*),ref=\roman*,itemsep=0mm]
\item 
$I_n = \varnothing$ for all $n = 2,3,\ldots,n^*-1$.
\item
The value $z_{n^*}$ is well-defined, belongs to $(0,1]$, and $I_{n^*} = \left[ z_n^*, 1 \right]$.
\item
The set $F_{n^*+1}$ is the interior of the convex polygon
with vertices $Q_1^1, Q_1^2, \ldots, Q_1^{n^*}, Q_{n^*}^1$ ordered anticlockwise.
The vertices $Q_1^1, Q_1^2, \ldots, Q_1^{n^*-1}$ lie below $\Sigma_1$;
the vertex $Q_{n^*}^1$ lies on or below $\Sigma_1$.
\end{enumerate}
\label{pr:nStar}
\end{proposition}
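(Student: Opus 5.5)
The plan is an induction that describes $E_r$ explicitly for $r = 0,1,\ldots,n^*$, using the recursion of Lemma~\ref{le:ErFr}. Every set $E_r$ is obtained by removing the points on or above $\Sigma_1$ from $E_{r-1}$ and then applying $f_R^{-1}$. The $x$-axis $\Sigma_1$ pulls back under $f_R^{-1}$ to $\Sigma_0$; more generally $f_R^{-j}(\Sigma_1) = \Sigma_{-(j-1)}$. Hence every boundary of $E_r$ and $F_r$ lies on one of the lines $\Sigma_0, \Sigma_{-1}, \ldots, \Sigma_{-r}$. The corners are intersections of such lines, and Lemma~\ref{le:intersectionFormula} identifies them as the points $Q_n^j$. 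The case $n=1$ gives $Q_1^j = f_R^{-j}(z_1,0)$ with $z_1 = 0$ (since $\alpha_0 = 1$ and $\beta_0 = 0$). So $Q_1^1 = (0,-1)$, and the points $Q_1^j$ are the backward orbit of the origin.

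The inductive claim is as follows. For $1 \le r \le n^*-1$, $F_{r+1}$ is the convex region bounded by the vertices $Q_1^1, \ldots, Q_1^r$ together with two unbounded edges, one on $\Sigma_0$ and one on $\Sigma_{-r}$. Moreover, $Q_1^1, \ldots, Q_1^{r}$ lie strictly below $\Sigma_1$.

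For the base case, $F_1 = \Omega_R$. Cutting by $y<0$ and pulling back gives $F_2 = \Omega_R \cap U_R^1$, a wedge with vertex $Q_1^1 = (0,-1)$ bounded by $\Sigma_0$ and $\Sigma_{-1}$.

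For the inductive step, the key point is where $\Sigma_1$ cuts the boundary of $F_{r}$. It meets the edge on $\Sigma_0$ at the origin. It meets the edge on $\Sigma_{-(r-1)}$ at $(z_r,0)$, provided $\alpha_{r-1} \ne 0$ (Lemma~\ref{le:znAsIntersection}). The relevant question is whether $(1,0)$ lies in $F_r$. Since $\chi(1,0) = n^*-1 \ge r$, it does, so the segment of $\Sigma_1$ from the origin to beyond $x=1$ lies in $F_r$. Then the part of the boundary of $E_{r-1}$ lying on $\Sigma_1$ must be at $x > 1$, or absent altogether. This is exactly the mechanism that gives $\tilde\Sigma_1 \cap E_{r-1} = \varnothing$, which proves part (i).

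For the vertex positions, I will use that $X$ is a repelling focus. Backward iterates under $f_R^{-1}$ contract and rotate about $X$. Combined with $X_2 < 0$, and with the fact that $(1,0)$ has not yet escaped, this should keep $Q_1^j$ below $\Sigma_1$ for $j \le n^*-1$. Concretely, I will check $(Q_1^{j+1})_1 > 0 \iff (Q_1^j)_2 < 0$ using the second component of $f_R$, and argue by the anticlockwise ordering that the polygon remains convex.

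At $r = n^*-1$, the line $\Sigma_1$ passes through $(1,0) \in E_{n^*-1}$. Convexity of $E_{n^*-1}$ (Lemma~\ref{le:convexity}) and the interval structure of Lemma~\ref{le:InInterval} then show that $\tilde\Sigma_1 \cap E_{n^*-1}$ is a segment ending at $x=1$. Its left end is where $\Sigma_1$ enters $E_{n^*-1}$ across $\Sigma_{-(n^*-1)}$, namely $(z_{n^*},0)$. For this we need $\alpha_{n^*-1} \neq 0$, that is, $\Sigma_{-(n^*-1)}$ is not horizontal. If it were horizontal, the parallel-line argument in the proof of Lemma~\ref{le:intersectionFormula} would give $(1,0) \notin E_{n^*-1}$, a contradiction. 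The bound $z_{n^*} > 0$ holds because the origin lies on $\partial\Omega_R$ and the points of $\Sigma_1$ just to its right have $\chi = 1 \ne n^*-1$ unless $n^*=2$; in the case $n^*=2$ one checks directly that $z_2 = -1/\tau_R > 0$. This proves part (ii).

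Finally, for part (iii), remove from $F_{n^*}$ the part on or above $\Sigma_1$. The cut runs along $\Sigma_1$ from $(0,0)$ to $(z_{n^*},0)$, which closes off the two unbounded edges. Pulling back by $f_R^{-1}$ gives a bounded polygon. Its vertices are the old ones shifted, $Q_1^{j} \mapsto Q_1^{j+1}$, together with $f_R^{-1}(0,0) = Q_1^1$ and $f_R^{-1}(z_{n^*},0) = Q_{n^*}^1$. In total this is $n^*+1$ vertices, in anticlockwise order because $f_R^{-1}$ preserves orientation (since $\delta_R>0$). The position of $Q_{n^*}^1 = (0, z_{n^*}-1)$ on or below $\Sigma_1$ follows from $z_{n^*} \le 1$, by \eqref{eq:Qn1}.

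The main obstacle is the inductive step: controlling exactly which portion of $\Sigma_1$ cuts $F_r$. In particular I must exclude the possibility that $\Sigma_1$ meets the polygon along an interior edge. I would first try to settle this purely from the fact that $(1,0) \in F_r$, together with convexity and the location of the origin.
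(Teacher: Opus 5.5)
There is a genuine gap, and it is exactly at the obstacle you flag. The inductive hypothesis you plan to carry is false in general. $F_{r+1}$ is unbounded, with two unbounded edges on $\Sigma_0$ and $\Sigma_{-r}$, only for $r\le q^*$, where $q^*$ is the first index with $s_{q^*}>0$ or $s_{q^*}=\infty$. For $q^*<r<n^*$, the line $\Sigma_{-r}$ cuts from $Q_1^r$ across to the $\Sigma_0$-edge at $Q_r^1=(0,z_r-1)$. This point lies above $\Sigma_1$ because $z_r>1$, so $F_{r+1}$ is a bounded polygon with the extra vertex $Q_r^1$. At parameter point \textbf{c} ($q^*=2$, $n^*=4$), $F_4$ is the quadrilateral with vertices $Q_1^1,Q_1^2,Q_1^3,Q_3^1$, which contradicts your claim at $r=3$. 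The final vertex list for $F_{n^*+1}$ survives, since that extra vertex is cut off by $\Sigma_1$, but the induction has to be restated.

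More seriously, you never prove the fact everything rests on. You need that $Q_1^1,\ldots,Q_1^{n^*-1}$ are consecutive vertices of $F_{n^*}$, that they lie strictly below $\Sigma_1$, and that no cut along $\Sigma_{-r}$ removes any of them. The heuristic that $f_R^{-1}$ contracts and rotates about $X$ cannot supply this, because $Q_1^{n^*}$ does lie above $\Sigma_1$ at points \textbf{a} and \textbf{c}. So the bound must be tied to $n^*$, and you give no mechanism linking the backward orbit of the origin to the escape time of its image $(1,0)=f_R(0,0)$.

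The paper supplies two ingredients here:
\begin{itemize}
\item For $r<q^*$, the slopes satisfy $s_1<s_2<\cdots<s_{q^*-1}\le 0$. Hence each $Q_1^{r+1}$ lies level with or below $Q_1^r$, and each cut ray misses the earlier edges.
\item For $q^*+1\le s<n^*$, it argues by contradiction using area expansion ($\delta_R>1$). If $Q_1^s$ were on or above $\Sigma_1$, the polygon $H$ with vertices $Q_1^1,\ldots,Q_1^s,Q_s^1$ would satisfy $f_R(H)\subset H$. This is because $f_R(H)$, with vertices $Q_1^0,\ldots,Q_1^{s-1},Q_s^0$, is the part of $H$ below $\Sigma_1$.
\end{itemize}
Your proposal never uses $\delta_R>1$ in an essential way, which signals that this step is missing.

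Part (ii) as written also contains a false step: points $(\varepsilon,0)$ just right of the origin do not have $\chi=1$. The image $f_R(\varepsilon,0)=(1+\tau_R\varepsilon,-\delta_R\varepsilon)$ is close to $(1,0)$, and $F_{n^*-1}$ is open and contains $(1,0)$, so in fact $\chi(\varepsilon,0)\ge n^*$.

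Used correctly, this observation gives (i) and (ii) by a route shorter than the paper's:
\begin{itemize}
\item Convexity of $F_{n^*-1}$ puts $(0,1]\times\{0\}$ inside $F_{n^*-1}$, which is (i).
\item The closed half-plane $\overline{U_L^{n^*-1}}$ contains $(1,0)$ but not $(\varepsilon,0)$. So $\Sigma_{-(n^*-1)}$ is not horizontal, whence $\alpha_{n^*-1}\ne 0$, and it crosses $\Sigma_1$ at $z_{n^*}\in(0,1]$, which is (ii).
\end{itemize}
Your own justification of $\alpha_{n^*-1}\ne0$ only handles the parallel sub-case with $\Sigma_1\subset U_R^{n^*-1}$. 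The sub-case $\Sigma_1\subset U_L^{n^*-1}$ is excluded precisely by $\chi(\varepsilon,0)\ge n^*$. None of this repairs part (iii), which still needs the missing argument described above.
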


\begin{figure}[b!]
\begin{center}
\includegraphics[width=15cm]{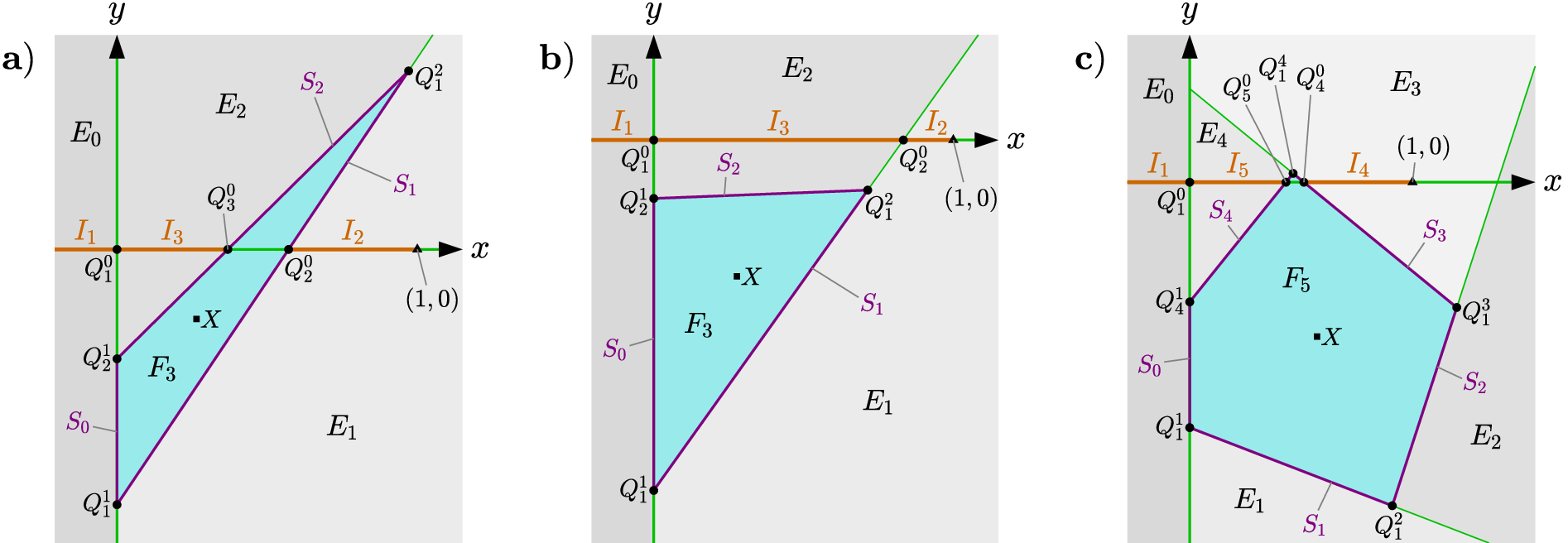}
\caption{
The polygons $F_{n^*+1}$ for parameter points {\bf a}, {\bf b}, and {\bf c} of Fig.~\ref{fig:bifSet}.
In {\bf a} and {\bf b}, $n^* = 2$; in {\bf c}, $n^* = 4$.
The sides of $F_{n^*+1}$ are labelled $S_0, S_1, \ldots, S_{n^*}$,
where each $S_j$ is a subset of $\Sigma_{-j}$.
We also indicate the sets $E_0, E_1, \ldots, E_{n^*}$,
and the intervals $I_1$, $I_{n^*}$, and $I_{n^*+1}$.
\label{fig:divK}
} 
\end{center}
\end{figure}

The polygon $F_{n^*+1}$ is shown in Fig.~\ref{fig:divK}
for our three main examples, {\bf a}, {\bf b}, and {\bf c},
for which $n^* = 2$, $n^* = 2$, and $n^* = 4$, respectively.
Notice Proposition \ref{pr:nStar} does not provide any information about
the position of $Q_1^{n^*}$ relative to $\Sigma_1$.
At parameter point {\bf b}, $Q_1^{n^*}$ lies below $\Sigma_1$.
In this case $F_{n^*+1}$ lies entirely below $\Sigma_1$,
thus $I_n = \varnothing$ for all $n > n^*+ 1$.
At parameter points {\bf a} and {\bf c}, $Q_1^{n^*}$ lies above $\Sigma_1$.
In this case $F_{n^*+1}$ contains points on $\Sigma_1$ with $x < 1$,
thus $I_n \ne \varnothing$ for some $n > n^*+ 1$.

Proposition \ref{pr:nStar} is fairly intuitive given the rotating nature of $f_R$.
The sequence of switching manifold preimages $\Sigma_{-r}$ essentially
revolves anticlockwise around $X$, as seen most clearly for parameter point {\bf c}.
Our proof of Proposition \ref{pr:nStar} is long and notation-heavy
because we find it necessary to characterise each set $E_1, E_2, \ldots, E_{n^*}$,
some of which are unbounded.
For these reasons the proof is deferred to Appendix \ref{app:nStarProof}.

\subsection{The $\frac{m}{p}$-form}
\label{sub:mpForm}

The sets $F_r$ are nested: by Lemma \ref{le:partition},
each $F_r$ is a proper subset of $F_{r-1}$.
As $r \to \infty$, the $F_r$ converge to the fixed point $X$.
Fig.~\ref{fig:divN} shows $F_r$ for $r = 1,2,\ldots,16$ for parameter point {\bf a}.
Notice $F_r$ is an $r$-sided polygon for $r = 3,4,\ldots,12$.
Notice also that $F_{12}$ lies below $\Sigma_1$,
so, by \eqref{eq:Fr4}, we have $f_R^{-1}(F_{12}) = F_{13}$,
and thus $f_R^{-1}(F_{13}) = F_{14}$, and so on. 
Hence $F_r$ is a $12$-sided polygon for all $r \ge 12$.

\begin{figure}[b!]
\begin{center}
\includegraphics[width=15cm]{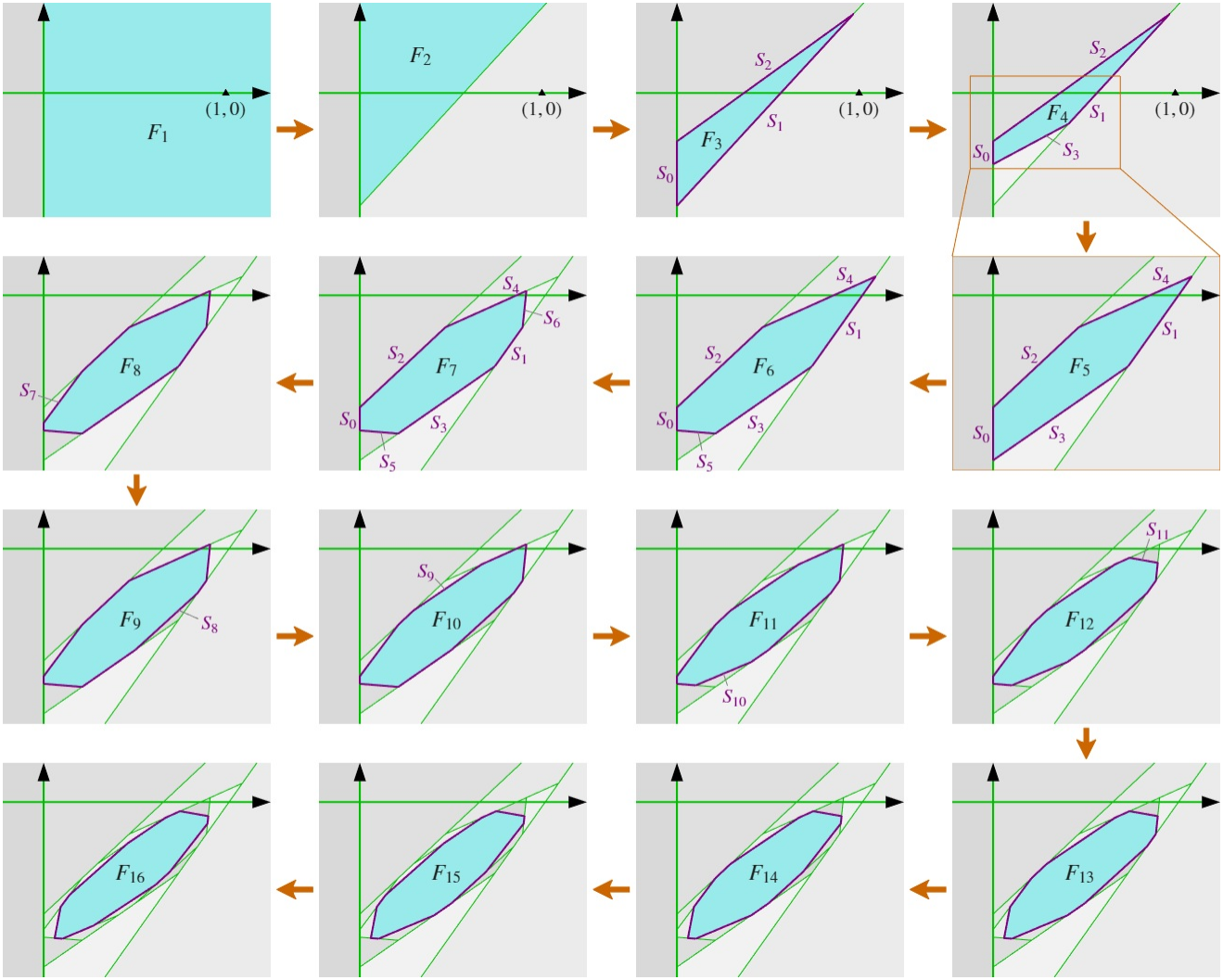}
\caption{
The sets $F_1, F_2, \ldots, F_{16}$ for parameter point {\bf a} of Fig.~\ref{fig:bifSet}.
\label{fig:divN}
} 
\end{center}
\end{figure}

In general, the first few $F_r$ are unbounded,
the next few $F_r$ are $r$-sided polygons with one side $S_0 \subset \Sigma_0$,
and the remaining $F_r$ are polygons with a fixed number of edges.
So, as $r$ increases, we can think of the sets $F_r$
as evolving with three stages of development.
The following definition refers to the middle stage.

\begin{definition}
We say $F_r$ is {\em adolescent} if it is an $r$-sided polygon
and its edges $S_0,S_1,\ldots,S_{r-1}$ can be enumerated in such a way
that $S_j \subset \Sigma_{-j}$ for each $j$.
We write $\nu(F_r)$ for the ordering of the $S_j$
starting from $S_0$ and going anticlockwise around $F_r$.
\label{df:adolescent}
\end{definition}

For example, in Fig.~\ref{fig:divN},
$\nu(F_5) = (0,3,1,4,2)$,
$\nu(F_6) = (0,5,3,1,4,2)$, and
$\nu(F_7) = (0,5,3,1,6,4,2)$.
In Definition \ref{df:adolescent} the ordering is anticlockwise,
instead of clockwise as in Fig.~\ref{fig:rotWordSchem},
because the edges $S_j$ result from mapping $\Sigma_0$ backwards under $f_R$.

\begin{definition}
Suppose $\delta_R > 1$ and $\delta_R > \frac{\tau_R^2}{4}$,
and let $\frac{m}{p} \in \left( 0, \frac{1}{2} \right)$ be irreducible.
We say $F_p$ has {\em $\frac{m}{p}$-form} if
\begin{enumerate}[label=\roman*),ref=\roman*,itemsep=0mm]
\item
it is adolescent,
\item
its edges $S_j$ are ordered anticlockwise from $S_0$ by $\cP_{m,p}$,
\item
the upper vertex of $S_0$ lies on or below $\Sigma_1$,
and if $F_p$ has a vertex on or above $\Sigma_1$,
then this vertex is an endpoint of $S_{p-1}$.
\end{enumerate}
\label{df:mpForm}
\end{definition}

In Fig.~\ref{fig:divN}, $F_5$ has $\frac{2}{5}$-form, and $F_7$ has $\frac{3}{7}$-form,
while $F_6$ does not have $\frac{m}{6}$-form for any value of $m$.
The constraints in condition (iii) of Definition \ref{df:mpForm}
will aid our proofs of Theorems \ref{th:existence} and \ref{th:construct} below.

\begin{lemma}
$F_{n^*+1}$ has $\frac{1}{n^*+1}$-form.
\label{le:baseCase}
\end{lemma}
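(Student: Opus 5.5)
Set $p = n^*+1$ and $m = 1$. Then $\frac{1}{p} \in \left(0,\frac12\right)$ because $n^* \ge 2$: indeed $n^* \ge 2$ since $\delta_R \ne 0$ and $1 > 0$ force $N(1) \ge 2$. The plan is to read off the three conditions of Definition \ref{df:mpForm} directly from Proposition \ref{pr:nStar}(iii).

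\textbf{Adolescence.} By Proposition \ref{pr:nStar}(iii), $F_{n^*+1}$ is the interior of a convex polygon with the $n^*+1$ vertices $Q_1^1, \ldots, Q_1^{n^*}, Q_{n^*}^1$ listed anticlockwise, so it has $p$ sides. I will identify each side using Lemma \ref{le:intersectionFormula}.
\begin{itemize}
\item Taking $n=1$ there, $Q_1^j$ is the intersection $\Sigma_{-j} \cap \Sigma_{-(j-1)}$. Here $\alpha_0 = 1 \ne 0$ by \eqref{eq:alphabeta}.
\item Hence for $j = 1, \ldots, n^*-1$ the side joining $Q_1^j$ to $Q_1^{j+1}$ lies on the common line $\Sigma_{-j}$; call it $S_j$.
\item Taking $n = n^*$ and $j=1$, $Q_{n^*}^1 \in \Sigma_{-n^*} \cap \Sigma_0$.
\item So the side from $Q_1^{n^*}$ to $Q_{n^*}^1$ lies on $\Sigma_{-n^*}$; call it $S_{n^*} = S_{p-1}$.
\item The side from $Q_{n^*}^1$ back to $Q_1^1$ lies on $\Sigma_0$; call it $S_0$.
\end{itemize}
One must check that consecutive vertices are distinct, so that each side is nondegenerate. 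This should follow from the polygon having exactly $n^*+1$ sides as stated in Proposition \ref{pr:nStar}, or from the distinctness of the lines $\Sigma_{-j}$ for a repelling focus.

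\textbf{Ordering.} Going anticlockwise from $S_0$, the sides are met in the order $S_0, S_1, S_2, \ldots, S_{n^*}$. To confirm the orientation, note that starting at $S_0 \subset \Sigma_0$ the next vertex anticlockwise is $Q_1^1$, followed by the side on $\Sigma_{-1}$. Thus $\nu(F_p) = (0,1,\ldots,p-1)$.

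From \eqref{eq:cP} with $m=1$, $\cP_{1,p}(i) = i$, so $\cP_{1,p}$ is the identity and condition (ii) holds. As a cross-check via Lemma \ref{le:differences}: the parents of $\frac{1}{p}$ are $\frac01$ and $\frac{1}{p-1}$, so $p^- = 1$ and $p^+ = p-1$. The difference vector of the identity is $(1,\ldots,1,-(p-1))$, consistent with that lemma.

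\textbf{Condition (iii).} The side $S_0 \subset \Sigma_0$ has endpoints $Q_1^1 = (0,0)$ and $Q_{n^*}^1 = (0, z_{n^*}-1)$, using \eqref{eq:Qn1} with $z_1 = 0$. Since $z_{n^*} \le 1$, the upper vertex of $S_0$ is $Q_1^1$, which lies on $\Sigma_1$. The worry is that $Q_1^1$ lies on $\Sigma_1$ yet is not an endpoint of $S_{p-1}$, which would break the second clause of (iii).

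I expect this to be the main obstacle. Proposition \ref{pr:nStar}(iii) says only that $Q_1^1, \ldots, Q_1^{n^*-1}$ lie "below" $\Sigma_1$, whereas $Q_1^1 = (0,0)$ lies on it. So I will recheck \eqref{eq:Qnj}. Indeed $Q_1^1 = f_R^{-1}(z_1,0) = f_R^{-1}(0,0) = (0,-1)$, using $z_1 = -\beta_0/\alpha_0 = 0$ and $f_R(0,-1) = (0,0)$. This lies strictly below $\Sigma_1$, so the upper vertex of $S_0$ is $Q_{n^*}^1$, which lies on or below $\Sigma_1$ by Proposition \ref{pr:nStar}(iii).

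Any vertex of $F_p$ on or above $\Sigma_1$ cannot be one of $Q_1^1, \ldots, Q_1^{n^*-1}$, since these are strictly below. It must therefore be $Q_1^{n^*}$ or $Q_{n^*}^1$, and both are endpoints of $S_{n^*} = S_{p-1}$. This gives (iii) and completes the plan.
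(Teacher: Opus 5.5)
Your proposal is correct and follows the paper's proof. Like the paper, you read the vertices off Proposition \ref{pr:nStar}(iii), identify each side's supporting line via Lemma \ref{le:intersectionFormula}, note that $\cP_{1,n^*+1}$ is the identity, and obtain condition (iii) from Proposition \ref{pr:nStar}(iii). You also make explicit the side on $\Sigma_{-n^*}$ and the check $\frac{1}{n^*+1}<\frac12$, both of which the paper leaves implicit. The only blemish is your transient claim $Q_1^1=(0,0)$: \eqref{eq:Qn1} with $z_1=0$ gives $(0,-1)$ immediately, so the ``main obstacle'' you anticipate never arises.
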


\begin{proof}
The left and right Farey parents of $\frac{1}{n^*+1}$ are
$\frac{m^-}{p^-} = \frac{0}{1}$ and $\frac{m^+}{p^+} = \frac{1}{n^*}$, respectively.
By Proposition \ref{pr:nStar},
$F_{n^*+1}$ is the $(n^*+1)$-sided polygon with vertices $Q_1^1, Q_1^2, \ldots, Q_1^{n^*}, Q_{n^*}^1$.
By Lemma \ref{le:intersectionFormula},
the line through the adjacent vertices $Q_{n^*}^1$ and $Q_1^1$ is $\Sigma_0$,
and for each $j = 1,2,\ldots,n^*-1$ the line through the adjacent vertices
$Q_1^j$ and $Q_1^{j+1}$ is $\Sigma_{-j}$, verifying (i).
This also verifies (ii), because $\cP[1,n^*+1] = (0,1,\ldots,n^*)$.
Finally, (iii) follows immediately part (iii) of Proposition \ref{pr:nStar}.
\end{proof}

In the remainder of this section
we write $\frac{m^-}{p^-}$ and $\frac{m^+}{p^+}$, respectively,
for the left and right parents of $\frac{m}{p}$.

\begin{lemma}
Suppose $F_p$ has $\frac{m}{p}$-form.
Then
\begin{enumerate}[label=\roman*),ref=\roman*,itemsep=0mm]
\item
$z_{p^-}$ and $z_{p^+}$ are well-defined with $0 \le z_{p^-} < z_{p^+} \le 1$,
\item
the vertices of $F_p$ are $Q_{p^-}^1, Q_{p^-}^2, \ldots, Q_{p^-}^{p^+}$
and $Q_{p^+}^1, Q_{p^+}^2, \ldots, Q_{p^+}^{p^-}$,
\item
for each $j = 1,2,\ldots,p^+$ the edge $S_{j-1}$ is clockwise from $S_{j + p^- - 1}$ and meets at the vertex $S_{p^-}^j$, and
for each $j = 1,2,\ldots,p^-$ the edge $S_{j-1}$ is anticlockwise from $S_{j + p^+ - 1}$ and meets at the vertex $S_{p^+}^j$.
\end{enumerate}
\label{le:verticesFp}
\end{lemma}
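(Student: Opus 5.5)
The plan is to read everything off the combinatorics of $\cP_{m,p}$ via Lemma \ref{le:differences}, and then identify vertices using Lemma \ref{le:intersectionFormula}. By conditions (i) and (ii) of Definition \ref{df:mpForm}, the edges of $F_p$ taken anticlockwise from $S_0$ are $S_{\sigma(0)}, S_{\sigma(1)}, \ldots, S_{\sigma(p-1)}$ with $\sigma = \cP_{m,p}$. By Lemma \ref{le:differences}, each pair of consecutive edges (cyclically) is either $S_i$ followed by $S_{i+p^-}$, or $S_i$ followed by $S_{i-p^+}$. Since $\sigma$ is a bijection, I will count cases. The index $i$ is followed by $i+p^-$ exactly when $i+p^- \le p-1$, that is, for $i = 0,\ldots,p^+-1$. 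Otherwise $i$ is followed by $i-p^+$, which happens for $i = p^+,\ldots,p-1$; equivalently, $S_{j-1}$ is preceded by $S_{j+p^+-1}$ for $j=1,\ldots,p^-$. This gives the adjacency statements in (iii).

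Next I identify each vertex. Two adjacent edges $S_{j-1}\subset\Sigma_{-(j-1)}$ and $S_{j+p^--1}\subset\Sigma_{-(p^-+j-1)}$ meet at a vertex of a genuine polygon, so the two lines intersect. Lemma \ref{le:intersectionFormula} (with $n=p^-$) then gives $\alpha_{p^--1}\ne0$, and the vertex is $Q_{p^-}^j$. The same argument with $n=p^+$ shows that $\alpha_{p^+-1}\ne 0$ and that $S_{j-1}$ meets $S_{j+p^+-1}$ at $Q_{p^+}^j$. The vertex notation in (iii) is presumably meant to read $Q$ rather than $S$. These $p^+ + p^- = p$ vertices exhaust the $p$ vertices of the $p$-gon, which proves (ii). Well-definedness of $z_{p^\pm}$ also follows, since $\alpha_{p^\pm-1}\ne0$.

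It remains to prove the inequalities $0\le z_{p^-}<z_{p^+}\le1$ in (i). The case $j=1$ gives vertices $Q_{p^-}^1=(0,z_{p^-}-1)$ and $Q_{p^+}^1=(0,z_{p^+}-1)$ by \eqref{eq:Qn1}. These are the two endpoints of $S_0\subset\Sigma_0$, since $S_0$ meets both $S_{p^-}$ and $S_{p^+}$. They are distinct points, so $z_{p^-}\ne z_{p^+}$. To order them, I will use the orientation: going anticlockwise around a convex polygon on the $y$-axis side, where the polygon lies in $\Omega_R$, the edge $S_0$ is traversed downward. The vertex shared with the edge preceding $S_0$, namely $S_{p^+}$ (since $\sigma(p-1)=p^+$), is therefore the upper endpoint, and the vertex shared with the edge following $S_0$, namely $S_{p^-}$ (since $\sigma(1)=p^-$), is the lower one. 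This gives $z_{p^-}<z_{p^+}$.

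Condition (iii) of Definition \ref{df:mpForm} places the upper vertex on or below $\Sigma_1$, giving $z_{p^+}\le1$. For $z_{p^-}\ge0$, I will use that $F_p\subset F_2 = U_R^0\cap U_R^1$, where $U_R^1=\{y<0\}$ has boundary $\Sigma_{-1}=\{y=-1\}$, since $f_R(x,y)_1 = \tau_R x+y+1$ at $x=0$. So $F_p$ lies above $y=-1$, and every vertex satisfies $y\ge -1$, giving $z_{p^-}\ge 0$.

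The main obstacle is the orientation step: checking carefully that anticlockwise traversal runs down $\Sigma_0$, and that the indexing conventions (clockwise versus anticlockwise in (iii)) match the direction in which $\sigma$ lists the edges. I expect to settle this by a direct convexity argument with the outward normal of $S_0$ pointing in the $-x$ direction.
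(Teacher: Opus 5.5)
Your route matches the paper's proof. You read the cyclic adjacencies off $\cP_{m,p}$ via Lemma~\ref{le:differences}, identify each vertex (and obtain $\alpha_{p^\pm-1}\ne 0$) from Lemma~\ref{le:intersectionFormula}, order the endpoints of $S_0$ by orientation, and take $z_{p^+}\le 1$ from Definition~\ref{df:mpForm}(iii). The case count is correct. The orientation step is also correct: anticlockwise traversal runs down $\Sigma_0$, so the endpoint shared with the following edge $S_{\sigma(1)}=S_{p^-}$ is the lower one. This is more explicit than the paper's version. You are also right that $S^j_{p^\pm}$ in (iii) should read $Q^j_{p^\pm}$.

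The final step, proving $z_{p^-}\ge 0$, is wrong as written.
\begin{itemize}
\item $U_R^1=f_R^{-1}(\Omega_R)=\{\tau_R x+y+1>0\}$, not $\{y<0\}$; the latter is the half-plane below $\Sigma_1$.
\item Its boundary $\Sigma_{-1}$ is the line $y=-1-\tau_R x$, which equals $\{y=-1\}$ only when $\tau_R=0$.
\item Consequently neither ``$F_p$ lies above $y=-1$'' nor ``every vertex satisfies $y\ge -1$'' holds in general. For example, at point \textbf{c} ($\tau_R>0$, $n^*=4$), $F_{n^*+1}$ has $\frac{1}{n^*+1}$-form. Its vertex $Q_1^2=f_R^{-1}(0,-1)=\bigl(1/\delta_R,\,-1-\tau_R/\delta_R\bigr)$ has $y<-1$.
\end{itemize}
The fix is easy, and your phrase ``at $x=0$'' suggests you had it in mind. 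The claim concerns only the vertex $Q_{p^-}^1=(0,z_{p^-}-1)$ on $\Sigma_0$. This vertex lies in the closure of $F_p\subset F_2\subset U_R^1$, so $\tau_R\cdot 0+(z_{p^-}-1)+1\ge 0$, i.e.\ $z_{p^-}\ge 0$. This is the paper's argument: a point of $F_p$ near $Q_{p^-}^1$ with $y<-1$ would map into $\Omega_L$, hence lie in $E_1$ and not in $F_p$.
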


As an example, Fig.~\ref{fig:divO} shows $F_{12}$ of Fig.~\ref{fig:divN},
but now with all of its edges and vertices labelled.
The set $F_{12}$ has $\frac{5}{12}$-form,
and by inspection we can see that it obeys Lemma \ref{le:verticesFp}
with $\frac{m^-}{p^-} = \frac{2}{5}$ and $\frac{m^+}{p^+} = \frac{3}{7}$.

\begin{figure}[b!]
\begin{center}
\includegraphics[width=9cm]{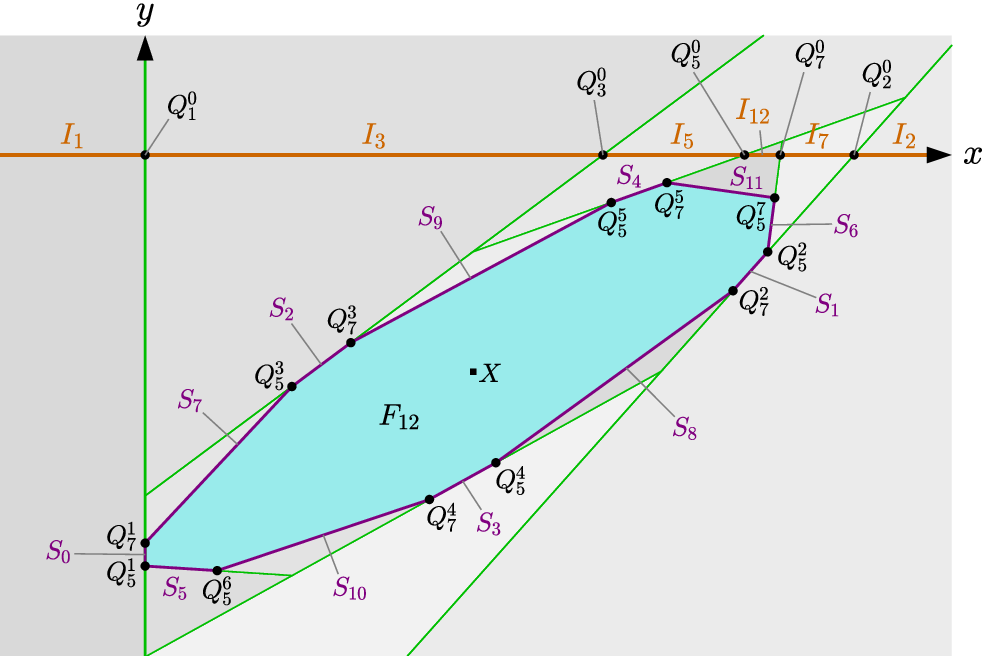}
\caption{
The polygon $F_{12}$ for parameter point {\bf a} of Fig.~\ref{fig:bifSet}.
\label{fig:divO}
} 
\end{center}
\end{figure}

\begin{proof}
Choose any $j \in \{ 0,1,\ldots,p-1 \}$.
Since $F_p$ has $\frac{m}{p}$-form, the edge anticlockwise from $S_j$ is $S_k$,
where $k$ is the value in $\cP[m,p]$ that precedes $j$ (cyclically).
By Lemma \ref{le:differences}, if $j < p^-$ then $k = j + p^+$,
so by Lemma \ref{le:intersectionFormula} these edges meet at $Q_{p^+}^{j + 1}$.
Thus $z_{p^+}$ is well-defined
and $Q_{p^+}^1, Q_{p^+}^2, \ldots, Q_{p^+}^{p^-}$ are vertices of $F_p$.
If instead $j \ge p^-$, then $k = j - p^-$,
so by Lemma \ref{le:intersectionFormula} the edges $S_j$ and $S_k$ meet at $Q_{p^-}^{j - p^- + 1}$.
Thus $z_{p^-}$ is well-defined
and that $Q_{p^-}^1, Q_{p^-}^2, \ldots, Q_{p^-}^{p^+}$
are vertices of $F_p$.

Notice $F_p \subset \Omega_R$ with edge $S_0 \subset \Sigma_0$.
This edge has vertices $Q_{p^-}^1 = \left( 0, z_{p^-} - 1 \right)$
and $Q_{p^+}^1 = \left( 0, z_{p^+} - 1 \right)$, see \eqref{eq:Qn1}.
The edge anticlockwise from $S_0$ is $S_{p^-}$,
thus by \eqref{eq:intersectionFormula} these edges share the vertex $Q_{p^-}^1$.
That is, $Q_{p^-}^1$ is situated below $Q_{p^+}^1$, which lies on or below $\Sigma_1$
by condition (iii) of Definition \ref{df:mpForm}, so $z_{p^-} < z_{p^+} \le 1$.

Suppose for a contradiction $z_{p^-} < 0$.
Then $Q_{p^-}^1 = (0,z_{p^-} - 1)$ lies below $(0,-1)$,
and hence $F_p$ contain a point $(\varepsilon,y)$,
with $\varepsilon > 0$ and $y < -1$, that maps $\Omega_L$.
By definition this point belongs to $E_1$, so cannot belong to $F_p$.
This is a contradiction, hence $z_{p^-} \ge 0$.
\end{proof}

Write
\begin{equation}
J_n = \bigcup_{j \ge n} I_j \,,
\label{eq:J}
\end{equation}
for all $n \ge 1$.
The following result characterises $J_p$ when $F_p$ has $\frac{m}{p}$-form.
This result is useful because it shows that
none of the sets $I_1, I_2, \ldots, I_{p-1}$
contain values in the interval $\left( z_{p^-}, z_{p^+} \right)$.

\begin{lemma}
If $F_p$ has $\frac{m}{p}$-form,
then $J_p = \left( z_{p^-}, z_{p^+} \right)$.
\label{le:Jp}
\end{lemma}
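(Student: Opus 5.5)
By \eqref{eq:J} and \eqref{eq:In2}, $J_p \times \{0\}$ is exactly the set of points of $\tilde{\Sigma}_1$ lying in $E_{p-1} \cup E_p \cup \cdots$, that is, in $\overline{F_p}$-type regions not reached before time $p-1$. More precisely, a point $(x,0)$ with $x \le 1$ has $\chi(x,0) \ge p-1$ if and only if it belongs to $F_{p-1}$. So I will show $J_p \times \{0\} = F_{p-1} \cap \tilde{\Sigma}_1$, though I need to be careful with indices. Since $N(x) = \chi(x,0)+1$, the condition $N(x) \ge p$ means $\chi(x,0) \ge p-1$, i.e.\ $(x,0) \notin E_0 \cup \cdots \cup E_{p-2}$, i.e.\ $(x,0) \in F_{p-1}$. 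Then I will use Lemma \ref{le:ErFr} to relate $F_{p-1}$ to $F_p$. By \eqref{eq:Fr4}, $F_p = f_R^{-1}(F_{p-1} \cap \{P_2<0\})$, so $F_{p-1} \cap \{y<0\} = f_R(F_p)$. This suggests working with $\Sigma_1$ via its preimage. We have $f_R^{-1}(\Sigma_1) = \Sigma_0$, since $f_R(0,y) = (y+1,0)$. Points $(x,0)$ lie on the boundary $y=0$, so the cleaner route is: $(x,0) \in F_{p-1}$ if and only if $x>0$ and $f_R^{-1}(x,0) = (0,x-1)$ satisfies $f_R(0,x-1) \in F_{p-1}$.

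I will therefore argue directly with \eqref{eq:FrU}. We have $(x,0) \in F_{p-1}$ if and only if $(x,0) \in U_R^0 \cap \cdots \cap U_R^{p-2}$. Because $(x,0) = f_R(0,x-1)$, this holds if and only if $(0,x-1) \in U_R^1 \cap \cdots \cap U_R^{p-1}$. So the task is to show that the segment of $\Sigma_0$ cut out by the half-planes $U_R^1, \ldots, U_R^{p-1}$ is exactly the open segment between $Q_{p^-}^1 = (0,z_{p^-}-1)$ and $Q_{p^+}^1 = (0,z_{p^+}-1)$. Equivalently, it is the relative interior of the edge $S_0$ of $F_p$.

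The key step is the following. $F_p$ is the open convex polygon $\bigcap_{j=0}^{p-1} U_R^j$, with edge $S_0 \subset \Sigma_0$. By Lemma \ref{le:verticesFp}, the endpoints of $S_0$ are $Q_{p^-}^1$ and $Q_{p^+}^1$. A point $Z$ on $\Sigma_0$ lies in $\bigcap_{j=1}^{p-1} U_R^j$ exactly when $Z$ is in the relative interior of $S_0$. For the forward direction, interior points of $S_0$ are limits of points of $F_p$, so they lie in the closure of each $U_R^j$. They cannot lie on $\Sigma_{-j}$ for $j \ge 1$, because the only points of $\Sigma_{-j}$ on $\Sigma_0$ are the vertices, by Lemma \ref{le:intersectionFormula} and adjacency. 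For the converse, consider a point of $\Sigma_0$ outside the closed segment. Convexity of the open polygon means the supporting lines of the two edges adjacent to $S_0$, namely $\Sigma_{-p^-}$ and $\Sigma_{-p^+}$, cut $\Sigma_0$ at the endpoints. Points beyond an endpoint therefore lie in $U_L^{p^-}$ or $U_L^{p^+}$ (or on the boundary line). The endpoints themselves lie on $\Sigma_{-p^\pm}$, which is not in $U_R$.

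Translating back gives $J_p = (z_{p^-}, z_{p^+})$. Lemma \ref{le:verticesFp}(i) gives $z_{p^+} \le 1$, so the restriction $x \le 1$ is automatic, and $z_{p^-} \ge 0$ is consistent with $x>0$. The main obstacle is the converse part of the key step: showing that a point of $\Sigma_0$ outside the segment fails some $U_R^j$ with $1 \le j \le p-1$. This requires arguing that the half-plane $U_R^{p^\pm}$ excludes the ray beyond the vertex. I would settle this using convexity, together with the fact that $F_p$ lies on the $U_R$ side of the line $\Sigma_{-p^\pm}$ and that this line crosses $\Sigma_0$ transversally at the vertex.
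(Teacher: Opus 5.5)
Your proposal is correct and follows essentially the same route as the paper. You identify $J_p\times\{0\}$ with $F_{p-1}\cap\tilde{\Sigma}_1$, transport it by one step of $f_R$ to the edge $S_0$ of $F_p$ (you via \eqref{eq:FrU}, the paper via \eqref{eq:Fr4}), and read off the endpoints $Q_{p^\pm}^1=(0,z_{p^\pm}-1)$ from Lemma \ref{le:verticesFp}. Your supporting-line argument for the open endpoints is more explicit than the paper's, but phrase the forward direction as ``$\Sigma_{-j}$ meets the closed edge $S_0$ at most at its endpoints'' rather than ``the only points of $\Sigma_{-j}$ on $\Sigma_0$ are vertices'', since the point $\Sigma_{-j}\cap\Sigma_0=\{Q_j^1\}$ (when it exists) need not be a vertex of $F_p$.
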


\begin{proof}
By Definition \ref{df:mpForm},
the intersection of $\Sigma_0$ with the closure of $F_p$ is the edge $S_0$.
Then by \eqref{eq:Fr4},
the intersection of $\Sigma_1$ with the closure of $F_{p-1}$ is $f_R(S_0)$.
Notice $J_p \times \{ 0 \} = F_{p-1} \cap \tilde{\Sigma}_1$,
by \eqref{eq:In2} and \eqref{eq:Er}.
So $J_p$ consists of all $x \le 1$ for which $(x,0)$ lies between the endpoints of $f_R(S_0)$.

By Lemma \ref{le:verticesFp}, $S_0$ has endpoints $Q_{p^-}^1$ and $Q_{p^+}^1$.
Thus $f_R(S_0)$ has endpoints $Q_{p^-}^0 = (z_{p^-},0)$ and $Q_{p^+}^0 = (z_{p^+},0)$.
Since $0 \le z_{p^-} < z_{p^+} \le 1$, by Lemma \ref{le:verticesFp}(i),
we have $J_p = \left( z_{p^-}, z_{p^+} \right)$.
\end{proof}

\subsection{Three cases}
\label{sub:threeCases}

Consider the edge $S_{p-1}$ of $F_p$ in $\frac{m}{p}$-form.
By Lemma \ref{le:verticesFp},
the edge clockwise from $S_{p-1}$ is $S_{p^+ - 1}$,
and by \eqref{eq:intersectionFormula} these edges meet at $Q_{p^-}^{p^+}$.
The edge anticlockwise from $S_{p-1}$ is $S_{p^- - 1}$,
and these edges meet at $Q_{p^+}^{p^-}$.

\begin{proposition}
Suppose $F_p$ has $\frac{m}{p}$-form.
\begin{enumerate}[label=\Roman*),ref=\Roman*,itemsep=0mm]
\item
If $Q_{p^-}^{p^+}$ is located below $\Sigma_1$, and $Q_{p^+}^{p^-}$ is located above $\Sigma_1$,
then $z_p \in \left( z_{p^-},z_{p^+} \right)$,
$I_p = \left[ z_p, z_{p^+} \right)$,
and $F_{p^- + p}$ has $\frac{m^- + m}{p^- + p}$-form.
\item
If $Q_{p^-}^{p^+}$ is located above $\Sigma_1$, and $Q_{p^+}^{p^-}$ is located below $\Sigma_1$,
then $z_{p^-} \in \left( z_p,z_{p^+} \right)$,
$I_p = \left( z_{p^-}, z_p \right]$,
and $F_{p + p^+}$ has $\frac{m + m^+}{p + p^+}$-form.
\item
Otherwise $Q_{p^-}^{p^+}$ and $Q_{p^+}^{p^-}$ are both located on or below $\Sigma_1$, and
$I_p = \left( z_{p^-}, z_{p^+} \right)$.
\end{enumerate}
\label{pr:threeCases}
\end{proposition}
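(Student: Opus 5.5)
The argument rests on Lemma \ref{le:ErFr}: $F_{p+1}$ is obtained from $F_p$ by deleting the points on or above $\Sigma_1$ and pulling back by $f_R^{-1}$. Iterating, $F_{p+k}$ stays a polygon whose edges are preimages of the edges of $F_p$, together with one new edge $S_0 \subset \Sigma_0$ created whenever the current polygon is cut by $\Sigma_1$. By condition (iii) of Definition \ref{df:mpForm}, only the edge $S_{p-1}$ can have vertices on or above $\Sigma_1$. Its endpoints are $Q_{p^-}^{p^+}$ (shared with $S_{p^+-1}$) and $Q_{p^+}^{p^-}$ (shared with $S_{p^--1}$), so the three cases are decided by where these two vertices lie relative to $\Sigma_1$.

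\textbf{Locating $I_p$.} Since $I_p\times\{0\} = E_{p-1}\cap\tilde\Sigma_1$ and $J_p=(z_{p^-},z_{p^+})$ by Lemma \ref{le:Jp}, we have $I_p = J_p\setminus J_{p+1}$. By \eqref{eq:Fr4} applied to $F_p$, the set $J_{p+1}\times\{0\}$ is the image under $f_R$ of $F_{p+1}\cap\Sigma_0$. Equivalently, $I_p$ consists of the points of $f_R(S_0)$ whose orbit reaches $S_{p-1}$, i.e.\ lands in $\overline{\Omega}_L$ after $p-1$ steps. Pushing $S_0$ forward $p-1$ times under $f_R$ carries the part of $F_p$ near $S_{p-1}$ onto the part near $f_R^{p-1}$ of that region. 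So we track where $\Sigma_{-(p-1)}$ crosses $\Sigma_1$, and by Lemma \ref{le:znAsIntersection} this crossing is $(z_p,0)$.
\begin{itemize}
\item In case (III), $S_{p-1}$ lies on or below $\Sigma_1$, so $F_p$ meets $\Sigma_1$ in no points. Then $F_{p+1}=f_R^{-1}(F_p\cap\{y<0\})=f_R^{-1}(F_p)$ has no edge on $\Sigma_0$, so $J_{p+1}=\varnothing$ and $I_p=J_p$.
\item In cases (I) and (II), $\Sigma_1$ cuts $S_{p-1}$ at an interior point. Pulling this point back once gives the vertex $(0,z_p-1)$, where the new edge $\Sigma_0\cap \overline{F_{p+1}}$ meets $f_R^{-1}(S_{p-1})\subset\Sigma_{-p}$. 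The new edge is the preimage of the part of $F_p\cap\Sigma_1$ between $(z_p,0)$ and the side of $S_{p-1}$ that sits above $\Sigma_1$.
\item If $Q_{p^+}^{p^-}$ is above, that side is the $S_{p^--1}$ side. Using the anticlockwise orientation, which is the step needing care, this gives $J_{p+1}=(z_{p^-},z_p)$ and hence $I_p=[z_p,z_{p^+})$. Case (II) is the mirror image, giving $I_p=(z_{p^-},z_p]$.
\end{itemize}
The inclusions $z_p\in(z_{p^-},z_{p^+})$ follow because $(z_p,0)$ lies strictly inside the segment $F_p\cap\Sigma_1$. The statement of case (II) reads $z_{p^-}\in(z_p,z_{p^+})$, which I take to be a typo for $z_p\in(z_{p^-},z_{p^+})$.

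\textbf{Propagating the form.} For case (I), iterate the pullback $p^-$ more times. The polygon $F_{p+1}$ has $p+1$ sides $S_0,\dots,S_p$. By construction the new $S_0$ is inserted between $S_p$ (the old $S_{p-1}$, relabelled) and the preimage of the side adjacent to the vertex above $\Sigma_1$. The relabelling is $S_j\mapsto S_{j+1}$ on old edges, plus the fresh $S_0$.

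The main obstacle is to show that for each intermediate $k=1,\dots,p^--1$ the polygon $F_{p+k}$ again has exactly one edge crossing $\Sigma_1$, namely $S_{p+k-1}$. This is needed so that each step adds exactly one edge and the sequence stays adolescent. The plan is to prove this by tracking vertex heights: the vertices of $F_{p+k}$ are $f_R^{-k}$-images of vertices of $F_p$ plus the new $Q^j_p$. Convexity (Lemma \ref{le:convexity}) then forces the region above $\Sigma_1$ to be a single cap. Using Lemma \ref{le:intersectionFormula}, this cap sits on the edge $\Sigma_{-(p+k-1)}$, whose endpoints are the images of the old top vertices. I expect a clean way to organise this is to note that every vertex of $F_{p+k}$ other than those of the cap is an $f_R^{-1}$-preimage of a vertex that lay below $\Sigma_1$ at the previous stage.

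After $p^-$ steps the ordering of edges is obtained from $\cP_{m,p}$ by inserting $p^-$ new indices. Check via Lemma \ref{le:differences} that every cyclic difference is now $p$ or $-p^-$, the parents of $\frac{m^-+m}{p^-+p}$. By that lemma the ordering equals $\cP_{m^-+m,p^-+p}$, which gives condition (ii). Finally verify condition (iii) at stage $p^-+p$: the upper vertex of $S_0$ is $(0,z_{p^-+p}-1)$-type and lies below $\Sigma_1$ since $z\le1$, and any vertex above $\Sigma_1$ is on the last edge. Case (II) is symmetric with $p^+$ in place of $p^-$.
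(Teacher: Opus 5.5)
Your overall plan matches the paper's. You read off $I_p=J_p\setminus J_{p+1}$ from where $\Sigma_1$ meets $S_{p-1}$, then run $p^-$ truncate-and-pull-back steps and identify the new ordering with Lemma~\ref{le:differences}. Two essential steps are missing, however.

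\textbf{Case III is not proved.} The proposition says ``otherwise'' and then \emph{asserts} that $Q_{p^-}^{p^+}$ and $Q_{p^+}^{p^-}$ both lie on or below $\Sigma_1$. You take this as the definition of case III, so the configurations with both endpoints of $S_{p-1}$ on or above $\Sigma_1$ are never excluded. That includes the case of one endpoint on $\Sigma_1$ and the other strictly above. Excluding them is where $\delta_R>1$ enters, and your argument never uses it. Suppose both endpoints of $S_{p-1}$ were on or above $\Sigma_1$. All other vertices are below by Definition~\ref{df:mpForm}(iii), so $\Sigma_1$ would cross the neighbouring edges $S_{p^+-1}$ and $S_{p^--1}$ at $Q_{p^+}^0$ and $Q_{p^-}^0$. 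Then $F_p\cap\{y<0\}$ would be the interior of the convex hull of $Q_{p^-}^0,\dots,Q_{p^-}^{p^+-1}$ and $Q_{p^+}^0,\dots,Q_{p^+}^{p^--1}$, which by Lemma~\ref{le:verticesFp}(ii) is exactly $f_R(F_p)$. Hence $f_R(F_p)\subset F_p$, which is impossible since $f_R$ multiplies areas by $\delta_R>1$.

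\textbf{The propagation in case I rests on a false invariant.} For $1\le k<p^-$ the cap above $\Sigma_1$ does not move onto the newest edge $S_{p+k-1}$. Throughout $0\le k<p^-$ the unique vertex of $F_{p+k}$ above $\Sigma_1$ stays the same point, $Q_{p^+}^{p^-}=S_{p-1}\cap S_{p^--1}$. Meanwhile $S_{p+k-1}$, with endpoints $Q_{p^-}^{p^++k}$ and $Q_p^{k}$, lies below $\Sigma_1$. For example, in the step $\frac37\to\frac5{12}$ the vertex of $F_8$ above $\Sigma_1$ is $Q_2^5=S_6\cap S_4$, not a vertex of $S_7$.

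Convexity only gives a single cap, whereas adolescence needs that cap to contain exactly one vertex. Proving this is the content of the paper's Lemma~\ref{le:caseI}. Pulling back $Q_{p^+}^0\in U_L^{p-1}$, while every other vertex pulls back into $U_R$, shows that cutting $F_{p+k}$ by $\Sigma_{-(p+k)}$ removes exactly $Q_{p^+}^{k+1}$. The cut creates $Q_{p^-}^{p^++k+1}$ and $Q_p^{k+1}$. These lie on edges entirely below $\Sigma_1$, except when $p^->p^+$ and $k=p^--p^+-1$. In that case $Q_{p^-}^{p^-}$ lands on $S_{p^--1}$. It is then placed below $\Sigma_1$ by the rotation of $f_R^{p^-}$ about $X$: it is anticlockwise from $Q_{p^-}^0\in\Sigma_1$. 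Your final check of Definition~\ref{df:mpForm}(iii), that any vertex above $\Sigma_1$ is on the last edge, depends on all of this.

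\textbf{Smaller slips.}
\begin{itemize}
\item The differences of $\cP_{m^-+m,p^-+p}$ are $p^-$ and $-p$, not $p$ and $-p^-$.
\item The upper vertex of the new $S_0$ is $Q_p^1=(0,z_p-1)$, not $(0,z_{p^-+p}-1)$.
\item $(z_p,0)$ is an endpoint of the chord $F_p\cap\Sigma_1$, not strictly inside it. What gives $z_p\in(z_{p^-},z_{p^+})$ is that $(z_p,0)$ lies in the relative interior of $S_{p-1}$, hence in $F_{p-1}\cap\Sigma_1=J_p\times\{0\}$. Then $J_{p+1}=(z_{p^-},z_p)$, which settles the orientation step you deferred.
\end{itemize}
Your reading of case II's $z_{p^-}\in(z_p,z_{p^+})$ as a typo for $z_p\in(z_{p^-},z_{p^+})$ is correct.
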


\begin{figure}[b!]
\begin{center}
\includegraphics[width=15cm]{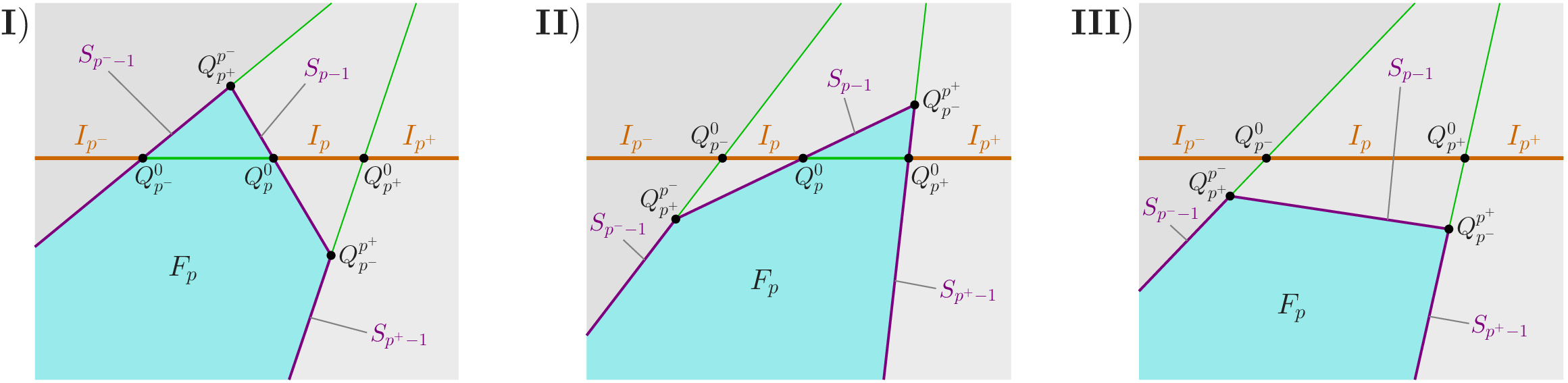}
\caption{
The three cases of Proposition \ref{pr:threeCases}.
\label{fig:divP}
} 
\end{center}
\end{figure}

The three cases of Proposition \ref{pr:threeCases} are illustrated in Fig.~\ref{fig:divP}.
Proposition \ref{pr:threeCases} shows that it is not possible
for $Q_{p^-}^{p^+}$ and $Q_{p^+}^{p^-}$ to both lie above $\Sigma_1$.
Below we will see that the three cases of Proposition \ref{pr:threeCases}
correspond to the three cases in Step 3 of the algorithm in Theorem \ref{th:construct}.

To verify part I of Proposition \ref{pr:threeCases},
it necessary to construct $F_{p^- + p}$ from $F_p$.
This is achieved by characterising $F_{p+k}$, for all $k = 0,1,\ldots,p^-$, Lemma \ref{le:caseI},
which is turn is achieved by performing induction on $k$.
First we characterise $I_p$ in case I:

\begin{lemma}
In case I of Proposition \ref{pr:threeCases},
$z_p \in \left( z_{p^-}, z_{p^+} \right)$,
$I_p = \left[ z_p, z_{p^+} \right)$,
$Q_{p^-}^0 \in U_R^{p-1}$, and $Q_{p^+}^0 \in U_L^{p-1}$.
\label{le:caseIeasyBit}
\end{lemma}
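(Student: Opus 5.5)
Since $F_p$ has $\frac{m}{p}$-form, Lemma \ref{le:verticesFp} gives that $S_{p-1}\subset\Sigma_{-(p-1)}$ is an edge of $F_p$ with endpoints $Q_{p^-}^{p^+}$ and $Q_{p^+}^{p^-}$. In case I these endpoints lie strictly on opposite sides of $\Sigma_1$, so the segment $S_{p-1}$ crosses $\Sigma_1$ at an interior point. The plan is to identify this crossing with $(z_p,0)$, read off which half-planes the points $Q_{p^\pm}^0$ occupy, and then deduce $I_p$ from Lemma \ref{le:Jp}.

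First, $\Sigma_{-(p-1)}$ contains two points strictly on opposite sides of $\Sigma_1$, so it is not parallel to $\Sigma_1$. By Lemma \ref{le:sr2} this gives $\alpha_{p-1}\ne0$. Lemma \ref{le:znAsIntersection} then identifies the crossing point as $(z_p,0)$.

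Next, the crossing lies in the closure of $F_p$. Since $F_p\subset\Omega_R$ and $\Sigma_1$ meets the closure of $F_p$ only in $f_R(S_0)$ (as in the proof of Lemma \ref{le:Jp}), we need to show $z_p$ is strictly between $z_{p^-}$ and $z_{p^+}$. A cleaner route is direct. By convexity of the closure of $F_{p-1}$ and \eqref{eq:Fr4}, the segment of $\Sigma_1$ between $Q_{p^-}^0=(z_{p^-},0)$ and $Q_{p^+}^0=(z_{p^+},0)$ lies in the closure of $F_{p-1}$. The line $\Sigma_{-(p-1)}$ is the boundary of $U_R^{p-1}$. So I would show that $Q_{p^-}^0$ and $Q_{p^+}^0$ lie on opposite sides of $\Sigma_{-(p-1)}$, which forces $z_p\in(z_{p^-},z_{p^+})$.

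To get the sides, use the adjacency from Lemma \ref{le:verticesFp}(iii). The edge $S_{p-1}$ meets $S_{p^+-1}$ at $Q_{p^-}^{p^+}$, which lies below $\Sigma_1$. Apply $f_R^{p^+-1}$ to this configuration and note that $f_R^{p^+}$ maps $Q_{p^-}^{p^+}$ to $Q_{p^-}^0$. Whether $Q_{p^-}^{p^+}$ lies below $\Sigma_1$ should translate, via the local geometry at that vertex, into which side of $\Sigma_{-(p-1)}$ the point $Q_{p^-}^0$ lies on. This is the delicate step. I would try the following: the point of $S_{p-1}$ below $\Sigma_1$ near $Q_{p^-}^{p^+}$ belongs to the boundary of $F_p$, so under $f_R$ it stays adjacent to $F_{p-1}$.

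I expect an easier approach, using $F_p\subset U_R^{p-1}$ and convexity, to suffice. The part of the closure of $F_p$ below $\Sigma_1$ maps under $f_R$ into the closure of $F_{p-1}$, which is contained in $\overline{U_R^{p-2}}$. Points on $S_{p-1}$ below $\Sigma_1$ have $f_R$-images on $\Sigma_{-(p-2)}$, while points above $\Sigma_1$ map outside $\Omega_R$. Therefore, among the two endpoints of the chord $F_{p-1}\cap\Sigma_1$, namely $Q_{p^-}^0$ and $Q_{p^+}^0$, the one lying in $U_L^{p-1}$ is the one corresponding to the side of $S_{p-1}$ that lies above $\Sigma_1$. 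I would verify this using the anticlockwise ordering: $Q_{p^+}^{p^-}$ is the anticlockwise endpoint of $S_{p-1}$, and relating it to $Q_{p^+}^1\in S_0$ via the chain of edges $S_{p^--1},\dots,S_0$ should pin down that $Q_{p^+}^0\in U_L^{p-1}$ and $Q_{p^-}^0\in U_R^{p-1}$. This orientation bookkeeping is the main obstacle.

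Finally, $I_p$ consists of the points of $J_p=(z_{p^-},z_{p^+})$ (Lemma \ref{le:Jp}) whose $(x,0)$ lies in $\overline{U_L^{p-1}}$. The closed side of $\Sigma_{-(p-1)}$ containing $Q_{p^+}^0$ cuts $J_p$ in $[z_p,z_{p^+})$, so $I_p=[z_p,z_{p^+})$.
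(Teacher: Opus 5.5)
\textbf{There is a genuine gap: the orientation step, on which every conclusion of the lemma depends, is never proved.}

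The parts you do carry out are fine: $\alpha_{p-1}\neq 0$ via Lemma~\ref{le:sr2}, the identification of the crossing of $S_{p-1}$ with $\Sigma_1$ as $(z_p,0)$ via Lemma~\ref{le:znAsIntersection}, and the final extraction of $I_p$ from Lemma~\ref{le:Jp} once you know which side of $\Sigma_{-(p-1)}$ is $U_R^{p-1}$. What is missing is the claim that $Q_{p^-}^0$ and $Q_{p^+}^0$ lie strictly on opposite sides of $\Sigma_{-(p-1)}$, with $Q_{p^-}^0\in U_R^{p-1}$. You flag this yourself as ``the main obstacle'' and say only what ``should pin down'' the answer.

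The heuristic you offer for it does not work:
\begin{itemize}
\item Under $f_R$, points above $\Sigma_1$ do not map outside $\Omega_R$. It is $f_R^{-1}$ that sends the upper half-plane into $\Omega_L$.
\item Every point of $S_{p-1}$ maps into $\Sigma_{-(p-2)}$, so the qualifier ``below $\Sigma_1$'' in that sentence is vacuous and distinguishes nothing.
\item $\Sigma_1$ meets $\overline{F_{p-1}}$, not $\overline{F_p}$, in $f_R(S_0)$. In case I, $\overline{F_p}\cap\Sigma_1$ is only the part from $Q_{p^-}^0$ to $(z_p,0)$.
\end{itemize}
As written, the proposal proves neither $z_p\in(z_{p^-},z_{p^+})$, nor the two membership claims, nor the half-open form of $I_p$.

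\textbf{The missing idea} is that $Q_{p^-}^0$ lies on the boundary of $F_p$ itself. The edge $S_{p^--1}$ has endpoint $Q_{p^+}^{p^-}$, which is above $\Sigma_1$ in case I. Its other endpoint is not an endpoint of $S_{p-1}$, so it lies strictly below $\Sigma_1$ by Definition~\ref{df:mpForm}(iii). Hence $S_{p^--1}$ crosses $\Sigma_1$, necessarily at $Q_{p^-}^0$. This point lies in $\overline{F_p}$ but not on $S_{p-1}$, and by \eqref{eq:FrU} and convexity every such point is in $U_R^{p-1}$. No rotation bookkeeping is needed.

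\textbf{For the strict inequality}, relative-interior points of $S_{p-1}$ avoid the edges $S_{p^--1}$ and $S_{p^+-1}$, so they lie in $U_R^{p^--1}\cap U_R^{p^+-1}$. This intersection meets $\Sigma_1$ exactly in $(z_{p^-},z_{p^+})\times\{0\}$. The paper checks this by noting that $Q_{p^-}^1\in U_R^{p^+}$ and $Q_{p^+}^1\in U_R^{p^-}$ on $S_0$, with $Q_{p^-}^1$ below $Q_{p^+}^1$, and then applying $f_R$, which maps $\Sigma_0$ monotonically onto $\Sigma_1$.

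Alternatively, from your own observation $\overline{F_{p-1}}\cap\Sigma_1=f_R(S_0)$ you get $z_p\in[z_{p^-},z_{p^+}]$. Equality $z_p=z_{p^\pm}$ would place $(z_p,0)$ in $S_{p^\pm-1}\cap S_{p-1}$, which is a vertex lying strictly off $\Sigma_1$, a contradiction.

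With the strict inequality in hand, $Q_{p^+}^0$ lies beyond $(z_p,0)$ from $Q_{p^-}^0$, so it is in $U_L^{p-1}$. Your last paragraph then gives $I_p=[z_p,z_{p^+})$.
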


\begin{proof}
The edge $S_{p^-}$ of $F_p$ has vertex $Q_{p^-}^1 \in S_0$,
while the edge $S_{p^+}$ of $F_p$ has vertex $Q_{p^+}^1 \in S_0$.
By \eqref{eq:FrU} and the convexity of $F_p$,
if $P \notin S_j$ belongs to the closure of $F_p$, then $P \in U_R^j$.
Thus $Q_{p^-}^1 \in U_R^{p^+}$ and $Q_{p^+}^1 \in U_R^{p^-}$.

Notice $Q_{p^-}^1$ is situated below $Q_{p^+}^1$ by Lemma \ref{le:verticesFp}.
Thus $U_R^{p^+}$ is the open half-plane consisting of all points below $\Sigma_{-p^+}$,
while $U_R^{p^-}$ is the open half-plane consisting of all points above $\Sigma_{-p^-}$.

By mapping these under $f_R$,
we deduce that $U_R^{p^+ - 1}$ is the open half-plane consisting
of all points to the left of $\Sigma_{-(p^+ - 1)}$,
while $U_R^{p^- - 1}$ is the open half-plane consisting of all points to the right of $\Sigma_{-(p^- - 1)}$.
The lines $\Sigma_{-(p^+ - 1)}$ and $\Sigma_{-(p^- - 1)}$ intersect $\Sigma_1$ at
$Q_{p^+}^0 = (z_{p^+},0)$ and $Q_{p^-}^0 = (z_{p^-},0)$ respectively.
Thus $x \in \left( z_{p^-}, z_{p^+} \right)$ if and only if $(x,0) \in U_R^{p^- - 1} \cap U_R^{p^+ - 1}$.

The point $Q_{p^+}^{p^-}$ belongs to $S_{p^- - 1} \cap S_{p-1}$ but not $S_{p^+ - 1}$,
thus $Q_{p^+}^{p^-} \in U_R^{p^+ - 1}$.
Similarly, $Q_{p^-}^{p^+}$ belongs to $S_{p^+ - 1} \cap S_{p-1}$ but not $S_{p^- - 1}$,
thus $Q_{p^-}^{p^+} \in U_R^{p^- - 1}$.
Thus the edge $S_{p-1}$, minus its endpoints, belongs to $U_R^{p^- - 1} \cap U_R^{p^+ - 1}$.
In case I this edge intersects $\Sigma_1$, necessarily at
$(z_p,0)$, so $z_p$ is well-defined and belongs to $\left( z_{p^-}, z_{p^+} \right)$.

In case I, the vertices of $S_{p^- - 1}$ lie on different sides of $\Sigma_1$.
Thus $Q_{p^-}^0$, where the line through $S_{p^- - 1}$ intersects $\Sigma_1$, belongs to $S_{p^- - 1}$.
So $Q_{p^-}^0 \notin S_{p-1}$ belongs to the closure of $F_p$, hence belongs to $U_R^{p-1}$.
Thus $U_R^{p-1}$ consists of all points to the left of $\Sigma_{-(p-1)}$,
and so $Q_{p^+}^0 \in U_L^{p-1}$.
Finally, by Lemma \ref{le:Jp}, $I_p$ is the set of all $x \in \left( z_{p^-}, z_{p^+} \right)$
for which $(x,0) \notin U_R^{p-1}$, thus $I_p = \left[ z_p, z_{p^+} \right)$.
\end{proof}

\begin{lemma}
Consider case I in Proposition \ref{pr:threeCases}.
For all $k = 0,1,\ldots,p^-$,
\begin{enumerate}[label=\roman*),ref=\roman*,itemsep=0mm]
\item
$F_{p+k}$ is adolescent,
\item
the vertices of $F_{p+k}$ are
\begin{equation}
Q_{p^-}^1, Q_{p^-}^2, \ldots, Q_{p^-}^{p^+ + k}, \qquad
Q_{p^+}^{k+1}, Q_{p^+}^{k+2}, \ldots, Q_{p^+}^{p^-}, \qquad
Q_p^1, Q_p^2, \ldots, Q_p^k,
\end{equation}
\item
$\partial \nu(F_{p+k})$ contains $p^+ + k$ instances of $p^-$,
$p^- - k$ instances of $-p^+$,
and $k$ instances of $p$,
\item
if $k < p^-$, then all vertices of $F_{p+k}$ lie below $\Sigma_1$ except $Q_{p^+}^{p^-}$.
\item
if $k < p^-$, then $Q_{p^-}^0$ belongs to the edge $S_{p^- - 1}$ of $F_{p+k}$,
and $Q_p^0$ belongs to the edge $S_{p - 1}$ of $F_{p+k}$.
\end{enumerate}
\label{le:caseI}
\end{lemma}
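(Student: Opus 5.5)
We argue by induction on $k$. The base case $k=0$ is Lemma \ref{le:verticesFp} together with the $\frac{m}{p}$-form of $F_p$. For (iv) at $k=0$: the only vertices that could lie on or above $\Sigma_1$ are endpoints of $S_{p-1}$, by Definition \ref{df:mpForm}(iii). In case I these endpoints are $Q_{p^-}^{p^+}$, which lies below $\Sigma_1$, and $Q_{p^+}^{p^-}$, which lies above. For (v) at $k=0$, Lemma \ref{le:caseIeasyBit} supplies the needed facts. It gives $z_p\in(z_{p^-},z_{p^+})$, so $Q_p^0$ lies on $S_{p-1}$. Its proof also shows that $Q_{p^-}^0$ lies on $S_{p^--1}$, because the endpoints of that edge straddle $\Sigma_1$.

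For the inductive step from $k<p^-$ to $k+1$, we use \eqref{eq:Fr4}: $F_{p+k+1}=f_R^{-1}(F_{p+k}\cap\{P_2<0\})$. By (iv), every vertex of $F_{p+k}$ except $Q_{p^+}^{p^-}$ lies below $\Sigma_1$. So $\Sigma_1$ cuts off a triangle containing $Q_{p^+}^{p^-}$. The apex $Q_{p^+}^{p^-}$ is shared by edges $S_{p^--1}$ and $S_{p-1}$, and by (v) those edges meet $\Sigma_1$ at $Q_{p^-}^0$ and $Q_p^0$. Truncating therefore replaces the vertex $Q_{p^+}^{p^-}$ by two vertices, $Q_{p^-}^0$ and $Q_p^0$, and adds a new edge on $\Sigma_1$. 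Applying $f_R^{-1}$ shifts every superscript up by one and every edge index up by one, and sends the new edge to $S_0\subset\Sigma_0$. The resulting vertex list is $Q_{p^-}^1,\dots,Q_{p^-}^{p^++k+1}$, then $Q_{p^+}^{k+2},\dots,Q_{p^+}^{p^-}$, then $Q_p^1,\dots,Q_p^{k+1}$. Here the shifted $Q_{p^+}^{p^-}$ has been discarded and the shifted $Q_{p^-}^0$ and $Q_p^0$ have become $Q_{p^-}^1$ and $Q_p^1$. This proves (ii).

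Adolescence (i) follows because the new polygon has $p+k+1$ edges, namely $S_0,\dots,S_{p+k}$. For (iii) we track the cyclic ordering. Shifting indices by one leaves the difference vector unchanged, except near the removed vertex and the inserted edge. Before the cut, the edges $S_{p-1}$ and $S_{p^--1}$ were adjacent, with difference $-p^+$. After the shift and insertion, the sequence $S_p,S_0,S_{p^-}$ appears. The step $S_p\to S_0$ combined with the wrap-around changes the counts in the claimed way: one $-p^+$ is lost, and one $p^-$ and one $p$ are gained. I would check these difference counts by carefully locating $S_0$'s neighbours using Lemma \ref{le:intersectionFormula}. The vertex $Q_p^1$ lies on $\Sigma_0\cap\Sigma_{-p}$ and $Q_{p^-}^1$ lies on $\Sigma_0\cap\Sigma_{-p^-}$.

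The main obstacle is re-establishing (iv) and (v) for $k+1<p^-$. We must show that every new vertex except $Q_{p^+}^{p^-}$ lies strictly below $\Sigma_1$. We must also show that $\Sigma_{-(p^--1)}$ and $\Sigma_{-(p-1)}$ still cross $\Sigma_1$ on the edges adjacent to $Q_{p^+}^{p^-}$. The new vertices $Q_{p^-}^1$ and $Q_p^1$ lie on $\Sigma_0$ at heights $z-1$ with $z\le 1$, strictly below unless $z=1$. We can rule out $z=1$ using $z_{p^-}<z_p<z_{p^+}\le1$. For the remaining vertices, each one is the $f_R^{-1}$-image of a vertex that was below $\Sigma_1$, and $f_R^{-1}$ of the region below $\Sigma_1$ is $\Omega_R$. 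This does not directly give "below $\Sigma_1$", so I would instead argue by convexity. The vertices of $F_{p+k+1}$ lie in the closure of $F_{p+k}$ by nesting. Every vertex of $F_{p+k}$ other than $Q_{p^+}^{p^-}$ is below $\Sigma_1$, and $F_{p+k+1}$ is a proper subset. A new vertex can lie on or above $\Sigma_1$ only if it lies inside the cut-off triangle near $Q_{p^+}^{p^-}$. The only vertex of $F_{p+k+1}$ in that triangle is $Q_{p^+}^{p^-}$ itself, which persists because its two edges $S_{p^--1}$ and $S_{p-1}$ survive with shifted partners. Making this precise, for instance by using the half-planes $U_R^j$ as in the proof of Lemma \ref{le:caseIeasyBit}, is where I expect the real work. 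Once (iv) holds, (v) follows as at $k=0$: the edge $S_{p-1}$, and likewise $S_{p^--1}$, has one endpoint $Q_{p^+}^{p^-}$ above $\Sigma_1$ and the other below, so it crosses $\Sigma_1$ at its intersection with the supporting line, namely $Q_p^0$ (respectively $Q_{p^-}^0$).
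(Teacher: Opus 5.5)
Your treatment of (i)--(iii) is sound, and it is a legitimate alternative to the paper's. You truncate $F_{p+k}$ by $\Sigma_1$ using IH(iv),(v), then pull back by the orientation-preserving map $f_R^{-1}$. The paper instead cuts $F_{p+k}$ along $\Sigma_{-(p+k)}$, using the half-planes $U_R^j$ to show that only $Q_{p^+}^{k+1}$ is removed. One correction in (iii): the inserted step $S_p\to S_0$ contributes $-p$, not $p$. No wrap-around adjustment is involved, because shifting every label by one preserves all differences. The ``$p$'' in the statement is a typo for ``$-p$'': the entries of a difference vector sum to zero, which forces $-p$, and the paper's own proof says ``an additional $-p$''.

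The genuine gap is (iv) at $k+1$, which your induction needs for the next truncation. Your assertion that the only vertex of $F_{p+k+1}$ in the cut-off triangle is $Q_{p^+}^{p^-}$ is exactly what must be proved, and nesting plus convexity does not prove it. Compare the vertex lists. The only vertices of $F_{p+k+1}$ that were not already vertices of $F_{p+k}$ are $Q_p^{k+1}$ and $Q_{p^-}^{p^++k+1}$, and they lie on the edges $S_k$ and $S_{p^++k}$ of $F_{p+k}$. Edges other than $S_{p-1}$ and $S_{p^--1}$ lie strictly below $\Sigma_1$, so convexity suffices in every case but one. That case is $p^->p^+$ with $k=p^--p^+-1$. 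Then the new vertex $Q_{p^-}^{p^-}$ lies on $S_{p^--1}$, the edge that crosses $\Sigma_1$ at $Q_{p^-}^0$. Nothing in your argument decides on which side of $Q_{p^-}^0$ it lands, and if it landed on or above $\Sigma_1$, (iv) would fail.

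The paper closes this case with an orientation argument. Since $f_R^{p^-}$ acts about $X$ as a rotation--dilation with no real eigenvector, the sense of turning from $f_R^{p^-}(P)$ to $P$ relative to $X$ is the same for all $P\neq X$. The case-I configuration ($Q_{p^+}^{p^-}$ above $\Sigma_1$ on $S_{p-1}$, together with $z_p<z_{p^+}$) shows that $Q_{p^+}^{p^-}$ is anticlockwise from $Q_{p^+}^0=f_R^{p^-}(Q_{p^+}^{p^-})$. Hence $Q_{p^-}^{p^-}$ is anticlockwise from $Q_{p^-}^0$, which along $\Sigma_{-(p^--1)}$ means strictly below $\Sigma_1$. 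Some input of this kind, beyond convexity, is needed to close your induction.
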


\begin{proof}
We prove Lemma \ref{le:caseI} by induction on $k$.
With $k = 0$, part (i) of Lemma \ref{le:caseI} is true because $F_p$ has $\frac{m}{p}$-form,
part (ii) is true by Lemma \ref{le:verticesFp}(ii),
part (iii) is true by Lemma \ref{le:differences},
part (iv) is true by Definition \ref{df:mpForm}(iii) and the assumptions in case I,
and (v) is true because $Q_{p^-}^0 \in S_{p^- - 1}$ and $Q_p^0 \in S_{p-1}$,
as noted in the proof of Lemma \ref{le:caseIeasyBit}.

Thus Lemma \ref{le:caseI} is true for $k = 0$.
Suppose Lemma \ref{le:caseI} is true for some $k \in \{ 0,1,\ldots,p^- - 1 \}$.
This is our induction hypothesis.
For brevity, we write IH(i) for the assumption that part (i) is true for the given value of $k$,
and write $\text{IH(ii)},\ldots,\text{IH(v)}$ analogously.
To complete the proof we verify (i)--(v) for $k+1$.

By IH(ii), the point $Q_{p^+}^{k+1}$ is a vertex of $F_{p+k}$.
By mapping $Q_{p^+}^0 \in U_L^{p-1}$ (Lemma \ref{le:caseIeasyBit})
under $f_R^{-(k+1)}$, we obtain $Q_{p^+}^{k+1} \in U_L^{p+k}$.
We now show every other vertex of $F_{p+k}$ belongs instead to $U_R^{p+k}$.
For any $j$ with $k+2 \le j \le p^-$,
by Lemma \ref{le:verticesFp} the point $Q_{p^+}^{j-k-1}$
is a vertex of $F_p$ not belonging the edge $S_{p-1}$ of $F_p$,
thus $Q_{p^+}^{j-k-1} \in U_R^{p-1}$ by \eqref{eq:FrU},
and by mapping this under $f_R^{-(k+1)}$ we obtain $Q_{p^+}^j \in U_R^{p+k}$.
For any $j$ with $1 \le j \le p^+ + k$,
by IH(ii) and IH(v) the point $Q_{p^-}^{j-1}$ belongs
to the boundary of $F_{p+k}$ but not the edge $S_{p+k-1}$ of $F_{p+k}$,
thus $Q_{p^-}^{j-1} \in U_R^{p+k-1}$ by \eqref{eq:FrU},
and by mapping this under $f_R^{-1}$ we obtain $Q_{p^-}^j \in U_R^{p+k}$.
Similarly for any $j$ with $1 \le j \le k$,
by IH(ii) and IH(v) the point $Q_p^{j-1}$ belongs
to the boundary of $F_{p+k}$ but not the edge $S_{p+k-1}$ of $F_{p+k}$,
thus $Q_p^{j-1} \in U_R^{p+k-1}$ by \eqref{eq:FrU},
and by mapping this under $f_R^{-1}$ we obtain $Q_p^j \in U_R^{p+k}$.

By \eqref{eq:FrU}, $F_{p+k+1}$ is formed by cutting $F_{p+k}$ along $\Sigma_{-(p+k)}$
and retaining the part in $U_R^{p+k}$.
Since $Q_{p^+}^{k+1}$ belongs to $U_L^{p+k}$,
while every other vertex of $F_{p+k}$ belongs to $U_R^{p+k}$,
the only edges of $F_{p+k}$ that meet $\Sigma_{-(p+k)}$
are those adjacent to the `cut vertex' $Q_{p^+}^{k+1}$.
By \eqref{eq:intersectionFormula} and IH(iii),
the edge of $F_{p+k}$ clockwise from $Q_{p^+}^{k+1}$ is $S_{p^+ + k}$,
and the edge of $F_{p+k}$ anticlockwise from $Q_{p^+}^{k+1}$ is $S_k$.
The new edge (of $F_{p+k+1}$) is a subset of $\Sigma_{-(p+k)}$,
so meets $S_{p^+ + k}$ at $Q_{p^-}^{p^+ + k + 1}$ and $S_k$ at $Q_p^{k + 1}$.

So in going from $F_{p+k}$ to $F_{p+k+1}$,
we gain an edge along $\Sigma_{-(p+k)}$ verifying (i).
Also we lose the vertex $Q_{p^+}^{k+1}$
and gain vertices $Q_{p^-}^{p^+ + k + 1}$ and $Q_p^{k + 1}$, verifying (ii).
The difference vector $\partial \nu(F_{p+k+1})$ is the same as $\partial \nu(F_{p+k})$
but has one less $-p^+$,
an additional $p^-$,
and an additional $-p$, verifying (iii).

For the remainder of proof we assume $k < p^- - 1$.
By IH(iv), the only edges of $F_{p+k}$ that contain points above $\Sigma_1$
are $S_{p-1}$ and $S_{p^- - 1}$ adjacent to $Q_{p^+}^{p^-}$.
If the cut vertex is not an endpoint of $S_{p-1}$ or $S_{p^- - 1}$,
then the new vertices lie below $\Sigma_1$ verifying (iv),
and (v) is an immediate consequence of IH(v) because $Q_{p^-}^0, Q_p^0 \in \Sigma_1$.

The cut vertex is $Q_{p^+}^{k+1}$ and the endpoints of $S_{p-1}$ are $Q_{p^+}^{p^-}$ and $Q_{p^-}^{p^+}$,
so the cut vertex is not an endpoint of $S_{p-1}$ because $k \ne p^- - 1$ and $p^+ \ne p^-$.
The endpoints of $S_{p^- - 1}$ are $Q_{p^+}^{p^-}$
and either $Q_{p^-}^{p^-}$ (occurring if $p^- < p^+$)
or $Q_{p^+}^{p^- - p^+}$ (occurring if $p^- > p^+$).
Of these the cut vertex can only be $Q_{p^+}^{p^- - p^+}$ when $p^- > p^+$ and $k = p^- - p^+ - 1$.
It remains to show that in this case the new vertex
$Q_{p^-}^{p^-} \in S_{p^- - 1}$ lies below $\Sigma_1$ for this will verify (iv) and (v).

Given $P, Q \in \mathbb{R}^2$,
we say $Q$ is {\em anticlockwise from $P$ relative to $X$}
if $P$, $Q$, and $X$ are the vertices of a triangle,
and as we go from $P$ to $Q$ we travel anticlockwise around one edge of the triangle.
The point $Q_{p^+}^{p^-}$ is anticlockwise from $Q_p^0$ relative to $X$,
due to the position of these points on $S_{p-1}$.
Also $z_p < z_{p^+}$ (Lemma \ref{le:caseIeasyBit}),
thus $Q_{p^+}^{p^-}$ is anticlockwise from $Q_{p^+}^0$ relative to $X$.
But $Q_{p^+}^0 = f_R^{p^-} \left( Q_{p^+}^{p^-} \right)$,
and $f_R^{p^-}$ is a linear rotation centred about $X$
(because $\delta_R > \frac{\tau_R^2}{4}$, see Definition \ref{df:mpForm}).
Thus {\em every} $P \ne X$ is anticlockwise from $f_R^{p^-}(P)$ relative to $X$.
In particular, $Q_{p^-}^{p^-}$ is anticlockwise from
$Q_{p^-}^0 = f_R^{p^-} \left( Q_{p^-}^{p^-} \right) \in \Sigma_1$ relative to $X$,
thus $Q_{p^-}^{p^-}$ lies below $\Sigma_1$.
\end{proof}

\begin{proof}[Proof of Proposition \ref{pr:threeCases}]
The polygon $F_p$ is the interior of the convex hull of its vertices.
By Lemma \ref{le:verticesFp}(ii), these vertices are
$Q_{p^-}^1, Q_{p^-}^2, \ldots, Q_{p^-}^{p^+}$ and $Q_{p^+}^1, Q_{p^+}^2, \ldots, Q_{p^+}^{p^-}$.
Since $f_R$ is affine, $f_R(F_p)$ is the interior
of the convex hull of the images of the these points under $f_R$:
$Q_{p^-}^0, Q_{p^-}^1, \ldots, Q_{p^-}^{p^+ - 1}$ and $Q_{p^+}^0, Q_{p^+}^1, \ldots, Q_{p^+}^{p^- - 1}$.

Suppose for a contradiction that $Q_{p^-}^{p^+}$ and $Q_{p^+}^{p^-}$ both lie on or above $\Sigma_1$.
In this case the entire edge $S_{p-1}$ of $F_p$ lies on or above $\Sigma_1$,
while all other vertices of $F_p$ lies below $\Sigma_1$ by Definition \ref{df:mpForm}(iii).
The edge clockwise from $S_{p-1}$ meets $\Sigma_1$ at $Q_{p^+}^0$,
while the edge anticlockwise from $S_{p-1}$ meets $\Sigma_1$ at $Q_{p^-}^0$.
Thus the part of $F_p$ that lies below $\Sigma_1$ is the interior of the convex hull of
$Q_{p^-}^0, Q_{p^-}^1, \ldots, Q_{p^-}^{p^+ - 1}$ and $Q_{p^+}^0, Q_{p^+}^1, \ldots, Q_{p^+}^{p^- - 1}$.
That is, $f_R(F_p) \subset F_p$.
This is a contradication because $\delta_R > 1$ (see Definition \ref{df:mpForm})
so $f_R$ is area-expanding.

Now suppose $Q_{p^-}^{p^+}$ or $Q_{p^+}^{p^-}$ lie on or below $\Sigma_1$ (case III).
In this case $F_p$ lies entirely below $\Sigma_1$.
For any $j \ge p$, $E_j \subset F_p$, by \eqref{eq:Fr},
thus $E_j \cap \Sigma_1 = \varnothing$,
and hence $I_{j+1} = \varnothing$ by \eqref{eq:In2}.
So $J_p = I_p$ by \eqref{eq:J},
and $I_p = \left( z_{p^-}, z_{p^+} \right)$ by Lemma \ref{le:Jp}.

Finally suppose $Q_{p^-}^{p^+}$ is located below $\Sigma_1$,
and $Q_{p^+}^{p^-}$ is located above $\Sigma_1$ (case I).
(We omit a verification for case II as this can be proved in the same fashion as case I.)
In view of Lemma \ref{le:caseIeasyBit}, it remains to show
$F_{p^- + p}$ has $\frac{m^- + m}{p^- + p}$-form.
To do this we use Lemma \ref{le:caseI}
to show that $F_{p^- + p}$ satisfies all three properties of Definition \ref{df:mpForm}.

By Lemma \ref{le:caseI}(i), $F_{p^- + p}$ is adolescent.
By Lemma \ref{le:caseI} (iii),
$\partial \nu \left( F_{p^- + p} \right)$ contains $p^+ + p^- = p$ instances of $p^-$,
and $p^-$ instances of $p$.
This permutation also starts with $0$ by definition,
thus $\nu \left( F_{p^- + p} \right) = \cP_{m^- + m, p^- + p}$ by Lemma \ref{le:differences}.
The upper vertex of the edge $S_0$ of $F_{p^- + p}$ lies on or below $\Sigma_1$
because this property holds for $F_p$, which contains $F_{p^- + p}$.
Finally, by Lemma \ref{le:caseI}(iv) with $k = p^- - 1$,
all vertices of $F_{p^- + p - 1}$ lie below $\Sigma_1$ except $Q_{p^+}^{p^-}$
(which is not a vertex of $F_{p^- + p}$).
Thus all vertices of $F_{p^- + p}$ lie below $\Sigma_1$,
except possibly those belonging to the edge $S_{p^- + p -1}$.
\end{proof}

\subsection{Final arguments}
\label{sub:final}

\begin{definition}
An irreducible fraction $\frac{m}{p} \in \left( 0, \frac{1}{n^*} \right)$
is a {\em constituent} of $f_R$ if
\begin{enumerate}[label=\roman*),ref=\roman*,itemsep=0mm]
\item
$F_p$ has $\frac{m}{p}$-form,
\item
for any $n < p$, $I_n \ne \varnothing$ if and only if $n \in D[m,p;n^*]$,
\item
$J_p$ and the non-empty $I_n$ with $n < p$ are ordered by $D[m,p;n^*]$.
\end{enumerate}
\label{df:atom}
\end{definition}

\begin{lemma}
Suppose $\frac{m}{p}$ is a constituent of $f_R$.
\begin{enumerate}[label=\Roman*),ref=\Roman*,itemsep=0mm]
\item
If $Q_{p^-}^{p^+}$ is located below $\Sigma_1$, and $Q_{p^+}^{p^-}$ is located above $\Sigma_1$,
then $\frac{m^- + m}{p^- + p}$ is a constituent of $f_R$.
\item
If $Q_{p^-}^{p^+}$ is located above $\Sigma_1$, and $Q_{p^+}^{p^-}$ is located below $\Sigma_1$,
then $\frac{m + m^+}{p + p^+}$ is a constituent of $f_R$.
\end{enumerate}
\label{le:inductiveStep}
\end{lemma}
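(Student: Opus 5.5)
Let $\frac{m'}{p'} = \frac{m^- + m}{p^- + p}$ denote the new fraction in case I. We verify the three conditions of Definition \ref{df:atom} for it; case II is symmetric, with $\frac{m + m^+}{p + p^+}$ in place of $\frac{m'}{p'}$ and the roles of left and right exchanged.

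\textbf{Condition (i) and the Farey parents.} Condition (i) is exactly the last conclusion of Proposition \ref{pr:threeCases}(I). By Proposition \ref{pr:FareySumIrreducible}, $\frac{m^-}{p^-}$ and $\frac{m}{p}$ are Farey neighbours. Hence, by the uniqueness in Proposition \ref{pr:FareyParents}, the left parent of $\frac{m'}{p'}$ is $\frac{m^-}{p^-}$ and its right parent is $\frac{m}{p}$. Following left parents from $\frac{m'}{p'}$ therefore reproduces the left chain of $\frac{m}{p}$. Following right parents first reaches $\frac{m}{p}$ and then continues along the right chain of $\frac{m}{p}$. So
\[
D[m',p';n^*] = \left( p^-_s, \ldots, p^-_1, p', p, p^+_1, \ldots, p^+_t \right),
\]
which is $D[m,p;n^*]$ with $p'$ inserted immediately to the left of $p$. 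We must also check $\frac{m'}{p'} \in \left( 0, \frac{1}{n^*} \right)$; this holds because $\frac{m'}{p'}$ lies strictly between $\frac{m^-}{p^-}$ and $\frac{m}{p}$.

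\textbf{Intervals with index between $p$ and $p'$.} Lemma \ref{le:Jp} applied to $F_p$ gives $J_p = \left( z_{p^-}, z_{p^+} \right)$. Applied to $F_{p'}$, whose parents are $\frac{m^-}{p^-}$ and $\frac{m}{p}$, it gives $J_{p'} = \left( z_{p^-}, z_p \right)$. Lemma \ref{le:caseIeasyBit} gives $I_p = \left[ z_p, z_{p^+} \right)$ and $z_p \in \left( z_{p^-}, z_{p^+} \right)$. Since $J_p$ is the disjoint union of $I_p$, of $I_n$ for $p < n < p'$, and of $J_{p'}$, we get
\[
\bigcup_{p<n<p'} I_n = \left( z_{p^-}, z_{p^+} \right) \setminus \left( \left[ z_p, z_{p^+} \right) \cup \left( z_{p^-}, z_p \right) \right) = \varnothing .
\]
Hence $I_n = \varnothing$ for all $p < n < p'$.

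\textbf{Conditions (ii) and (iii).} For $n < p$, condition (ii) for $\frac{m}{p}$ already says $I_n \neq \varnothing$ if and only if $n \in D[m,p;n^*]$. Combined with the previous paragraph and with $I_p \neq \varnothing$, this shows that for $n < p'$ we have $I_n \neq \varnothing$ if and only if $n \in D[m',p';n^*]$. For ordering, condition (iii) for $\frac{m}{p}$ places $J_p$ in the slot of $p$ within $D[m,p;n^*]$. Inside that slot, $J_p = J_{p'} \cup I_p$, with $J_{p'}$ lying entirely to the left of $I_p$ and the two sharing the endpoint $z_p$. Replacing $J_p$ by the pair $(J_{p'}, I_p)$ in that order yields exactly the ordering $D[m',p';n^*]$ found above.

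\textbf{Expected difficulty.} The only delicate point is the bookkeeping that identifies the parents of $\frac{m'}{p'}$, which requires $\frac{m^-}{p^-}$ and $\frac{m}{p}$ to be neighbours. Proposition \ref{pr:FareySumIrreducible} supplies this, so I expect no real obstacle beyond careful indexing.
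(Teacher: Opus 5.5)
Your proposal is correct and follows essentially the same route as the paper: condition (i) comes from Proposition \ref{pr:threeCases}(I), and then $J_p = (z_{p^-},z_{p^+})$, $J_{p^-+p} = (z_{p^-},z_p)$ and $I_p = [z_p,z_{p^+})$ split the slot of $J_p$ into $J_{p^-+p}$ followed by $I_p$, which matches inserting $p^-+p$ immediately before $p$ in the denominator list. You also spell out points the paper leaves implicit: the identification of the parents of $\frac{m^-+m}{p^-+p}$, the membership $\frac{m^-+m}{p^-+p} \in (0,\frac{1}{n^*})$, and the vanishing of $I_n$ for $p<n<p^-+p$.
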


\begin{proof}
By Definition \ref{df:atom}, $F_p$ has $\frac{m}{p}$-form,
so $J_p = \left( z_{p^-}, z_{p^+} \right)$ by Lemma \ref{le:Jp}.
Consider case I that $Q_{p^-}^{p^+}$ is located below $\Sigma_1$,
and $Q_{p^+}^{p^-}$ is located above $\Sigma_1$
(case II can be proved in the same fashion).
It remains to show $\frac{m^- + m}{p^- + p}$ obeys Definition \ref{df:atom}.

By Proposition \ref{pr:threeCases}(I),
$F_{p^- + p}$ has $\frac{m^- + m}{p^- + p}$-form,
verifying condition (i) of Definition \ref{df:atom}.
Also $I_p = \left[ z_p, z_{p^+} \right)$, while Lemma \ref{le:Jp} implies
$J_p = \left( z_{p^-}, z_{p^+} \right)$ and $J_{p^- + p} = \left( z_{p^-}, z_p \right)$.

Since $\frac{m}{p}$ is a constituent,
$J_p$ and the nonempty $I_n$ with $n < p$ are ordered by $D[m,p;n^*]$.
By replacing $J_p$ with $J_{p^- + p}$ and $I_p$ (in that order),
we obtain $J_{p^- + p}$ and the nonempty $I_n$ with $n < p^- + p$.
These intervals are ordered by $D[m^- + m,p^- + p;n^*]$
because by Definition \ref{df:denominatorList}
this list can be formed by taking $D[m,p;n^*]$
and inserting $p^- + p$ immediately before $p$.
This verifies conditions (ii) and (iii) of Definition \ref{df:atom} for $\frac{m^- + m}{p^- + p}$.
\end{proof}

\begin{proof}[Proof of Theorem \ref{th:existence}]
We first show that $\frac{1}{n^* + 1}$ is a constituent of $f_R$.
Recall $I_1 = (-\infty,0]$, while by Proposition \ref{pr:nStar},
$I_n = \varnothing$ for all $n = 2,3,\ldots, n^* - 1$, and
$I_{n^*} = \left[ z_{n^*}, 1 \right]$, where $0 < z_{n^*} \le 1$.

By Lemma \ref{le:baseCase}, $F_{n^*+1}$ has $\frac{1}{n^* + 1}$-form. 
Then by Lemma \ref{le:Jp}, $J_{n^*+1} = \left( z_1, z_{n^*} \right)$, where $z_1 = 0$.
By Definition \ref{df:denominatorList}, $D[1,n^*+1;n^*] = (1,n^*+1,n^*)$,
thus $\frac{1}{n^* + 1}$ is a constituent of $f_R$ by definition.

We now apply Lemma \ref{le:inductiveStep} recursively
starting with $\frac{m}{p} = \frac{1}{n^* + 1}$ to generate a sequence of constituents.
Each constituent belongs to $\left[ \frac{1}{n^*+1}, \frac{1}{n^*} \right)$
because case I cannot occur in our first application of Lemma \ref{le:inductiveStep}
since $Q_{n^*}^1 = \left( 0, z_{n^*} - 1 \right)$ lies on or below $\Sigma_1$.
The sequence of constituents is finite for otherwise $I_n \ne \varnothing$
for arbitrarily large values of $n$, which is not possible because $f_R$ is a repelling focus.
That is, the sequence terminates at a constituent
$\frac{m}{p} \in \left[ \frac{1}{n^*+1}, \frac{1}{n^*} \right)$
that does not satisfy cases I or II of Lemma \ref{le:inductiveStep}.

This value satisfies case III of Proposition \ref{pr:threeCases}.
Thus $I_p = J_p$, using also Lemma \ref{le:Jp},
so $I_n = \varnothing$ for all $n > p$.
Then by property (ii) of a constituent,
$I_n = \varnothing$ if and only if $n \in D[m,p;n^*]$ for all $n \ge 1$.
By property (iii) of a constituent,
the nonempty $I_n$ are ordered by $D[m,p;n^*]$.
The value $\frac{m}{p}$ in Theorem \ref{th:existence} is unique
because any other irreducible fraction yields a different denominator list.
\end{proof}

\begin{proof}[Proof of Theorem \ref{th:construct}]
Step 1 of Theorem \ref{th:construct} generates
$I_1 = (-\infty,0]$ and $I_{n^*} = \left[ z_{n^*}, 1 \right]$,
in accordance with Proposition \ref{pr:nStar}.
Steps 2 and 3 generate a sequence of irreducible fractions
starting with $\frac{m}{p} = \frac{1}{n^* + 1}$.

In each instance of Step 3,
$z^- = z_{p^-}$ and $z^+ = z_{p^+}$.
If $\alpha_{p-1} < 0$ and $z_p \in (z^-, z^+)$,
then by \eqref{eq:fRr1} $f_R^{p-1}(z_p + \ee, 0) \in \Omega_L$ and
$f_R^{p-1}(z_p - \ee, 0) \in \Omega_R$ for $\ee > 0$.
Thus $z_p + \ee \in I_p$ and $z_p - \ee \notin I_p$ for sufficiently small $\ee > 0$,
so case I of Proposition \ref{pr:threeCases} occurs.
Step 3 generates $I_p = [z_p,z^+)$, as in Proposition \ref{pr:threeCases},
and the next irreducible fraction is $\frac{m^- + m}{p^- + p}$,
listed in case I of Lemma \ref{le:inductiveStep}.

If $\alpha_{p-1} > 0$ and $z_p \in (z^-, z^+)$,
then case II of Proposition \ref{pr:threeCases} occurs.
Step 3 generates $I_p = (z^-, z_p]$, as in Proposition \ref{pr:threeCases},
and increments to $\frac{m + m^+}{p + p^+}$,
listed in case II of Lemma \ref{le:inductiveStep}.

Otherwise $\alpha_{p-1} = 0$ or $z_p \notin (z^-, z^+)$,
corresponding to case III of Proposition \ref{pr:threeCases}.
Step 3 generates $I_p = (z^-,z^+)$,
as in Proposition \ref{pr:threeCases},
then outputs $\frac{m}{p}$ and terminates the algorithm.

In conclusion, the sequence of irreducible fractions
generated by the algorithm in Theorem \ref{th:construct}
is identical to the sequence of constituents constructed
in the proof of Theorem \ref{th:existence}.
Therefore, the fraction $\frac{m}{p}$ generated by the algorithm
is the same as that of Theorem \ref{th:existence}.
Moreover, in Step 1 and in each instance of Step 3
the algorithm correctly generates all nonempty $I_n$
by Propositions \ref{pr:nStar} and \ref{pr:threeCases}. 
\end{proof}

\section{Additional results}
\label{sec:more}

\subsection{Minimal polygons}
\label{sub:minimalPolygons}

By definition, $F_0 = \mathbb{R}^2$, and
\begin{equation}
F_r = \left\{ P \in \mathbb{R}^2 \,\middle|\, f_R^j(P) \in \Omega_R \text{~for all $j = 0,1,\ldots,r-1$} \right\},
\label{eq:Fr5}
\end{equation}
for all $r \ge 1$.
If $\delta_R \ne 0$, then we can instead use backwards iterates, defining $G_0 = \mathbb{R}^2$ and
\begin{equation}
G_r = \left\{ P \in \mathbb{R}^2 \,\middle|\, f_R^{-j}(P) \in \Omega_R \text{~for all $j = 0,1,\ldots,r-1$} \right\},
\label{eq:Gr5}
\end{equation}
for all $r \ge 1$.
These sets are bounded by the images $\Sigma_r = f_R^r(\Sigma_0)$ of $\Sigma_0$ under $f_R$.

Now suppose $\delta_R > 0$ and $\delta_R > \frac{\tau_R^2}{4}$, so that $X$ is a repelling focus belonging to $\Omega_R$.
For all $r \ge 0$,
if $\Sigma_r$ intersects $G_r$,
then $G_{r+1}$ is the result of cutting $G_r$ along $\Sigma_r$
and retaining the component that contains $X$.
The lines $\Sigma_r$ grow more distant from $X$ as $r$ increases,
so eventually no more cuts are taken,
and all subsequent $G_r$ are identical.
That is, the $G_r$ converge in finitely many iterations to the set
\begin{equation}
G = \left\{ P \in \mathbb{R}^2 \,\middle|\, f_R^{-j}(P) \in \Omega_R \text{~for all $j \ge 0$} \right\},
\label{eq:G}
\end{equation}
which is necessarily a polygon.

Szalai and Osinga \cite{SzOs08} refer to $G$ as the {\em minimal polygon}.
They use the assumption that its
edges are ordered by a rotation permutation $\cP_{m,p}$
to determine which edges may vanish under parameter variation.
Here we prove this characterisation of the edges.

\begin{proposition}
Suppose $\delta_R > 1$ and $\delta_R > \frac{\tau_R^2}{4}$,
and let $\frac{m}{p}$ be as in Theorem \ref{th:existence}.
Then $G$ is a $p$-sided polygon
whose edges are subsets of $\Sigma_0, \Sigma_1, \ldots, \Sigma_{p-1}$ ordered clockwise by $\cP_{m,p}$.
Moreover,
\begin{equation}
G = f_R^{p-1}(F_p).
\label{eq:G6}
\end{equation}
\label{pr:Gr}
\end{proposition}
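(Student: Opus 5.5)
Our plan is to first prove \eqref{eq:G6} and then read off the edge structure of $G$ from the $\frac{m}{p}$-form of $F_p$. Apply $f_R^{p-1}$ to \eqref{eq:Fr5} with $r = p$. Setting $Q = f_R^{p-1}(P)$, the condition $f_R^j(P) \in \Omega_R$ for $j = 0,\ldots,p-1$ becomes $f_R^{-i}(Q) \in \Omega_R$ for $i = 0,\ldots,p-1$. Hence
\[
f_R^{p-1}(F_p) = G_p ,
\]
and this step is purely formal. It remains to show that $G_r = G_p$ for all $r \ge p$, so that $G = G_p$.

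\textbf{Stabilisation of the $G_r$.} In the proof of Theorem \ref{th:existence}, the terminal constituent $\frac{m}{p}$ satisfies case III of Proposition \ref{pr:threeCases}. So $F_p$ lies entirely below $\Sigma_1$, in the sense that its closure has at most a vertex on $\Sigma_1$. By \eqref{eq:Fr4} we then get $F_{p+1} = f_R^{-1}(F_p)$, and inductively $F_{p+k} = f_R^{-k}(F_p)$; this is the stabilisation already observed for Fig.~\ref{fig:divN}. Translating this into the $G_r$, I would use the symmetric identity
\[
G_{r+1} = \{ f_R(P) \mid P \in G_r ,\; f_R(P) \in \Omega_R \},
\]
together with the analogue $f_R^{p-1+k}(F_{p+k}) = G_{p+k}$, which holds by the same formal computation. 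This gives
\[
G_{p+k} = f_R^{p-1+k}\big(f_R^{-k}(F_p)\big) = f_R^{p-1}(F_p) = G_p .
\]
Thus $G = \bigcap_r G_r = G_p = f_R^{p-1}(F_p)$, which proves \eqref{eq:G6}.

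\textbf{Edges.} Since $F_p$ has $\frac{m}{p}$-form, it is a $p$-sided polygon with edges $S_j \subset \Sigma_{-j}$, ordered anticlockwise from $S_0$ by $\cP_{m,p}$. Under $f_R^{p-1}$ the edge $S_j$ maps into $\Sigma_{p-1-j}$, and the affine map $f_R$ has positive determinant $\delta_R > 0$, so it preserves orientation. Therefore $G$ is a $p$-sided polygon with edges on $\Sigma_0,\ldots,\Sigma_{p-1}$, and going anticlockwise the edge indices are $p-1-\cP_{m,p}(i)$. Reversing to a clockwise reading and starting from the edge on $\Sigma_0$ (the image of $S_{p-1}$), the main obstacle is a combinatorial one. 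We must check that the sequence $i \mapsto p-1-\cP_{m,p}(i)$, read backwards and cyclically shifted so as to start at $0$, is again $\cP_{m,p}$. For this we use Lemma \ref{le:differences}. Negating the entries and reversing the order sends each step $p^-$ to $p^-$ and each step $-p^+$ to $-p^+$: a step $a \to a + p^-$ becomes $(p-1-a-p^-) \to (p-1-a)$, which is again an increase of $p^-$. So the transformed sequence is a cyclic permutation of $\mathbb{Z}_p$ whose difference vector takes only the values $p^-$ and $-p^+$. After rotating it to start at $0$, Lemma \ref{le:differences} identifies it as $\cP_{m,p}$, which completes the proof.
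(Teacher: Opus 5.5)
Your proposal is correct and follows the paper's proof. Both use the formal identity $G_r = f_R^{r-1}(F_r)$, then stabilisation via $F_{p+k} = f_R^{-k}(F_p)$ from case III (the paper's Lemma \ref{le:adultFp}), and then transport the $\frac{m}{p}$-form edge ordering of $F_p$ under the orientation-preserving map $f_R^{p-1}$ with the reindexing $T_j = f_R^{p-1}(S_{p-1-j})$. Your Lemma \ref{le:differences} check that the reversed, complemented sequence is again $\cP_{m,p}$ spells out a step the paper asserts in one line.
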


Fig.~\ref{fig:divU} shows $G$ and $F_p$ for parameter point {\bf c},
for which $\frac{m}{p} = \frac{2}{9}$.
The edges of $G$ are labelled $T_0,T_1,\ldots,T_8$, where $T_j \subset \Sigma_j$ for each $j$.
The edges are ordered clockwise by $\cP_{2,9} = (0,5,1,6,2,7,3,8,4)$
in accordance with Proposition \ref{pr:Gr}.

\begin{figure}[b!]
\begin{center}
\includegraphics[width=9cm]{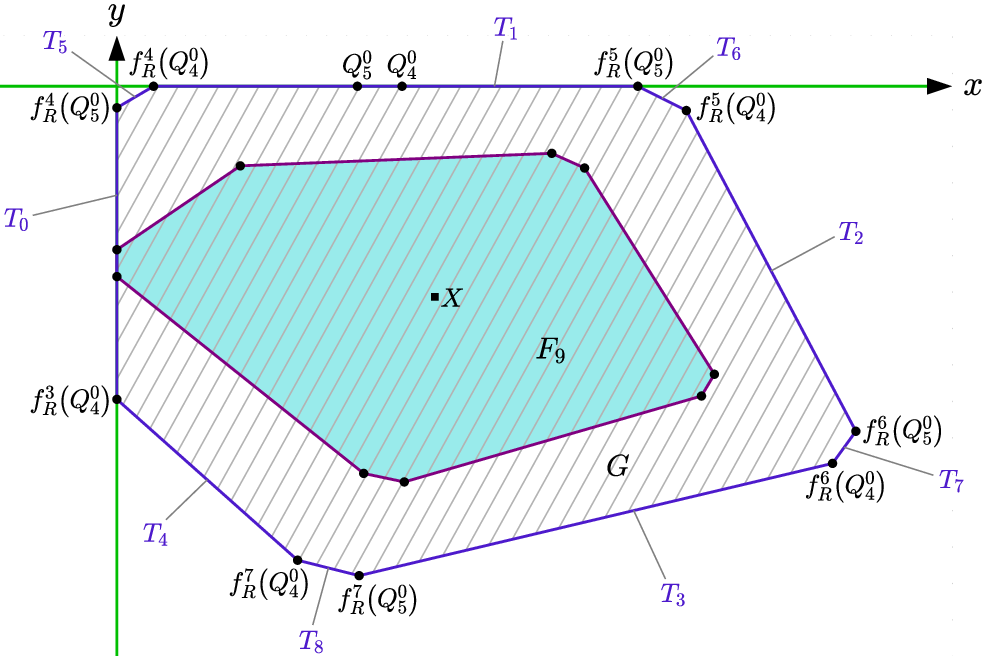}
\caption{
The minimal polygon $G$ (striped)
and $F_9 = f_R^{-8}(G)$ (shaded)
for parameter point {\bf c}.
The edges of $G$ are $T_j \subset \Sigma_j$ for $j = 0,1,\ldots,8$.
The vertices of $G$ belong to the forward orbits of $Q_4^0$ and $Q_5^0$ under $f_R$.
\label{fig:divU}
} 
\end{center}
\end{figure}

To prove Proposition \ref{pr:Gr} we first establish the following result.

\begin{lemma}
Suppose $\delta_R > 1$ and $\delta_R > \frac{\tau_R^2}{4}$,
and let $\frac{m}{p}$ be as in Theorem \ref{th:existence}.
The set $F_p$ has $\frac{m}{p}$-form and lies below $\Sigma_1$.
Moreover, $F_{p+k} = f_R^{-k}(F_p)$ for all $k \ge 0$.
\label{le:adultFp}
\end{lemma}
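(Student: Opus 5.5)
The plan is to reuse the chain of constituents built in the proof of Theorem \ref{th:existence}. That chain starts at $\frac{1}{n^*+1}$ and advances by Lemma \ref{le:inductiveStep}. It terminates at a constituent $\frac{m}{p}$, which by Definition \ref{df:atom}(i) has the property that $F_p$ has $\frac{m}{p}$-form. This constituent is the rotation number of Theorem \ref{th:existence}: that fraction is unique, and the terminal constituent satisfies the theorem. So the first claim of the lemma is immediate.

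Next I show that $F_p$ lies below $\Sigma_1$. The terminal constituent falls in case III of Proposition \ref{pr:threeCases}, so the vertices $Q_{p^-}^{p^+}$ and $Q_{p^+}^{p^-}$ (the endpoints of $S_{p-1}$) lie on or below $\Sigma_1$. By Definition \ref{df:mpForm}(iii), any vertex of $F_p$ on or above $\Sigma_1$ must be an endpoint of $S_{p-1}$. Hence every vertex of $F_p$ lies on or below $\Sigma_1$. Since $F_p$ is the interior of the convex hull of its vertices, $F_p$ is contained in the open half-plane $\{y<0\}$; a boundary vertex on $\Sigma_1$ does not spoil this because $F_p$ is open. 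A few words are needed for the degenerate possibility that an entire edge lies on $\Sigma_1$. Even then the open interior stays strictly below $\Sigma_1$, which is all that is required.

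Finally I prove $F_{p+k}=f_R^{-k}(F_p)$ by induction on $k$, with $k=0$ trivial. Suppose $F_{p+k}=f_R^{-k}(F_p)$ and $F_{p+k}$ lies below $\Sigma_1$. Then the restriction $P_2<0$ in \eqref{eq:Fr4} is vacuous, so $F_{p+k+1}=f_R^{-1}(F_{p+k})=f_R^{-(k+1)}(F_p)$. It remains to propagate the hypothesis that $F_{p+k+1}$ lies below $\Sigma_1$. This is immediate from nestedness: $F_{p+k+1}\subset F_p$, and $F_p$ lies below $\Sigma_1$.

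The main obstacle is the middle step, namely checking carefully that case III together with condition (iii) of the $\frac{m}{p}$-form really puts every vertex on or below $\Sigma_1$. In particular, the upper vertex of $S_0$ must be covered as well. That vertex is handled directly by the first clause of Definition \ref{df:mpForm}(iii), and it is also an endpoint of $S_{p-1}$ only in the base case $p=n^*+1$.
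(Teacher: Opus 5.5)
Your proposal is correct and follows essentially the same route as the paper. You identify $\frac{m}{p}$ as the terminal constituent, which falls in case III, and combine this with Definition \ref{df:mpForm}(iii) to place $F_p$ below $\Sigma_1$; you then iterate \eqref{eq:Fr4}, using the nestedness $F_{p+k}\subset F_p$ to keep the condition $P_2<0$ vacuous. Your version simply spells out the vertex/convex-hull argument that the paper compresses into a single line.
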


\begin{proof}
In the proof of Theorem \ref{th:existence},
$\frac{m}{p}$ was constructed as a constituent conforming to
case III of Proposition \ref{pr:threeCases}.
Thus $F_p$ has $\frac{m}{p}$-form, by Definition \ref{df:atom},
with $S_{p-1}$ situated on or below $\Sigma_1$.
Thus $F_p$ lies below $\Sigma_1$ by Definition \ref{df:mpForm}.
Then by \eqref{eq:Fr4}, $F_{p+1} = f_R^{-1}(F_p)$,
which lies below $\Sigma_1$ because $F_{p+1} \subset F_p$ by \eqref{eq:Fr}.
Thus we can apply \eqref{eq:Fr4} repeatedly giving
$F_{p+k} = f_R^{-k}(F_p)$ for all $k \ge 0$.
\end{proof}

\begin{proof}[Proof of Proposition \ref{pr:Gr}]
By \eqref{eq:Fr5} and \eqref{eq:Gr5},
\begin{equation}
G_r = f_R^{r-1}(F_r), \qquad \text{for all $r \ge 1$}.
\label{eq:Gr6}
\end{equation}
Choose any $k \ge 0$ and substitute $r = p+k$ into \eqref{eq:Gr6}.
Since $F_{p+k} = f_R^{-k}(F_p)$ (Lemma \ref{le:adultFp}),
this gives $G_{p+k} = f_R^{p-1}(F_p)$.
This identity holds for all $k \ge 0$, verifying \eqref{eq:G6}.

Since $F_p$ has $\frac{m}{p}$-form (Lemma \ref{le:adultFp}),
it is a $p$-sided polygon whose edges $S_j \in \Sigma_{-j}$ are 
ordered anticlockwise by $\cP_{m,p}$.
By \eqref{eq:G6}, $G$ has edges $f_R^{p-1}(S_j)$, ordered anticlockwise by $\cP_{m,p}$.
For each $0 \le j \le p-1$, let $T_j = f_R^{p-1}(S_{p-j-1})$, so that $T_j \subset \Sigma_j$.
Since $j \mapsto p-j-1$ reverses the order of the indexing, the $T_j$ are ordered clockwise by $\cP_{m,p}$.
\end{proof}

\subsection{Parameter region boundaries}
\label{sub:polarCoords}

Consider the map \eqref{eq:f} with $\delta_R > 0$ and $\delta_R > \frac{\tau_R^2}{4}$,
and let $\frac{m}{p}$ be as in Theorem \ref{th:existence}.
As we alter the values of $\tau_R$ and $\delta_R$,
a non-empty interval $I_n$ may shrink to a point and vanish.
However, by Theorem \ref{th:existence} this is only possible for $n = p$,
because if we remove an interval $I_n$ with $n < p$,
there is no denominator list that gives the ordering of the remaining intervals.

Upon removing $I_p$,
the remaining intervals are ordered by $D[m^-,p^-;n^*]$ if $p^- > p^+$,
and ordered by $D[m^+,p^+;n^*]$ if $p^- < p^+$.
That is, the rotation number associated with $f$
changes from $\frac{m}{p}$ to its parent with the larger denominator,
i.e.~its {\em younger} parent \cite{LaTr95}. 

This behaviour is evident in Fig.~\ref{fig:numPiecesWithRotNums}.
From any parameter region corresponding to an irreducible fraction $\frac{m}{p}$,
as we cross the upper boundary of this region we enter the region corresponding to the younger parent.
We now derive a formula for this boundary.

The interval $I_p$ has endpoints $z_{p^-}$ and $z_{p^+}$,
thus vanishes when $z_{p^-} = z_{p^+}$.
By \eqref{eq:zn}, this equation is equivalent to
\begin{equation}
\alpha_{p^+ - 1} \beta_{p^- - 1} = \alpha_{p^- - 1} \beta_{p^+ - 1} \,.
\nonumber
\end{equation}
By then substituting \eqref{eq:alphabeta} we obtain
\begin{align}
&\delta_R^{\frac{p^+ - 1}{2}} \sin \left( p^+ \phi \right) \sin(\phi)
+ \delta_R^{\frac{p^+ + p^- - 1}{2}} \sin \left( p^+ \phi \right) \sin \left( \left( p^- - 1 \right) \phi \right) \nonumber \\
&= \delta_R^{\frac{p^- - 1}{2}} \sin \left( p^- \phi \right) \sin(\phi)
+ \delta_R^{\frac{p^+ + p^- - 1}{2}} \sin \left( p^- \phi \right) \sin \left( \left( p^+ - 1 \right) \phi \right), \nonumber
\end{align}
and by using standard trigonometric identities this reduces to
\begin{equation}
\delta_R^{\frac{p^+}{2}} \sin \left( p^+ \phi \right) - \delta_R^{\frac{p^-}{2}} \sin \left( p^- \phi \right) =
\delta_R^{\frac{p}{2}} \sin \left( \left( p^+ - p^- \right) \phi \right).
\label{eq:formula}
\end{equation}

\begin{figure}[b!]
\begin{center}
\includegraphics[width=15cm]{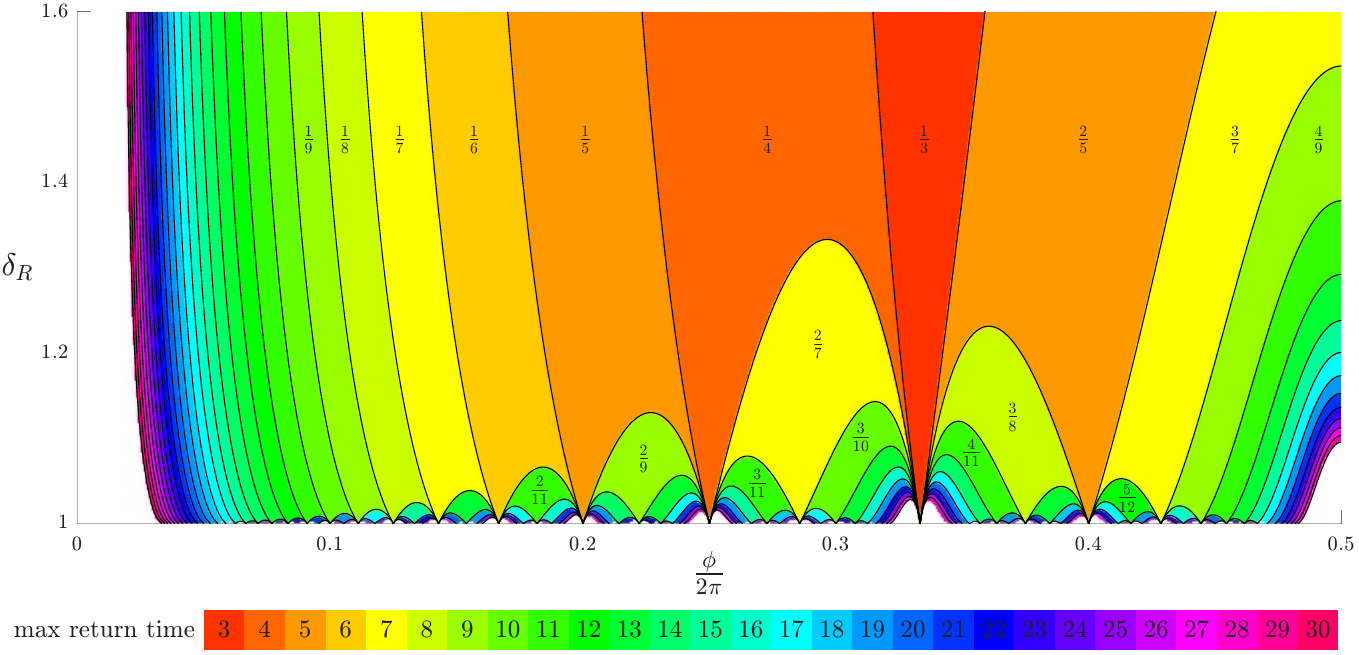}
\caption{
Regions of constant rotation number $\frac{m}{p}$, as defined by Theorem \ref{th:existence},
where $\phi$ is defined by \eqref{eq:phi}.
The value $p$ is the maximum time for orbits of \eqref{eq:f} to return to $\Sigma_1$.
As with Fig.~\ref{fig:numPieces}, these regions were computed numerically
using the algorithm in Theorem \ref{th:construct}.
We also show (in black) all curves \eqref{eq:formula} with $p \le 30$.
\label{fig:numPiecesPolarCoords}
} 
\end{center}
\end{figure}

Fig.~\ref{fig:numPiecesPolarCoords} reproduces Fig.~\ref{fig:numPiecesWithRotNums}
in $\left( \frac{\phi}{2 \pi},\delta_R \right)$ coordinates, and overlays curves
defined by \eqref{eq:formula}.
As expected, the curves match the boundaries of the regions
obtained numerically via Theorem \ref{th:construct}.

Every irreducible $\frac{m}{p} \in \left( 0, \frac{1}{2} \right)$
is the younger parent of exactly two numbers,
$\frac{m^- + m}{p^- + p}$ and $\frac{m + m^+}{p + p^+}$.
Thus each $\frac{m}{p}$-region has two lower boundaries
where it borders the $\frac{m^- + m}{p^- + p}$ and $\frac{m + m^+}{p + p^+}$-regions.
We observe in Fig.~\ref{fig:numPiecesPolarCoords}
that for every $\frac{m}{p} \in \left( 0, \frac{1}{2} \right)$
these two lower boundaries meet at $\left( \frac{\phi}{2 \pi},\delta_R \right) = \left( \frac{m}{p}, 1 \right)$.
Here the rotation number $\frac{m}{p}$ associated with $f$
is the same as the rotation number $\rho_{{\rm fp}} = \frac{\phi}{2 \pi}$ associated with the fixed point $X$.

\subsection{Bifurcations}
\label{sub:bifs}

Fig.~\ref{fig:bifSetWithBoundaries} shows again the two-parameter bifurcation diagram Fig.~\ref{fig:bifSet},
but now overlays (in black) the boundaries of the $\frac{m}{p}$-regions.
These boundaries were produced by
numerically converting the curves of Fig.~\ref{fig:numPiecesPolarCoords} to $(\tau_R,\delta_R)$-coordinates.
We observe that the bifurcation structure is related to the region boundaries.
Many of the shrinking points (or pinch points)
of the periodicity regions lie on region boundaries \cite{SzOs09}.
Some of the region boundaries
are bifurcation curves where the attractor changes from chaotic to non-chaotic
(periodic or quasiperiodic),
such as the vertical line $\tau_R = -1$
that bounds the $\frac{1}{3}$ and $\frac{1}{4}$ regions.

\begin{figure}[b!]
\begin{center}
\includegraphics[width=15cm]{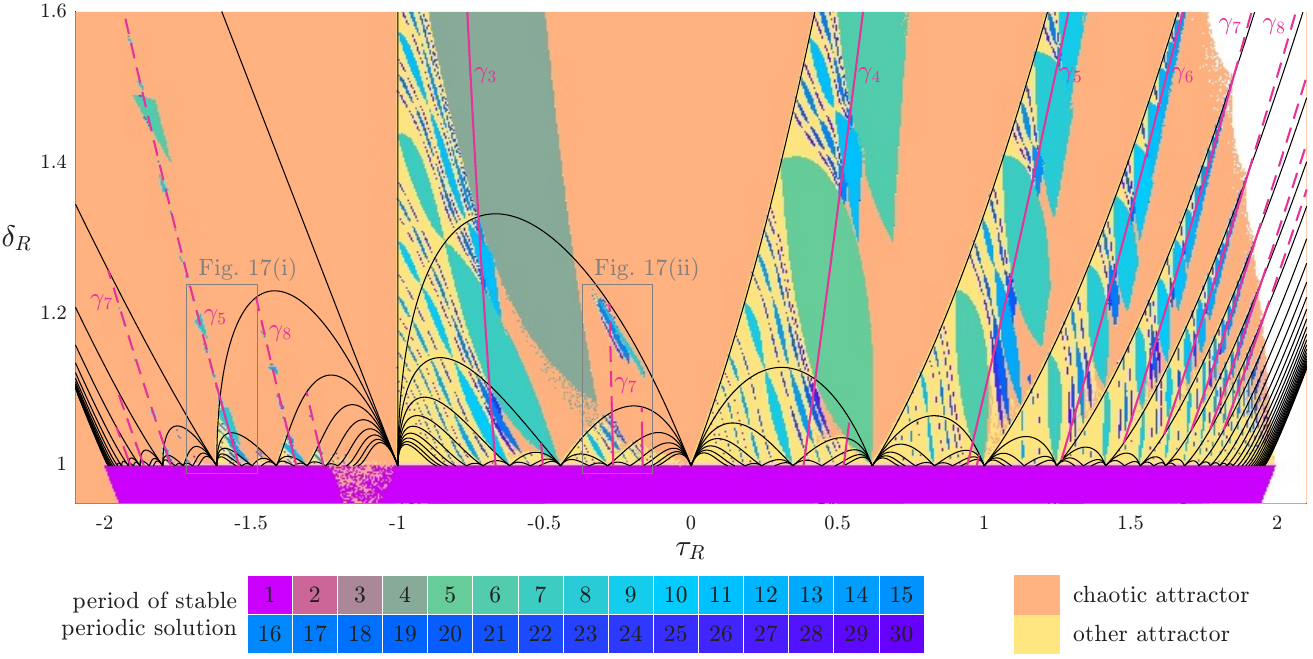}
\caption{
A repeat of Fig.~\ref{fig:bifSet}
but now showing the boundaries (black)
of the regions where $g$ has fixed maximum return time,
and curves $\gamma_n$ (pink) where the slope \eqref{eq:dgdx} vanishes
for the indicated value of $n$.
\label{fig:bifSetWithBoundaries}
} 
\end{center}
\end{figure}

In Fig.~\ref{fig:bifSetWithBoundaries} we have also overlaid (in pink)
some curves where one piece of the first return map $g$ has zero slope.
By Lemma \ref{le:rEqualsNminus1},
each piece of $g$ has the form
$g(x) = f_L \left( f_R^{n-1}(x,0) \right)_1$,
where $n \ge 1$ is the return time.
By \eqref{eq:fRr1}, the first component of $f_R^{n-1}(x,0)$ is $\alpha_{n-1} x + \beta_{n-1}$,
while, from the formula \eqref{eq:fLfR} for $f_R$, the second component of
$f_R^{n-1}(x,0)$ is $\delta_R \left( \alpha_{n-2} x + \beta_{n-2} \right)$, assuming $n \ge 2$ (if $n=1$ then this component is zero).
Thus, by the formula \eqref{eq:fLfR} for $f_L$,
\begin{equation}
g(x) = \tau_L \left( \alpha_{n-1} x + \beta_{n-1} \right) - \delta_R \left( \alpha_{n-2} x + \beta_{n-2} \right) + 1,
\label{eq:g3}
\end{equation}
which has slope
\begin{equation}
g'(x) = \tau_L \alpha_{n-1} - \delta_R \alpha_{n-2},
\label{eq:dgdx}
\end{equation}
assuming $n \ge 2$ (if $n=1$ then $g'(x) = \tau_L$).

Fig.~\ref{fig:bifSetWithBoundaries} shows curves where \eqref{eq:dgdx} vanishes for
viable return times $n \le 11$, and are labelled $\gamma_n$.
Each $\gamma_n$ is drawn dashed in areas where $n$ is the maximum return time, and solid otherwise.
We observe that near some dashed curves
there are collections of periodicity regions that
do not display the usual sausage-string geometry.
These are shown more clearly in the magnified plots of Fig.~\ref{fig:bifSetWithBoundariesZoom}.

\begin{figure}[h!]
\begin{center}
\includegraphics[width=15cm]{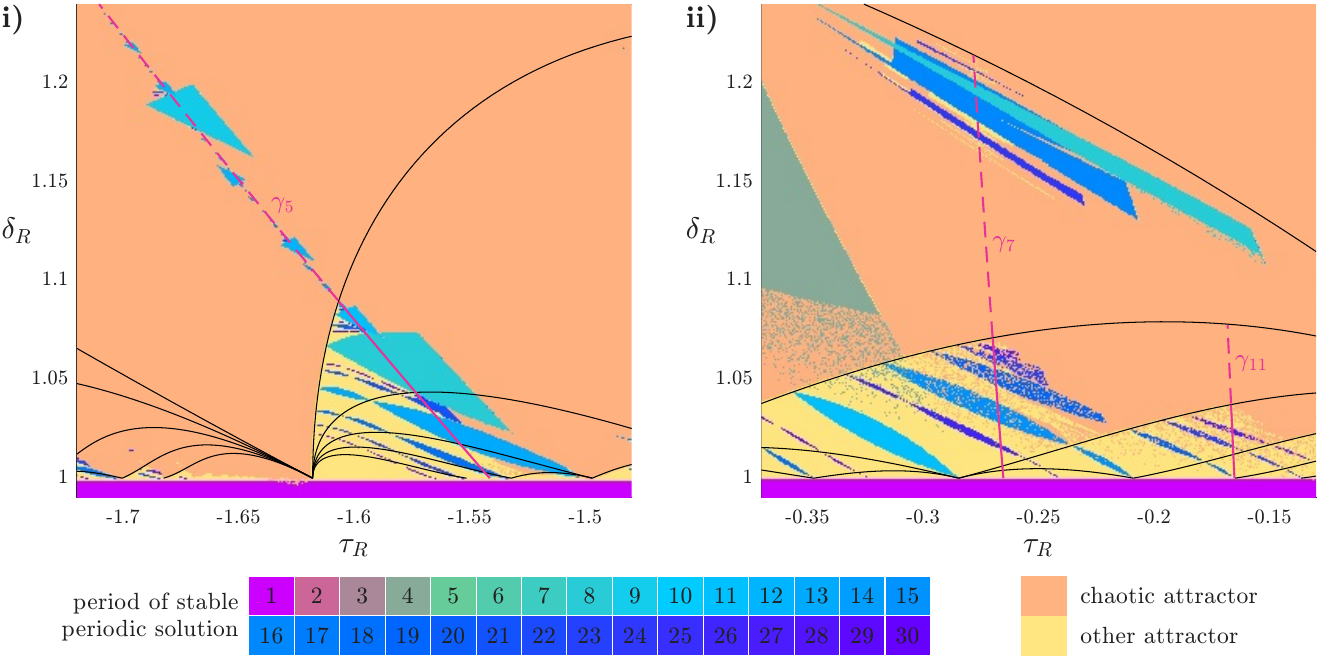}
\caption{
Magnifications of Fig.~\ref{fig:bifSetWithBoundaries}.
\label{fig:bifSetWithBoundariesZoom}
} 
\end{center}
\end{figure}

In the top left area of Fig.~\ref{fig:bifSetWithBoundariesZoom}(i),
the attractor is primarily chaotic.
The exception is around the dashed part of $\gamma_5$
where there is a sequence of roughly triangle-shaped periodicity regions.
We argue that this occurs because the Lyapunov exponent of an orbit of $g$
is the average value of $\ln|g'|$,
so the presence of a piece of $g$ with zero or near-zero slope
inhibits the occurrence of a positive Lyapunov exponent, and thus a chaotic attractor.

In Fig.~\ref{fig:bifSetWithBoundariesZoom}(ii),
the dashed part of $\gamma_7$ is also associated with a novel
collection of periodicity regions.
However, the attractor appears to be robustly chaotic
in the middle part of the plot despite the presence of the curve $\gamma_7$.
Numerical explorations suggest that this occurs because here the attractor
does not visit the piece of $g$ with zero slope.

\section{Discussion}
\label{sec:conc}

Chaotic piecewise-linear maps are multifarious. 
They model neural dynamics \cite{BrSt91,IbCa11},
financial markets \cite{TrWe16,GaRa22b},
and power converters \cite{BaRa00,RoRo02,AvZh20}.
They are employed in encryption algorithms \cite{BeAk07,LiWa11},
characterise corner collisions of piecewise-smooth ODEs \cite{DiBu01c,Co08b},
and used to gain insights into theoretical aspects of chaos \cite{CoLe84,Gl01,SaTa21}.

This paper concerns a prototypical family of piecewise-linear maps.
This family exhibits chaos robustly, but its bifurcation structures remain to be fully understood.
We believe that a deeper understanding of these structures
is attainable via the first return map $g$.
Theorem \ref{th:existence} characterises the configuration of $g$ via a rotation number,
while the algorithm in Theorem \ref{th:construct}
produces the viable return times $n$ and intervals $I_n$ on which $g$ is affine
for a given member of the family.

As parameters are varied, there are three ways in which the configuration of $g$
can change in a codimension-one fashion.
If the rotation number associated with $g$ is $\frac{m}{p}$,
then $I_p$ may vanish, or a new interval $I_{p^- + p}$ or $I_{p + p^+}$ may appear.
This was described in \S\ref{sub:bifs} and is a consequence of Theorem \ref{th:existence}.

As seen in Fig.~\ref{fig:numPiecesPolarCoords},
many regions of fixed rotation number limit to the line $\delta_R = 1$ at exactly three points.
We conjecture that if $\frac{m}{p} \in \left( 0, \frac{1}{2} \right)$ does not
have $\frac{0}{1}$ or $\frac{1}{2}$ as a parent,
then the $\frac{m}{p}$-region limits to $\delta_R = 1$ where the value of $\frac{\phi}{2 \pi}$
is $\frac{m^-}{p^-}$, $\frac{m}{p}$, and $\frac{m^+}{p^+}$.
It remains to prove this and use the formula
\eqref{eq:formula} to determine the asymptotic behaviour of the regions
as they emanate from $\delta_R = 1$. 

Section \ref{sub:bifs} briefly showed that changes to the configuration of $g$
often translate to changes in the nature of the attractor.
For instance, several $\frac{m}{p}$-region boundaries in Fig.~\ref{fig:bifSetWithBoundaries}
appear to be boundaries of robust chaos.
Curves where one piece of $g$ has zero slope are significant,
for example at $\gamma_4$ the sausage-string structure appears cease. 
We speculate that this occurs because the corresponding
piece of $g$ changes from increasing to decreasing (compare \cite[Figure 8]{Si18e}).
It remains to develop a theory for the bifurcation structures
shown in Fig.~\ref{fig:bifSetWithBoundariesZoom}.
We expect that classical results on
one-dimensional expanding maps \cite{LaYo73,LiYo78}
could be used to formally identity parameter regions of robust chaos,
and establish ergodic properties of this chaos.

\section*{Acknowledgements}

This work was supported by Marsden Fund contract MAU2504 managed by Royal Society Te Ap\={a}rangi.
The author thanks Paul Glendinning for helpful conversations.

\appendix

\section{The first few regions $F_r$}
\label{app:nStarProof}

Here we prove Proposition \ref{pr:nStar}
by characterising the first few regions $F_r$, defined by \eqref{eq:Fr},
and relating them to the $x$-axis, $\Sigma_1$.

Recall, $s_r$ is the slope of $\Sigma_{-r} = f_R^{-r} \left( \Sigma_0 \right)$.
If $\rD f_R$ has complex eigenvalues (i.e.~$\delta_R > \frac{\tau^2}{4}$, so $X$ is a focus),
then we cannot have $s_r \le 0$ for all $r \ge 1$.
As in \cite{GlSi25}, let $q^*$ be the smallest $r \ge 1$
for which $s_r > 0$ or $s_r = \infty$.

Notice $s_1 = -\tau_R$, so $q^* = 1$ if and only if $\tau_R < 0$.
Our example points {\bf a} and {\bf b} have $q^* = 1$,
while point {\bf c} has $q^* = 2$, see Fig.~\ref{fig:divK}.

\begin{lemma}
If $\delta_R > \frac{\tau^2}{4}$,
then $s_1 < s_2 < \cdots < s_{q^*-1}$.
\label{le:srInc}
\end{lemma}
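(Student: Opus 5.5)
The plan is to work directly with the slope recurrence \eqref{eq:slopeRecurrenceRelation}. Set $h(s) = -\frac{\delta_R}{s} - \tau_R$, so that $s_r = h(s_{r-1})$ whenever $s_{r-1} \notin \{0,\infty\}$.

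First I establish the shape of the sequence before $q^*$. By the definition of $q^*$, for $1 \le r \le q^*-1$ we have $s_r \le 0$ and $s_r \ne \infty$. Moreover $s_r \ne 0$ for $r \le q^*-2$, because $s_r = 0$ would force $s_{r+1} = \infty$, contradicting the minimality of $q^*$. Hence $s_1,\ldots,s_{q^*-2} < 0$, and for $2 \le r \le q^*-1$ the value $s_r = h(s_{r-1})$ is given by the generic branch of the recurrence. On the negative half-line $h$ is continuous and strictly increasing, since $h'(s) = \delta_R/s^2 > 0$. This means the sequence is monotone there: if $s_1 < s_2$ holds (with both in the domain), then $s_{r-1} < s_r$ implies $s_r = h(s_{r-1}) < h(s_r) = s_{r+1}$. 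Only $s_{q^*-1} \le 0$ is needed to close the chain, and since $h$ is applied only to $s_1,\ldots,s_{q^*-2}$, all of which are negative, this suffices. So the whole claim reduces to the first step $s_1 < s_2$ by induction on $r$.

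For the base case there is nothing to prove unless $q^* \ge 3$, in which case $s_1 = -\tau_R < 0$, so $\tau_R > 0$. Then $s_2 = \frac{\delta_R}{\tau_R} - \tau_R$, and
\[
s_2 - s_1 = \frac{\delta_R}{\tau_R} > 0 .
\]
This case needs neither $\delta_R > \frac{\tau_R^2}{4}$ nor even the sign of $s_2$, so it poses no difficulty.

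An alternative approach is more intrinsic: the focus condition means $h$ has no real fixed point, because $s^2 + \tau_R s + \delta_R = 0$ has negative discriminant. Hence $h(s) - s = -\frac{s^2 + \tau_R s + \delta_R}{s} > 0$ for every $s < 0$. This gives $s_{r+1} > s_r$ at each step directly, as long as $s_r < 0$, which holds for $r \le q^*-2$. This version uses the hypothesis $\delta_R > \frac{\tau_R^2}{4}$ naturally, and I would present it as the main argument, with the monotonicity argument above as a cross-check.

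The only delicate point is bookkeeping at the endpoints: $s_{q^*-1}$ may equal $0$, and $h$ must never be applied to $0$ or $\infty$ within the range considered. Both are handled by the observation that $s_r < 0$ strictly for $r \le q^*-2$.
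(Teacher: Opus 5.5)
Your proposal is correct, and the argument you present as the main one ($h(s)-s=-\frac{s^2+\tau_R s+\delta_R}{s}>0$ for all $s<0$, because the quadratic has negative discriminant) is exactly the paper's one-line proof (``if $s_r<0$ then $s_{r+1}>s_r$''); your observation that $s_r<0$ strictly for $r\le q^*-2$ supplies the endpoint bookkeeping the paper leaves implicit. Your monotonicity cross-check is also valid, but it is not hypothesis-free: it uses $\delta_R>0$ both for $h'>0$ and for $s_2-s_1=\delta_R/\tau_R>0$, which here follows from $\delta_R>\frac{\tau_R^2}{4}\ge 0$.
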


\begin{proof}
If $s_r < 0$, then $s_{r+1} > s_r$ by \eqref{eq:slopeRecurrenceRelation}.
Thus $\{ s_r \}_{r \ge 1}$ is increasing while taking values in $(-\infty,0]$.
\end{proof}

For all $r \ge 0$, the line $\Sigma_{-r}$ contains the points $Q_1^r$ and $Q_1^{r+1}$,
see \eqref{eq:intersectionFormula}.
Let $M_r$ be the ray obtained by cutting $\Sigma_{-r}$ at $Q_1^r$
and retaining the part that contains $Q_1^{r+1}$.
Let $M_r^{\rm seg}$ be the line segment from $Q_1^r$ to $Q_1^{r+1}$,
and let $M_r^{\rm rest} = M_r \setminus M_r^{\rm seg}$ be the rest of $M_r$, see Fig.~\ref{fig:divM}.
Also let $\tilde{\Sigma}_0$ be the part of $\Sigma_0$ with $y > -1$.

\begin{figure}[b!]
\begin{center}
\includegraphics[width=7.5cm]{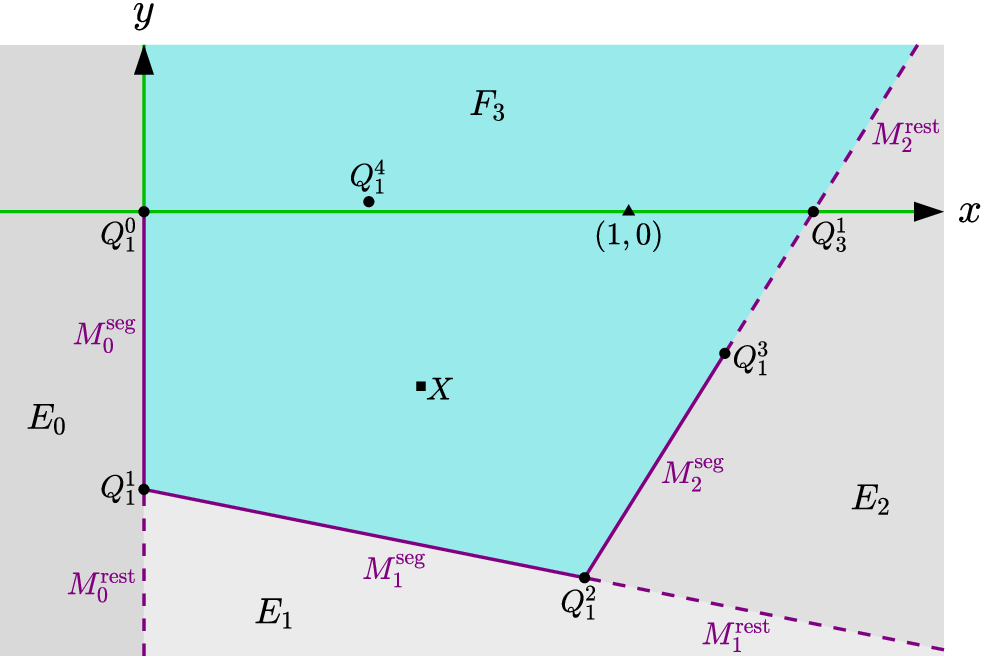}
\caption{
The phase space of \eqref{eq:f}
showing $F_3$ and other sets defined
in the text for parameter point {\bf c} of Fig.~\ref{fig:bifSet}. 
\label{fig:divM}
} 
\end{center}
\end{figure}

The following lemma characterises $F_2, F_3, \ldots, F_{q^*+1}$.
Note that $F_0 = \mathbb{R}^2$, and $F_1 = \Omega_R$.

\begin{lemma}
Suppose $\delta_R > 0$ and $\delta_R > \frac{\tau_R^2}{4}$.
For each $r = 1,2,\ldots,q^*+1$,
\begin{enumerate}[label=\roman*),ref=\roman*,itemsep=0mm]
\item 
$Q_1^r \in F_{r-1}$,
\item
if $r < q^*+1$, then $Q_1^r$ lies below $\Sigma_1$,
\item
if $r > 1$, then $F_r$ is the open set bounded in anticlockwise order by
$\tilde{\Sigma}_0$, $M_1^{\rm seg}, M_2^{\rm seg}, \ldots, M_{r-2}^{\rm seg}$, and $M_{r-1}$.
\end{enumerate}
Moreover, $E_{q^*} \cap \Sigma_1 = \left[ z_{q^*+1}, \infty \right) \times \{ 0 \}$.
\label{le:firstFewFr}
\end{lemma}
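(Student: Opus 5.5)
I will prove (i)--(iii) together by induction on $r$, using the cutting description \eqref{eq:FrU}: $F_{r+1}=F_r\cap U_R^r$, so $F_{r+1}$ is obtained from $F_r$ by cutting along $\Sigma_{-r}$ and keeping the side containing $X$ (by \eqref{eq:UrRUrL}). The concrete data I will use are the formula $Q_1^1=(0,0)$ from \eqref{eq:Qn1} with $z_1=0$, the fact that $\Sigma_{-r}$ is the line through $Q_1^r$ and $Q_1^{r+1}$ (Lemma \ref{le:intersectionFormula} with $n=1$, valid since $\alpha_0=1\ne0$), and the monotonicity of slopes from Lemma \ref{le:srInc}.

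\textbf{Base case $r=1,2$.} For $r=1$ we have $Q_1^1=(0,0)\in F_0=\mathbb{R}^2$, and it lies on $\Sigma_1$. If $q^*>1$ I need it strictly below; a cleaner approach is to read (ii) for $r=1$ via $Q_1^1$ being the top endpoint of $\tilde\Sigma_0$, and I will check the intended convention. Next, $Q_1^2=f_R^{-1}(0,0)=(0,-1)+{}$shift. Direct computation from \eqref{eq:fLfR} gives $f_R^{-1}(x,y)=(-y/\delta_R,\,x-1+\tau_R y/\delta_R)$, so $Q_1^2=(0,-1)$. This is on $\Sigma_0$ but not in $\Omega_R$, so the inclusion $Q_1^2\in F_1$ has to be read with closure, and I expect the paper's indexing to work out after a careful check. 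For $F_2=\Omega_R\cap U_R^1$, note that $\Sigma_{-1}$ is the line $y=-\tau_R x-1$ through $(0,-1)$ with slope $s_1=-\tau_R$. The side containing $X$ is above it, so $F_2$ is bounded by $\tilde\Sigma_0$ ($y>-1$) and the ray $M_1$ of $\Sigma_{-1}$ starting at $Q_1^1$ (interpreted via $Q_1^2$), giving (iii).

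\textbf{Inductive step.} Assume $F_r$ is bounded anticlockwise by $\tilde\Sigma_0,M_1^{\rm seg},\dots,M_{r-2}^{\rm seg},M_{r-1}$ with $r\le q^*$. The ray $M_{r-1}$ lies on $\Sigma_{-(r-1)}$, has slope $s_{r-1}\le0$ (because $r-1<q^*$), and emanates from $Q_1^{r-1}$ through $Q_1^{r}$. The next line $\Sigma_{-r}$ passes through $Q_1^r\in M_{r-1}$ and has slope $s_r$. I split into two cases.
\begin{enumerate}[label=\alph*),itemsep=0mm]
\item If $r<q^*$, then $s_{r-1}<s_r\le0$.
\item If $r=q^*$, then $s_r>0$ or $s_r=\infty$.
\end{enumerate}
In either case the rotation sense of $f_R^{-1}$ (anticlockwise about $X$) shows that $\Sigma_{-r}$ crosses into $F_r$ at $Q_1^r$ and turns anticlockwise relative to $M_{r-1}$. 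Since the earlier boundary pieces lie on lines of strictly smaller slope in the convex region, $\Sigma_{-r}$ meets $\partial F_r$ only at $Q_1^r$ and possibly at $\tilde\Sigma_0$. Keeping the $X$ side then replaces $M_{r-1}$ by $M_{r-1}^{\rm seg}\cup M_r$, which gives (iii) for $r+1$ and (i), $Q_1^{r+1}\in F_r$. For (ii), I track the $y$-coordinates: $Q_1^{r+1}$ is reached by moving along $M_r$ from $Q_1^r$ with nonpositive slope in the direction of increasing $x$, so it stays below $\Sigma_1$ as long as $r+1<q^*+1$. The relevant direction along $M_r$ is obtained by applying $f_R^{-1}$ to the direction along $M_{r-1}$.

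\textbf{Final claim and main obstacle.} For the last claim, $E_{q^*}=F_{q^*}\setminus F_{q^*+1}$, which is the part of $F_{q^*}$ cut off beyond $\Sigma_{-q^*}$. Using (iii) at $r=q^*$ and $r=q^*+1$, and the fact that $M_{q^*-1}$ is a ray heading to the right and downward while $\Sigma_{-q^*}$ has positive or infinite slope, the intersection with $\Sigma_1$ is $\{x\ge z_{q^*+1}\}$ by Lemma \ref{le:znAsIntersection}. The main obstacle is the inductive geometric step: proving rigorously that $\Sigma_{-r}$ cuts off exactly the unbounded tail of $M_{r-1}$ and does not clip earlier segments or $\tilde\Sigma_0$. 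I would first try to settle this using slope ordering plus convexity (Lemma \ref{le:convexity}), with orientation fixed by the sign of $\det \rD f_R^{-1}>0$.
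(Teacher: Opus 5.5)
Your skeleton matches the paper's: induction on $r$, cutting $F_r$ along $\Sigma_{-r}$, slope monotonicity from Lemma~\ref{le:srInc}, and the wedge shape of $E_{q^*}$ for the last claim. However, the base case contains a concrete error, and the decisive step of the induction is left unproved.

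\textbf{The base case.} You misread \eqref{eq:Qn1}. With $z_1=0$ it gives $Q_1^1=(0,z_1-1)=(0,-1)$; the origin is $Q_1^0$, not $Q_1^1$. Consequently $Q_1^2=f_R^{-1}(0,-1)=(\frac{1}{\delta_R},\,-1-\frac{\tau_R}{\delta_R})$, whose first component is positive. So (ii) holds strictly at $r=1$, and (i) holds at $r=2$ with $Q_1^2\in\Omega_R=F_1$ literally. Also $M_1$ is the ray of $\Sigma_{-1}$ from $(0,-1)$ through $Q_1^2$. No ``convention'' or closure reading is needed; your doubts about the statement come entirely from this off-by-one.

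\textbf{The inductive step.} You try to obtain (i) from the shape of the cut, and the shape of the cut from an unspecified ``rotation sense'' of $f_R^{-1}$. The paper goes the other way, and this is the idea you are missing. By (i) and (ii) at $r$, $Q_1^r\in F_{r-1}$ with $(Q_1^r)_2<0$, so \eqref{eq:Fr4} immediately gives $Q_1^{r+1}=f_R^{-1}(Q_1^r)\in F_r$. Since $Q_1^{r+1}$ also lies on $\Sigma_{-r}$, this identifies $M_r$ as the half of $\Sigma_{-r}$ beyond $Q_1^r$ that enters $F_r$.

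Two further loose ends must be closed rather than left open.

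First, the cut meets $\partial F_r$ only at $Q_1^r$. For $r\ge 2$, (iii) at $r$ puts $\overline{F_r}$ on or above every $\Sigma_{-j}$ with $1\le j\le r-1$, and $s_r>s_{r-1}\ge s_j$. Hence one half of $\Sigma_{-r}\setminus\{Q_1^r\}$ lies strictly below $\Sigma_{-(r-1)}$, so outside $\overline{F_r}$. The other half lies strictly above every $\Sigma_{-j}$ and in $\Omega_R$. This excludes your ``possibly at $\tilde\Sigma_0$'', which would contradict (iii) at $r+1$.

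Second, ``keep the side containing $X$'' is unusable until $X$ is located. Instead use that $f_R^r$ maps $\Sigma_{-(r-1)}$ onto $\Sigma_1$ with $Q_1^{r-1}\mapsto(1,0)$ and $Q_1^r\mapsto(0,0)$. So $M_{r-1}^{\rm rest}$ is sent into $\Omega_L$, and the piece of $F_r$ between $M_{r-1}^{\rm rest}$ and $M_r$ is $E_r$.

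Your idea of transporting directions could replace \eqref{eq:Fr4}, since $\rD f_R^{-1}(1,s)=\delta_R^{-1}(-s,\,\delta_R+\tau_R s)$ points right for $s<0$ and straight up for $s=0$. But it has to be carried out, not just mentioned.

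\textbf{The final claim.} State explicitly that the apex $Q_1^{q^*}$ of the wedge $E_{q^*}$ lies below $\Sigma_1$ by (ii). Also treat $q^*=1$ separately: there $M_0^{\rm rest}$ is the vertical half-line below $(0,-1)$, not a ray heading to the right.
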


\begin{proof}
We prove the result by induction on $r$.
With $r = 1$, part (i) of Lemma \ref{le:firstFewFr} holds because $F_0 = \mathbb{R}^2$,
part (ii) holds because $Q_1^1 = (0,-1)$,
and (iii) holds vacuously.

Suppose the result is true for some $r = 1,2,\ldots,q^*$.
It remains to verify parts (i), (ii), and (iii) for $r+1$.

By the induction hypothesis, $Q_1^r$ belongs to $F_{r-1}$ and lies below $\Sigma_1$.
Thus, by \eqref{eq:Fr4}, $f_R^{-1} \left( Q_1^r \right) = Q_1^{r+1} \in F_r$, verifying (i) for $r+1$.

The region $F_{r+1}$ can be formed by
starting at the point $Q_1^r$, which belongs to $M_{r-1}$ on the boundary of $F_r$,
and cutting $F_r$ along $\Sigma_{-r}$.
The cut extends from $Q_1^r$ to infinity
with $x$-values tending to $\infty$, or, in the special case $s_r = \infty$,
with $x > 0$ constant and $y$-values tending to infinity.
In the case $r = 1$ this observation is elementary,
while in the case $r > 1$ it follows from part (iii) of the induction hypothesis
and the bound $s_r > s_{r-1}$ of Lemma \ref{le:srInc}.
Since $Q_1^{r+1}$ belongs to both $F_r$ and the $M_r$, the cut is along $M_r$.

Thus cut divides $F_r$ into a region bounded by $M_{r-1}^{\rm rest}$ and $M_r$, and a region bounded by
$\tilde{\Sigma}_0, M_1^{\rm seg}, M_2^{\rm seg}, \ldots, M_{r-1}^{\rm seg}$, and $M_r$.
Since $f_R^{r+1}$ maps $M_{r-1}^{\rm rest}$ into $\Omega_L$,
the first region is $E_r$ and the second region is $F_{r+1}$,
verifying (iii) for $r+1$.

If $r < q^*+1$, then the slope $s_r$ of $M_r$ is finite and non-positive.
Thus $Q_1^{r+1} \in M_r$ lies level with or below $Q_1^r$,
so lies below $\Sigma_1$, verifying (ii) for $r+1$.

In the case $r = q^*$,
the above arguments show that $E_{q^*}$ is the convex region
bounded by $M_{q^*-1}^{\rm rest}$ and $M_{q^*}$.
These rays emanate from $Q_1^{q^*}$ below $\Sigma_1$,
with $M_{q^*-1}^{\rm rest}$ horizontal or with negative slope,
and $M_{q^*}$ vertical or with positive slope.
Thus $M_{q^*}$ intersects $\Sigma_1$, necessarily at $Q_{q^*+1}^0 = \left( z_{q^*+1}, 0 \right)$,
and $E_{q^*} \cap \Sigma_1 = \left[ z_{q^*+1}, \infty \right) \times \{ 0 \}$.
\end{proof}

The last statement in Lemma \ref{le:firstFewFr}
shows that $I_{q^*+1} = [z_{q^*+1},0]$ if $z_{q^*+1} \le 1$,
and $I_{q^*+1} = \varnothing$ if $z_{q^*+1} > 1$.

\begin{lemma}
Suppose $\delta_R > 0$ and $\delta_R > \frac{\tau_R^2}{4}$.
Then $I_n = \varnothing$ for all $2 \le n \le q^*$ and $n^* > q^*$.
\label{le:nStarGreaterThanqStar}
\end{lemma}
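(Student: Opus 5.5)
We read off both claims from Lemma \ref{le:firstFewFr}, using \eqref{eq:In2} and \eqref{eq:chiToN}. The idea is that for $1 \le r \le q^*-1$ the set $E_r$ lies strictly below $\Sigma_1$, while $E_{q^*}$ meets $\Sigma_1$ only in a set that does not contain $(1,0)$.

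\textbf{Step 1: $I_n = \varnothing$ for $2 \le n \le q^*$.} Fix $1 \le r \le q^*-1$. The proof of Lemma \ref{le:firstFewFr} shows that $E_r$ is the convex region bounded by the rays $M_{r-1}^{\rm rest}$ and $M_r$. For $r=1$ we read $M_0^{\rm rest}$ as the part of $\Sigma_0$ with $y<-1$, which gives $E_1 = \{x>0,\, y<-1\}$. Both rays emanate from $Q_1^r$, which lies below $\Sigma_1$ by Lemma \ref{le:firstFewFr}(ii).
\begin{itemize}
\item The ray $M_r$ has finite non-positive slope $s_r$, since $r<q^*$, and it points in the direction of increasing $x$.
\item The ray $M_{r-1}^{\rm rest}$ has slope $s_{r-1} \le 0$ (or is vertical and pointing downward when $r=1$), and it also heads towards increasing $x$ (or downward).
\end{itemize}
So both boundary rays have non-increasing $y$ along them, and their region lies below $\Sigma_1$. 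Hence $E_r \cap \Sigma_1 = \varnothing$, and by \eqref{eq:In2} we get $I_{r+1} = \varnothing$.

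The delicate point is checking that the wedge is the region between the rays on the side that stays below $\Sigma_1$, and not the complementary reflex region. Convexity of $E_r$ (Lemma \ref{le:convexity}) settles this: a convex region bounded by two rays from $Q_1^r$, each with non-increasing $y$, has supremum of $y$ equal to the $y$-value of $Q_1^r$, which is negative.

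\textbf{Step 2: $n^* > q^*$.} By Step 1 and $I_1 = (-\infty,0]$, we have $(1,0) \notin E_{r}$ for all $r \le q^*-1$. Therefore $n^* - 1 = \chi(1,0) \ge q^*$, i.e.\ $n^* \ge q^*+1$, so $n^* > q^*$.

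If one needs the finer information of whether $n^* = q^*+1$, the final statement of Lemma \ref{le:firstFewFr} gives it: $(1,0) \in E_{q^*}$ if and only if $z_{q^*+1} \le 1$. This refinement is not needed for the stated inequality.

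The only real obstacle is the orientation bookkeeping in Step 1, meaning that the rays $M_r$ point towards increasing $x$. This follows from the cutting description in the proof of Lemma \ref{le:firstFewFr}, where each cut extends with $x \to \infty$.
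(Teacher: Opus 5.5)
Your proof is correct and is essentially the paper's argument seen from the complementary side. The paper reads off from Lemma~\ref{le:firstFewFr}(iii) that $F_{q^*}$ contains the positive $x$-axis, since its boundary lies below $\Sigma_1$ apart from $\tilde{\Sigma}_0$; you instead check directly that each wedge $E_r$, $1 \le r \le q^*-1$, lies below $\Sigma_1$, using the same facts ($Q_1^r$ below $\Sigma_1$, $s_r \le 0$ finite for $r<q^*$, rays heading to $x \to \infty$). One small slip: $E_1 = \{x>0,\ y \le -\tau_R x - 1\}$, not $\{x>0,\ y<-1\}$ (these agree only when $\tau_R=0$), but you only use the directions of its two boundary rays, so nothing breaks.
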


\begin{proof}
By Lemma \ref{le:firstFewFr}(iii),
all parts of the boundary of $F_{q^*}$ lie below $\Sigma_1$ except $\tilde{\Sigma}_0$.
Thus $F_{q^*}$ contains the positive $x$-axis.
Thus, by \eqref{eq:Fr}, for any $2 \le n \le q^*$
the region $E_{n-1}$ has no points on the positive $x$-axis,
and so $I_n = \varnothing$ by \eqref{eq:In2}.
This implies $\chi(1,0) > q^* + 1$,
hence $n^* > q^*$ by \eqref{eq:nStar} and \eqref{eq:chiToN}.
\end{proof}

By Lemma \ref{le:nStarGreaterThanqStar},
either $n^* = q^* + 1$, such as for example points {\bf a} and {\bf b},
or $n^* > q^* + 1$, such as for example point {\bf c}.
The following result handles the latter case.
The case $n^* = q^* + 1$ is easier and handled within the proof of
Propositon \ref{pr:nStar} (given below).

\begin{lemma}
Suppose $\delta_R > 0$, $\delta_R > \frac{\tau_R^2}{4}$, and $n^* > q^*+1$.
For all $q^* + 1 \le r \le n^*$,
\begin{enumerate}[label=\roman*),ref=\roman*,itemsep=0mm]
\item 
$Q_1^r \in F_{r-1}$,
\item
$Q_1^{r-1}$ lies below $\Sigma_1$,
\item
if $r > q^* + 1$ then $F_r$ is the polygon with vertices
$Q_1^1, Q_1^2, \ldots, Q_1^{r-1}, Q_{r-1}^1$ (ordered anticlockwise), and
\item
if $r > q^* + 1$ then $E_{r-1} \cap \Sigma_1 = \left[ z_r, z_{r-1} \right) \times \{ 0 \}$.
\end{enumerate}
\label{le:nextFewFr}
\end{lemma}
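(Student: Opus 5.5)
We argue by induction on $r$, starting at $r = q^*+1$ and running up to $r = n^*$, in the same style as the proof of Lemma \ref{le:firstFewFr}.

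\emph{Base case $r = q^*+1$.} Part (i) is Lemma \ref{le:firstFewFr}(i) with $r = q^*+1$. Part (ii) is Lemma \ref{le:firstFewFr}(ii) with $r = q^*$. Parts (iii) and (iv) are vacuous.

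\emph{Transition to $r = q^*+2$.} This requires $n^* > q^*+1$ and is the first genuinely new step. By Lemma \ref{le:firstFewFr}(iii), $F_{q^*+1}$ is bounded anticlockwise by $\tilde{\Sigma}_0$, the segments $M_1^{\rm seg},\ldots,M_{q^*-1}^{\rm seg}$, and the ray $M_{q^*}$. The ray $M_{q^*}$ is vertical or has positive slope. It meets $\Sigma_1$ at $(z_{q^*+1},0)$, and $z_{q^*+1} > 1$, since otherwise $1 \in I_{q^*+1}$ by the last statement of Lemma \ref{le:firstFewFr}, giving $n^* = q^*+1$.

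By \eqref{eq:Fr4}, $F_{q^*+2}$ is $f_R^{-1}$ of the part of $F_{q^*+1}$ below $\Sigma_1$. That part is the polygon cut off by $\Sigma_1$. Its vertices are $Q_1^1,\ldots,Q_1^{q^*+1}$, the point $(z_{q^*+1},0)$, and the point $(0,0)$ where $\tilde\Sigma_0$ meets $\Sigma_1$. Applying $f_R^{-1}$ sends $(0,0)\mapsto Q_1^1$, $(z_{q^*+1},0)\mapsto Q_{q^*+1}^1$ and $Q_1^j \mapsto Q_1^{j+1}$. This is not quite right, since $F_{q^*+2}$ must lie in $\Omega_R$ and be bounded by $\Sigma_0$. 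The cleaner route is to cut $F_{q^*+1}$ directly along $\Sigma_{-(q^*+1)}$ using \eqref{eq:FrU}. This line passes through $Q_1^{q^*+1}$ and $Q_1^{q^*+2}$, and it meets $\Sigma_0$ at $Q_{q^*+1}^1=(0,z_{q^*+1}-1)$, by \eqref{eq:intersectionFormula} with $j=1$. Because $z_{q^*+1}>1$, the point $Q_{q^*+1}^1$ lies on $\tilde\Sigma_0$ above $(0,0)$. So the cut runs from $Q_1^{q^*+1}\in M_{q^*}$ across to $\tilde\Sigma_0$, and the retained piece containing $X$ is the bounded polygon with vertices $Q_1^1,\ldots,Q_1^{q^*+1},Q_{q^*+1}^1$. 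This gives (iii).

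To decide which side of the cut is retained, we use \eqref{eq:UrRUrL}: the retained side is the one containing $X$. We also use Lemma \ref{le:convexity}.

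\emph{General step.} Suppose (i)--(iii) hold at some $r$ with $q^*+2\le r<n^*$, so $F_r$ has vertices $Q_1^1,\ldots,Q_1^{r-1},Q_{r-1}^1$.
\begin{itemize}
\item[(i)] Since $Q_1^{r-1}$ lies below $\Sigma_1$, \eqref{eq:Fr4} gives $Q_1^r\in F_{r-1}$. In fact $Q_1^r$ lies on the boundary edge of $F_r$ along $\Sigma_{-(r-1)}$, between $Q_1^{r-1}$ and $Q_{r-1}^1$.
\item[(ii)] To show $Q_1^{r}$ lies below $\Sigma_1$ for $r<n^*$, argue as follows. The point $(1,0)$ lies in $F_{r}$ for all $r < n^*$, because $\chi(1,0)=n^*-1$. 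If $Q_1^r$ were on or above $\Sigma_1$, then the segment of $\partial F_r$ containing it, together with convexity, would force $\Sigma_{-(r-1)}$ to cross $\Sigma_1$ to the left of $x=1$. That would mean $z_r\le1$ and $(1,0)\in E_{r-1}$, contradicting $r<n^*$.
\item[(iii)] $F_{r+1}$ is obtained by cutting $F_r$ along $\Sigma_{-r}$, which passes through $Q_1^r$ and $Q_1^{r+1}$ and meets $\Sigma_0$ at $Q_r^1$. The vertex $Q_{r-1}^1$ lies in $U_L^r$: apply $f_R^{-1}$ to the fact that $(z_{r-1},0)$ is on the $\Omega_L$ side of $\Sigma_{-(r-1)}$, which follows from $z_r<z_{r-1}$ and the slope sign. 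So the cut removes exactly the corner $Q_{r-1}^1$, and the new vertices are $Q_1^r$ and $Q_r^1$.
\item[(iv)] The removed corner is $E_r$. Its intersection with $\Sigma_1$ is the segment between the crossings of $\Sigma_{-r}$ and $\Sigma_{-(r-1)}$ with $\Sigma_1$, namely $[z_{r+1},z_r)\times\{0\}$. The closedness at $z_{r+1}$ follows from $E_r$ containing $\Sigma_{-r}\cap \partial$. The ordering $z_{r+1}<z_r$ follows because the region below the cut is anticlockwise from $X$, which gives the orientation.
\end{itemize}

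\emph{Main obstacle.} The hard part is the orientation bookkeeping: showing that the cut line $\Sigma_{-r}$ separates the single vertex $Q_{r-1}^1$ from all the others, and that $z_{r+1}<z_r\le$ the previous crossing. I would settle this with the rotation argument from Lemma \ref{le:caseI}: $f_R$ rotates points clockwise about $X$ in the sense used there, so the preimages $\Sigma_{-r}$ turn anticlockwise. Together with the fact that $(1,0)$ stays in $F_r$ for $r<n^*$, this orders the crossings $(z_r,0)$ monotonically.
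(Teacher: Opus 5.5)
\textbf{There is a genuine gap in part (ii).} This is the one step of the induction that needs dynamics, and your argument for it does not work.

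You claim that if $Q_1^r$ were on or above $\Sigma_1$, then convexity of $F_r$ would force $\Sigma_{-(r-1)}$ to meet $\Sigma_1$ at some $x\le 1$. This is a non sequitur. The line $\Sigma_{-(r-1)}$ meets $\Sigma_1$ at $(z_r,0)$ wherever $Q_1^r$ sits on that line. You already know $z_r>1$ from $(1,0)\in F_r$, and nothing in the convex geometry of $F_r$ links the height of $Q_1^r$ to $z_r$.

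In fact the correct implication runs the other way. Let $H$ be the open convex hull of $Q_1^1,\dots,Q_1^r,Q_r^1$. Here $Q_1^1,\dots,Q_1^{r-1}$ lie below $\Sigma_1$ by induction, and $Q_r^1=(0,z_r-1)$ lies above it because $z_r>1$. If $Q_1^r$ were on or above $\Sigma_1$, then $f_R(H)$, the open hull of $Q_1^0,\dots,Q_1^{r-1},Q_r^0$, would be exactly $H\cap\{y<0\}$, so $f_R(H)\subset H$. The paper gets its contradiction from this inclusion: $f_R$ is area-expanding since $\det \rD f_R=\delta_R>1$. Alternatively, $(1,0)\in H\subset\Omega_R$, so its orbit would be trapped in $\Omega_R$ forever, contradicting finiteness of $n^*$. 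Either way, the point $(1,0)$ would never escape, not escape early into $E_{r-1}$ as you assert. Your closing remark about rotation ordering the crossings $z_r$ does not help either: monotonicity of the $z_r$ says nothing about which side of $\Sigma_1$ the point $Q_1^r$ lies on.

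\textbf{Why the gap matters.} The rest of the induction depends on (ii). Part (i) at the next step, $Q_1^{r+1}\in F_r$, comes from \eqref{eq:Fr4} only once $Q_1^r$ is known to be below $\Sigma_1$. Your part (iv) also needs it: $E_r\cap\Sigma_1=[z_{r+1},z_r)\times\{0\}$ holds because $\Sigma_1$ crosses the removed triangle on the two sides through $Q_1^r$, which requires $Q_1^r$ to be below $\Sigma_1$.

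\textbf{A second omission in the transition step.} Your step to $r=q^*+2$ establishes only (iii). Part (ii) there, that $Q_1^{q^*+1}$ lies below $\Sigma_1$, is not covered anywhere. Lemma \ref{le:firstFewFr}(ii) stops at $Q_1^{q^*}$, and your general step starts at $r=q^*+2$. So the same missing argument is needed there too. Parts (i) and (iv) at $r=q^*+2$ are likewise unaddressed, which matters when $n^*=q^*+2$ and the general step is never invoked.

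\textbf{What is in line with the paper.} The remaining ingredients of your outline follow the paper's proof:
\begin{itemize}
\item $z_r>1$ from $(1,0)\notin E_{r-1}$;
\item cutting $F_r$ along $\Sigma_{-r}$ through $Q_1^r$ and $Q_r^1$;
\item identifying the removed corner via $Q_{r-1}^1\in U_L^r$.
\end{itemize}
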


\begin{proof}
We prove the result by induction on $r$.
Parts (i) and (ii) of Lemma \ref{le:nextFewFr} are true for $r = q^* + 1$ by Lemma \ref{le:firstFewFr},
while parts (iii) and (iv) are true for $r = q^* + 1$ vacuously.

Let $q^* + 1 \le s < n^*$,
and suppose Lemma \ref{le:nextFewFr} is true for all $q^*+1 \le r \le s$.
This is our induction hypothesis.
For brevity, we write ${\rm IH(i)}$
for the assumption that part (i) of Lemma \ref{le:nextFewFr} is true for all $q^*+1 \le r \le s$,
and write ${\rm IH(ii)}_r,\ldots,{\rm IH(iv)}_r$ analogously.
To complete the proof we verify (i)--(iv) for $r = s+1$.

By \eqref{eq:nStar} and \eqref{eq:chiToN}, $(1,0) \in E_{n^* - 1}$.
Thus $(1,0) \notin E_r$ for any $r \le n^* - 2$.
Thus $z_s > 1$ by IH(iv) and the last statement in Lemma \ref{le:firstFewFr}.

We now consider the part of the boundary of $F_s$ formed by $\Sigma_{-(s-1)}$, call it $S$.
In the case $s = q^* + 1$,
$S$ is the ray $M_{q^*}$ by Lemma \ref{le:firstFewFr}(iii).
This ray extends upwards from $Q_1^{q^*}$,
which is situated below $\Sigma_1$ by Lemma \ref{le:firstFewFr}(ii).
In the case $s > q^* + 1$,
$S$ is the line segment from $Q_1^{s-1}$ to $Q_{s-1}^1$ by IH(iii).
By IH(ii), $Q_1^{s-1}$ lies below $\Sigma_1$,
while $Q_{s-1}^1 = \left( 0, z_{s-1} - 1 \right)$ lies above $\Sigma_1$
because $z_{s-1} > z_s > 1$ by IH(iv).
In either case $S$ intersects $\Sigma_1$
at $Q_s^0 = (z_s,0)$ by \eqref{eq:znAsIntersection}.

Notice $S = M_{s-1} \cap \overline{\Omega}_R$, 
and recall $Q_1^s \in M_{s-1}$.
Since $Q_1^s \in F_{s-1}$, by IH(i) and Lemma \ref{le:firstFewFr}(i),
we have $Q_1^s \in \Omega_R$, and so $Q_1^s \in S$.
Also $Q_s^1 = (0, z_s - 1)$ lies above $\Sigma_1$ because $z_s > 1$.

Now suppose for a contradiction that $Q_1^s$ lies on or above $\Sigma_1$.
Let $H$ be the interior of the convex hull of
$Q_1^1, Q_1^2, \ldots, Q_1^s$ and $Q_s^1$.
Since $f_R$ is affine, $f_R(H)$ is the polygon with vertices
$Q_1^0, Q_1^1, \ldots, Q_1^{s-1}$ and $Q_s^0$.
This polygon is the part of $H$ that lies below $\Sigma_1$, thus $f_R(H) \subset H$,
which is a contradiction because $\delta_R > 1$ so $f_R$ is area-expanding.
Thus $Q_1^s$ lies below $\Sigma_1$, verifying (ii) for $r = s+1$.
By IH(i), $Q_1^s \in F_{s-1}$.
Thus by \eqref{eq:Fr4}, $f_R^{-1} \left( Q_1^s \right) = Q_1^{s+1} \in F_s$,
verifying (i) for $r = s+1$.

The line $\Sigma_{-s}$ cuts $F_s$ into two pieces: $E_s$ and $F_{s+1}$.
This line passes through $Q_1^s$ and $Q_s^1$,
where, as we have just shown,
$Q_1^s$ lies below $\Sigma_1$ and belongs to $S$,
and $Q_s^1$ lies above $\Sigma_1$ and belongs to the part of the boundary of $F_s$ formed by $\Sigma_0$.
Thus, by IH(iii) and Lemma \ref{le:firstFewFr}(iii),
$F_{s+1}$ is the polygon with
vertices $Q_1^1, Q_1^2, \ldots, Q_1^s, Q_s^1$, verifying (iii) for $r = s+1$.

This construction also shows that the edge from $Q_1^s$ and $Q_s^1$ intersects $\Sigma_1$,
necessarily at $Q_{s+1}^0 = (z_{s+1},0)$ by \eqref{eq:znAsIntersection}.
So $F_{s+1} \cap \Sigma_1 = \left( 0, z_{s+1} \right) \times \{ 0 \}$,
hence $E_s \cap \Sigma_1 = \left[ z_{s+1}, z_s \right) \times \{ 0 \}$,
verifying (iv) for $r = s+1$.
\end{proof}

\begin{proof}[Proof of Proposition \ref{pr:nStar}]
In the case $n^* = q^* + 1$,
$E_{n^* - 1} \cap \Sigma_1 = \left[ z_{n^*}, \infty \right) \times \{ 0 \}$
by the last part of Lemma \ref{le:firstFewFr},
while in the case $n^* > q^* + 1$,
$E_{n^*-1} \cap \Sigma_1 = \left[ z_{n^*}, z_{n^*-1} \right)$ by Lemma \ref{le:nextFewFr}(iv),
where $z_{n^*-1} > 1$.
In either case, $E_{n^*-1} \cap \tilde{\Sigma}_1 = \left[ z_{n^*}, 1 \right]$ by \eqref{eq:In2},
and where $z_{n^*} \le 1$ by the definition of $n^*$.
This verifies part (ii) of Proposition \ref{pr:nStar}.
Furthermore, $I_n = \varnothing$ for all $2 \le n \le n^*-1$
by Lemma \ref{le:nStarGreaterThanqStar}, Lemma \ref{le:nextFewFr}(iv),
and the last part of Lemma \ref{le:firstFewFr},
verifying part (i) of Proposition \ref{pr:nStar}.

Notice \eqref{eq:Fr4} implies $F_{r+1} = f_R^{-1} \left( F_r \cap U_R^{-1} \right)$,
where $U_R^{-1}$ consists of all points below $\Sigma_1$.
From the description of $F_{n^*}$ in Lemma \ref{le:firstFewFr}(iii) in the case $n^* = q^* + 1$,
and in Lemma \ref{le:nextFewFr}(iii) in the case $n^* > q^* + 1$,
$F_{n^*} \cap U_R^{-1}$ is the polygon with vertices 
$Q_1^0, Q_1^1, \ldots, Q_1^{n^*-1}, Q_{n^*-1}^1$.
Thus $F_{n^*+1}$ 
is the polygon with vertices $Q_1^1,Q_1^2,\ldots,Q_1^{n^*},Q_{n^*}^1$,
and notice $Q_{n^*}^1$ lies on or below $\Sigma_1$ because $z_{n^*} \le 1$.
\end{proof}

{\footnotesize
\bibliographystyle{unsrt}
\bibliography{ZeroEigCharIndMapBIB}
}

\end{document}